\newcommand{\ie}{i.e.\ }
\newcommand{\Kr}{\delta^{\text{Kr}}}               % Kronecker delta 
\newcommand{\tooby}[1]{\stackrel{#1}{\longrightarrow}} %left arrow with upper label
\newcommand{\isoto}{\buildrel \sim\over\to}
\newcommand{\isofrom}{\buildrel \sim\over\leftarrow}
\newcommand{\hook}{\hookrightarrow}
\newcommand{\ZZ}{\mathbb{Z}}               % integers
\newcommand{\id}{\mathrm{id}}           % identity morphism
\DeclareMathOperator{\im}{\mathrm{im}}         % image
\DeclareMathOperator{\Hom}{\mathrm{Hom}}
\DeclareMathOperator{\Spec}{\mathrm{Spec}}         % Spec 
\newcommand{\cO}{\mathcal{O}}      % structural sheaf
\newcommand{\cL}{\mathcal{L}}       % some sheaf of rank one
\newcommand{\Gm}{\mathbb{G}_m}      % group scheme Gm
\newcommand{\PP}{\mathbb{P}}      % projective space 
\renewcommand{\AA}{\mathbb{A}}      % affine space 
\newcommand{\TB}[1]{\mathcal{T}_{#1}}     %tangent bundle 
\newcommand{\NB}[2]{\mathcal{N}_{#1}#2}     %normal bundle
\newcommand{\RS}{\Sigma}    % set of roots
\newcommand{\cl}{\Lambda}  % character lattice 
\newcommand{\tor}{\mathfrak{t}} % torsion index of the root datum 
\newcommand{\LL}{\mathbb{L}} % Lazard ring
\newcommand{\fplus}{+_F} % formal group +_F
\newcommand{\fminus}{-_F} % formal group -_F
\newcommand{\lbr}{[\hspace{-1.5pt}[}  % left bracket  
\newcommand{\rbr}{]\hspace{-1.5pt}]}  % right bracket  
\newcommand{\FGR}[3]{#1 \lbr #2 \rbr_{#3}} % formal group ring of #1=ring, #2=Z-module and #3=fgl
\newcommand{\RcF}{\FGR{R}{\cl}{F}} % the same as above, but with R, character lattice, F
\newcommand{\RMF}{\FGR{R}{M}{F}} % the same as above, but with M instead of \Lambda
\newcommand{\IF}{\mathcal{I}_F}    % augmentation ideal 
\newcommand{\sS}{S}
\newcommand{\SW}{\sS_W} % S_W
\newcommand{\SWP}[1]{\sS_{W/W_{#1}}} % S_{W/W_P}
\newcommand{\SWd}{\SW^\star} % S_W^* (fixed points)
\newcommand{\SWPd}[1]{\SWP{#1}^\star} % S_{W/W_P}^* (fixed points)
\newcommand{\qQ}{Q}
\newcommand{\QW}{\qQ_W}
\newcommand{\QWP}[1]{\qQ_{W/W_{#1}}} % S_{W/W_P}
\newcommand{\QWd}{\QW^*} % Q_W^* 
\newcommand{\QWPd}[1]{\QWP{#1}^*} % Q_{W/W_P}^* (fixed points)
\newcommand{\sproj}[2]{p_{#2/#1}}  % projection from S_{W/W_\Xi'} to S_{W/W_{\Xi}}
\newcommand{\sprojB}[1]{p_{#1}}  % projection from S_{W} to S_{W/W_{\Xi}}
\newcommand{\sdiag}[2]{d_{#2/#1}}  % diagonal map from S_{W/W_\Xi} to S_{W/W_{\Xi'}}
\newcommand{\de}{\delta}                   % delta
\newcommand{\aA}{A}          % the A operator
\newcommand{\cA}{\mathcal{A}}          % the A operator but from Q_W to Q_W/W_P etc.
\newcommand{\DcF}{\mathbf{D}}    % formal affine Demazure algebra 
\newcommand{\DcFd}{\DcF^\star}   % the dual of formal affine Demazure alg
\newcommand{\DcFP}[1]{\DcF_{#1}} % formal affine Demazure algebra w.r.t. \Xi
\newcommand{\DcFPd}[1]{\DcF_{#1}^\star} %the dual of formal affine Demazure algebra w.r.t. \Xi
\newcommand{\kp}{\kappa}             % the element (x_\alpha + x_{-\alpha})/x_\alpha x_{-\alpha} 
\newcommand{\cmS}{c_S}        %(equivariant) characteristic map c:S -> D^* 
\newcommand{\unit}{\mathbf{1}}      % the identity of the dual
\newcommand{\act}{\bullet}            % the action of Q_W on Q_W^\star
\newcommand{\rev}{\mathrm{rev}}   % reversed sequence
\DeclareMathOperator{\hh}{\mathtt{h}}             % oriented cohomology
\newcommand{\pt}{\mathrm{pt}}   		%base point
\newcommand{\Sch}{\mathrm{Var}}  %category of schemes for which the cohomology theory is defined.
\newcommand{\TSch}{\operatorname{\mathit{T}-\Sch}} %same as above, but with additional torus action
\newcommand{\GSch}[1]{\operatorname{\mathit{#1}-\Sch}} %same as above, but with additional  action of a group
\newcommand{\one}{1} % unit in cohomology ring
\newcommand{\Schub}[2]{\mathcal{X}_{#2}^{#1}} %Schubert variety
\newcommand{\BottSam}[1]{\hat{\mathcal{X}}_{#1}} %Bott-Samelson desingularization
\newcommand{\BS}[1]{\zeta_{#1}} %class of the Bott-Samelson desingularization
\newcommand{\BSP}[2]{\zeta_{#2}^{#1}} %class of the Bott-Samelson desingularization
\newcommand{\res}[1]{\mathrm{res}_{#1}} % restriction corresponding to change of groups
\newcommand{\resG}[1]{\mathrm{Res}_{#1}} % restriction between G-Schemes 
\newcommand{\cg}{c_g} %geometric equivariant characteristic map c:h_T(pt) -> h_T(G/B) 
\newcommand{\cc}{c} %(non-equivariant) characteristic map c:h_T(pt) -> h(G/B)
\newcommand{\inc}{\imath} % inclusion of fixed points 
\newcommand{\incP}[1]{\inc_{#1}} %inclusion of T-fixed points in G/P
\newcommand{\projal}[1]{\pi_{#1}} % projection map G/B -> G/P_alpha 
\newcommand{\proj}[2]{\pi_{#2/#1}} % projection map G/P' -> G/P 
\newcommand{\projP}[1]{\pi_{#1}} % projection map G/B -> G/P
\newcommand{\fproj}[2]{\rho_{#2/#1}} % projection map W/W_{P'} -> W/W_P 
\newcommand{\pairing}[3]{\langle #1,#2\rangle_{#3}} % projection map G/P' -> G/P 
\newcommand{\mainiso}{\Theta} % iso from S_W^\star h_T((G/B)^T) 
\newcommand{\mainisoP}[1]{\mainiso_{#1}} % iso from h_T((G/P)^T) to ... 
\theoremstyle{plain}
\newtheorem{theo}{Theorem}[section]
\newtheorem{prop}[theo]{Proposition}
\newtheorem{lem}[theo]{Lemma}
\newtheorem{cor}[theo]{Corollary}
\newtheorem{ax}{A}
\theoremstyle{definition}
\newtheorem{defi}[theo]{Definition}
\newtheorem{rem}[theo]{Remark}
\newtheorem{ex}[theo]{Example}
\newtheorem{ass}[theo]{Assumption}
\numberwithin{equation}{section}   % equation numbering is inside the section
\begin{document}
%
%%%%%%%%%%%%%%%%%%%%%%%%%%%%%%%%%%%

\title{Equivariant oriented cohomology \\ of flag varieties}

\author{Baptiste Calm\`es}
\author{Kirill Zainoulline}
\author{Changlong Zhong}

\address{Baptiste Calm\`es, Universit\'e d'Artois, Laboratoire de
  Math\'ematiques de Lens, France}
\email{baptiste.calmes@univ-artois.fr}

\address{Kirill Zainoulline, Department of Mathematics and Statistics,
University of Ottawa, Canada}
\email{kirill@uottawa.ca}

\address{Changlong Zhong, Department of Mathematical and Statistical Sciences,
University of Alberta, Canada}
\email{zhongusc@gmail.com}

\thanks{The first author acknowledges the support of the French Agence Nationale de la Recherche (ANR) under reference ANR-12-BL01-0005, as well as the support of the Alexander von Humboldt Foundation, through the special semester in homotopy theory, in Essen.
The second author was supported by the NSERC Discovery grant  385795-2010, NSERC DAS grant 396100-2010 and the Early Researcher Award (Ontario). 
} 

\subjclass[2010]{14F43, 14M15, 19L41, 55N22, 57T15, 57R85}

\maketitle

%\begin{abstract} 
%\end{abstract}

\setcounter{tocdepth}{3}
\tableofcontents

%%%%%%%%%%%%%%%%%%%%%%%%%%
\section{Introduction}

Given an equivariant oriented cohomology theory $\hh$ over a base field $k$, a split reductive group $G$ over $k$, a maximal torus $T$ in $G$ and a parabolic subgroup $P$ containing $T$, we explain how, as a ring, $\hh_T(G/P)$ can naturally be identified with an algebraic object $\DcFPd{\Xi}$ introduced in \cite{CZZ2}, and which is the dual of a coalgebra defined using exclusively the root datum of $(G,T)$, a set of simple roots $\Xi$ defining $P$ and the formal group law $F$ of $\hh$. In \cite{CZZ2}, we studied the properties of this object and of some related operators by algebraic and combinatorial methods, without any reference to geometry. The present paper is to be considered as a companion paper to \cite{CZZ2}, that justifies the definitions of $\DcFPd{\Xi}$ and of other related algebraic objects or operators by explaining how to match them to equivariant cohomology rings endowed with operators constructed using push-forwards and pull-backs along geometric morphisms. 

This kind of algebraic description was first introduced by Demazure in \cite{Dem73,Dem74} for (non-equivariant) Chow groups and K-theory, and then extended to the respective $T$-equivariant settings by Kostant and Kumar \cite{KK86,KK90}. While the non-equivariant case can easily be recovered out of the equivariant one by base change, the big advantage of the equivariant setting is that the pull-back to $T$-fixed points injects $\hh_T(G/P)$ in a very simple ring: a direct product of a finite number of copies of $\hh_T(\pt)$, where $\pt$ is $\Spec(k)$. This important property was already apparent in Atiyah-Bott \cite{AB84} in the topological context of singular cohomology of complex varieties. With this observation, the goal of so-called ``Schubert calculus'' becomes the identification of the image of this injection, and a good description of classes of geometric interest in this image, \ie Schubert varieties. Some operators are also important and we want to describe them too: if $P'$ is a parabolic subgroup contained in $P$, we want to understand pull-back and push-forward maps between $\hh_T(G/P')$ and $\hh_T(G/Q)$, associated to the natural projection $G/P' \to G/P$.
\medskip

Our main results are:
\begin{enumerate}[leftmargin=*]
\item \label{identification_item} Theorem \ref{identificationP_theo}, identifying $\DcFPd{\Xi}$ with $\hh_T(G/P)$, within the fixed points ring $\SWPd{\Xi}$, a direct product of copies of $\hh_T(\pt)$; 
\item Theorem \ref{image_theo}, giving an intrinsic characterization of the image in the Borel case;
\item Diagram \eqref{pushcube_eq}, describing the push-forward operator mentioned above;  
\item Theorem \ref{Wfixed_theo}, identifying the image of the injective pull-back map $\hh_T(G/P) \to \hh_T(G/B)$ ($B$ is a Borel subgroup) as the subring $\hh_T(G/B)^{W_\Xi}$ of fixed elements under the parabolic Weyl group $W_\Xi$ corresponding to $P$;
\item Lemma \ref{BSclasses_lem}, describing the algebraic elements corresponding to Bott-Samelson classes, \ie fundamental classes of desingularized Schubert varieties.
\item Theorem \ref{pairing_theo}, proving that the pairing defined by product and push-forward to $\hh_T(\pt)$ is non-degenerate;
\item Theorem \ref{BorelPres_theo}, providing a Borel style presentation $\hh_T(\pt)\otimes_{\hh_T(\pt)^W} \hh_T(\pt) \simeq \hh_T(G/B)$ (under some conditions). 
\end{enumerate}

We do not prove these results in that order, though. 
First, we state the properties that we use from equivariant oriented cohomology theories, in section \ref{equivariant_sec}. 
Then, in section \ref{multgroup_sec}, we describe $\hh_T(\pt)$ as the formal group ring $S=\RMF$ of \cite[Def.~2.4]{CPZ}. 
In section \ref{torusP1_sec}, we compute the case of $\hh_T(\PP^1)$ when the action of $T$ on $\PP^1=\AA^2/\Gm$ is induced by a linear action of $T$ on $\AA^2$. 
It enables us to identify the pull-back of Bott-Samelson classes to $T$-fixed points in the Borel case, in section \ref{BS_sec}. By localization, some of these classes generate $\hh_T(G/B)$ and this lets us prove the Borel case of \eqref{identification_item}. The parabolic cases are then obtained in the remaining sections, as well as the Borel style presentation. In the last section, we explain how equivariant groups under subgroups of $T$ (and in particular the trivial group which gives the non-equivariant case) can be recovered out of the equivariant one.  
\medskip

We would like to point out several places where the case of an oriented cohomology theory with an arbitrary formal group law is significantly more complicated than the two classical cases of the additive law (Chow groups) and the multiplicative one ($K$-theory). First of all, in these two classical cases, the formal group law is polynomial and furthermore given by very simple polynomials; it is easy to conceive that the computations increase in complexity with other formal group laws given by powers series with an infinite number of nonzero coefficients. Secondly, in both of these classical cases, the (non-equivariant) cohomology ring of a point is $\ZZ$, which is a regular ring, while in general, this base ring can be arbitrary; in the work of Kostant and Kumar, the fraction field of the $T$-equivariant cohomology ring of the point is used as a crucial tool, but we are forced to invert less elements and use a more subtle localization process, for fear of killing everything in some cases (see the definition of $Q$ from $S$ in section \ref{algcomb_sec}). The positive aspect of this extra difficulty is that it forces us to distinguish what really comes from geometry from artifacts of particular cohomology theories. Thirdly, as Bressler and Evens have shown \cite{BE90}, additive and multiplicative formal group laws are the only formal group laws for which the elements $X_{I_w}$ and $Y_{I_w}$ (see after Def. \ref{Xalpha_defi}) are independent of the choice of a reduced decomposition $I_w$ of $w$. Geometrically, this translates as the fact that for Chow groups or $K$-theory, the class of a Bott-Samelson desingularization corresponding to the reduced decomposition $I_w$ only depends on $w$, and actually is the class of the (possibly singular) Schubert variety corresponding to $w$ in Chow groups and the class of its structural sheaf in $K$-theory. For an arbitrary oriented cohomology theory, for example for algebraic cobordism, this is simply not true: different desingularizations of the same Schubert variety give different classes. This combinatorial/geometric independence is used as a key ingredient in the literature on Chow groups and $K$-theory. For example, see \cite[Thm. 1]{Dem73} and how it is used in \cite[\S 4]{Dem74}; see also \cite[Prop.~4.2]{KK86} and its corollary Prop.~4.3. We conjecture that it is the discovery by Bressler and Evens that it does not hold in general that deterred further development of the Demazure-Kostant-Kumar line of ideas (see the first paragraph on p. 550 in \cite{KK90}) until \cite{CPZ}, in which this non-independence is overcome. The situation is now approximately as follows: in the two classical cases, key objects, such as the cohomology of $G/B$ or the algebra $\DcF$, are naturally equipped with a \emph{canonical} basis indexed by elements of the Weyl group $W$, while in general, there are many possible bases, corresponding to different choices of reduced decompositions for every element of $W$. It increases the complexity of the combinatorics involved, but it is still manageable. 
\medskip

Let us mention some of the literature on cohomology theories that go beyond Chow groups or K-theory. In \cite{HHH}, Harada, Henriques and Holm prove the injectivity of the pull-back to fixed points map and the characterization of its image in the topological context of generalized cohomology theories, under an assumption that certain characteristic classes are prime to each other. Our Theorem \ref{image_theo} gives the precise cases when this happens (and, as all of our statements and proofs, it only relies on algebro-geometric methods, with no input from topology).

In \cite[Thm. 5.1]{KiKr13}, a Borel style presentation of equivariant algebraic cobordism is obtained after inverting the torsion index. The improvement of our Theorem~\ref{BorelPres_theo} is that it applies to any oriented cohomology theory, and that, even over a field of characteristic zero, over which algebraic cobordism is the universal oriented cohomology theory, it gives a finer result than what one would get by specializing from cobordism, as one can see in the case of $K$-theory: the Borel style presentation will always hold in the simply connected case, without inverting the torsion index. 

%%%%%%%%%%%%%%%%%%

\section{Equivariant oriented cohomology theory} \label{equivariant_sec}

In the present section we recall the notion of an equivariant algebraic oriented cohomology theory, essentially by compiling definitions and results of \cite{Des09}, \cite{EG}, \cite{HM}, \cite{KiKr13}, \cite{Kr12}, \cite{LM}, \cite{Pa} and \cite{To}. We present it here in a way convenient for future reference.

\medskip

In this paper, $k$ is always a fixed base field, and $\pt$ denotes $\Spec(k)$. By a variety we mean a reduced separated scheme of finite type over $k$. Let $G$ be a smooth linear algebraic group over $k$, abbreviated as \emph{algebraic group}, and let $\GSch{G}$ be the category of smooth quasi-projective varieties over $k$ endowed with an action of $G$, and with morphisms respecting this action (i.e. $G$-equivariant morphisms). The tangent sheaf $\TB{X}$ of any $X \in \GSch{G}$ is locally free and has a natural $G$-equivariant structure. The same holds for the (co)normal sheaf of any equivariant regular embedding of a closed subscheme. 

\medskip

An \emph{equivariant oriented cohomology theory} over $k$ is an additive contravariant functor $\hh_G$ from the category $\GSch{G}$ to the category of commutative rings with unit for any algebraic group $G$ (for an equivariant morphism $f$, the map $\hh_G(f)$ is denoted by $f^*$ and is called \emph{pull-back}) together with

\begin{itemize}
\item 
a morphism $f_*\colon \hh_G(X) \to \hh_G(Y)$  of $\hh_G(Y)$-modules (called \emph{push-forward}) for any projective morphism $f\colon X \to Y$ in $\GSch{G}$ (here $\hh_G(X)$ is an $\hh_G(Y)$-module through $f^*$)

\item 
a natural transformation of functors $\res{\phi}\colon \hh_{H} \to \hh_{G}\,\circ\, \resG{\phi}$ (called \emph{restriction}) for any
morphism of algebraic groups $\phi\colon G \to H$  (here $\resG{\phi}\colon \GSch{H} \to \GSch{G}$ simply restricts the action of $H$ to an action of $G$ through $\phi$)

\item 
a natural transformation of functors $c^{G}\colon K_G \to \tilde\hh_G$ (called the \emph{total equivariant characteristic class}),
where $K_G(X)$ is the $K$-group of $G$-equivariant locally free
sheaves over $X$ and $\tilde\hh_G(X)$ is the multiplicative group of
the polynomial ring $\hh_G(X)[t]$ (the coefficient at $t^i$ is called
the $i$-th equivariant characteristic class in the theory $\hh$ and is denoted by $c_i^{G}$)
\end{itemize}

that satisfy the following properties 

\begin{ax}[Compatibility for push-forwards]
The push-forwards respect composition and commute with pull-backs for transversal squares (a transversal square is a fiber product diagram with a nullity condition on $\mathrm{Tor}$-sheaves, stated in \cite[Def. 1.1.1]{LM}; in particular, this condition holds for any fiber product with a flat map). 
\end{ax}

\begin{ax}[Compatibility for restriction]
The restriction respects composition of morphisms of groups and commutes with push-forwards.
\end{ax}

\begin{ax}[Localization] \label{loc_ax}
For any smooth closed subvariety $i\colon Z \to X$ in $\GSch{G}$ with open complement $u\colon U \hook X$, the sequence
\[
\hh_G(Z) \tooby{i_*} \hh_G(X) \tooby{u^*} \hh_G(U) \to 0
\]
is exact. 
\end{ax}

\begin{ax}[Homotopy Invariance] \label{hominv_ax}
Let $p\colon X\times \mathbb{A}^n\to X$ be a $G$-equivariant
projection with $G$ acting linearly on $\mathbb{A}^n$. Then the induced pull-back $\hh_G(X)\to
\hh_G(X\times\mathbb{A}^n)$ is an isomorphism.
\end{ax}

\begin{ax}[Normalization] \label{ChernOne_ax} 
For any regular embedding $i\colon D \subset X$ of codimension $1$ in $\GSch{G}$ we have $c_1^{G}(\cO(D))=i_*(\one)$ in $\hh_G(X)$, where $\cO(D)$ is the line bundle dual to the kernel of the map of $G$-equivariant sheaves $\cO \to \cO_D$.
\end{ax}

\begin{ax}[Torsors] \label{torsor_ax}
Let $p\colon X \to Y$ be in $\GSch{G}$ and let $H$ be a closed normal
subgroup of $G$ acting trivially on $Y$ 
such that $p\colon X\to Y$ is a $H$-torsor. Consider the quotient map
$\imath\colon G\to G/H$. Then the composite $p^* \circ
\res{\imath}\colon \hh_{G/H}(Y) \to \hh_{G}(X)$ is an isomorphism.

In particular, if $H=G$ 
we obtain an isomorphism $\hh_{\{1\}}(Y)\simeq \hh_G(X)$ for a $G$-torsor $X$
over $Y$.
\end{ax}

\begin{ax} 
If $G=\{1\}$ is trivial, then $\hh_{\{1\}}=\hh$ defines an algebraic oriented cohomology in the sense of \cite[Def. 1.1.2]{LM}  (except that $\hh$ takes values in rings, not in graded rings) with push-forwards and characteristic classes being as in \cite{LM}.
\end{ax}

\begin{ax}[Self-intersection formula]\label{ChernTan_ax} 
Let $i:Y \subset X$ be a regular embedding of codimension $d$ in $\GSch{G}$. Then the normal bundle to $Y$ in $X$, denoted by $\NB{Y/X}$ is naturally $G$-equivariant and there is an equality $i^*i_*(\one)=c_d^G(\NB{Y/X})$ in $\hh_G(Y)$.
\end{ax}

\begin{ax}[Quillen's formula]\label{FGL_ax}
If $\cL_1$ and $\cL_2$ are locally free sheaves of rank one, then 
\[
c_1(\cL_1 \otimes \cL_2)=c_1(\cL_1)\fplus c_1(\cL_2),
\] 
where $F$ is the formal group law of $\hh$ (here $G=\{1\}$).
\end{ax}

For any $X \in \GSch{G}$ consider the $\gamma$-filtration on
$\hh_G(X)$, whose $i$-th term $\gamma^i \hh_G(X)$ is the ideal of
$\hh_G(X)$ generated by products of equivariant characteristic classes of total degree at least~$i$. 
In particular, a $G$-equivariant locally free sheaf of rank $n$ over $\pt$ is the same thing as an $n$-dimensional $k$-linear representation of $G$, so $\gamma^i\hh_G(\pt)$ is generated by Chern classes of such representations. This can lead to concrete computations when the representations of $G$ are well described. 

We introduce the following important notion

\begin{defi} \label{complete_defi}
An equivariant oriented algebraic cohomology theory is called
\emph{Chern-complete over the point} for $G$,  if the ring
$\hh_G(\pt)$ is separated and complete with respect to the $\gamma$-filtration.
\end{defi}

\begin{rem}
Assume that the ring $\hh_G(\pt)$ is separated for all $G$, and let
$\hh_G(\pt)^\wedge$ be its completion with respect to the
$\gamma$-filtration. We can Chern-complete the equivariant cohomology
theory by tensoring with $-\otimes_{\hh_G(\pt)}\hh_G(\pt)^\wedge$. In this way, we obtain a completed version of the cohomology theory, still satisfying the axioms. Note that this completion has no effect on the non-equivariant groups, since in $\hh(\pt)$, the Chern classes are automatically nilpotent by \cite[Lemma 1.1.3]{LM}. 
\end{rem}

Here are three well-known examples of equivariant oriented cohomology theories.

\begin{ex} \label{Chow_ex}
The equivariant Chow ring functor $\hh_G=\mathrm{CH}_G$ was constructed by Edidin and Graham in \cite{EG}, using an inverse limit process of Totaro \cite{To}. In this case the formal group law is the additive one $F(x,y)=x+y$, the base ring $\mathrm{CH}(\pt)$ is $\ZZ$, and the theory is Chern-complete over the point for any group $G$ by construction.
\end{ex}

\begin{ex} \label{K0_ex}
Equivariant algebraic $K$-theory and, in particular, $K_0$ was constructed by Thomason \cite{Th} (see also \cite{Me} for a good survey). The formal group law is multiplicative $F(x,y)=x+y-xy$, the base ring $K_0(\pt)$ is $\ZZ$, and the theory is \emph{not} Chern complete: for example, $(K_0)_{\Gm}(\pt)\simeq \ZZ[t,t^{-1}]$ with the $\gamma^i$ generated by $(1-t)^i$. Observe that $(K_0)_G(\pt)$ consists of classes of $k$-linear finite dimensional representations of $G$.
\end{ex}

\begin{ex}[Algebraic cobordism] \label{Cobord_ex}
Equivariant algebraic cobordism was defined by Deshpande \cite{Des09}, Malg\'on-L\'opez and Heller \cite{HM} and Krishna \cite{Kr12}. The formal group law is the universal one over $\Omega(\pt)=\LL$ the Lazard ring. The equivariant theory is Chern complete over the point for any group $G$ by construction. 
\end{ex}

By Totaro's process one can construct many examples of equivariant theories, such as equivariant connective K-theory, equivariant Morava K-theories, etc. Moreover, in this way one automatically obtains Chern-complete theories.

\section{Torus-equivariant cohomology of a point} \label{multgroup_sec}

In the present section we show that the completed equivariant oriented cohomology ring
of a point $\hh_T(\pt)$, where $T$ is a split torus, can be identified with the formal group
algebra $\RMF$ of the respective group of characters $M$ (see
Theorem~\ref{ShT_theo}).

\medskip

Let $M$ be a finitely generated free abelian group.
Let $T$ be the Cartier dual of $M$, so $M$ is the group of
characters of $T$.
Let $X$ be a smooth variety over $k$ endowed with a trivial $T$-action. 
Consider the pull-back $p^*\colon\hh_T(\pt) \to \hh_T(X)$ induced by the
structure map.
Let $\gamma_\pt^i\hh_T(X)$ denote the ideal in $\hh_T(X)$ generated
by elements from the image of $\gamma^i\hh_T(\pt)$ under the
pull-back. 
Since any representation of $T$ decomposes as a direct sum of one
dimensional representations, $\gamma^i\hh_T(\pt)$ is generated by 
products of first characteristic classes $c_1^{T}(L_\lambda)$,
$\lambda \in M$. Since characteristic classes commute with pull-backs,
$\gamma_\pt^i\hh_T(X)$ is also generated by products of first
characteristic classes (of pull-backs $p^*L_\lambda$).

\medskip

Let $F$ be a one-dimensional commutative formal group law over a ring $R$. We often write $x+_F y$ (formal addition) for the power series $F(x,y)$ defining $F$. 
Following \cite[\S2]{CPZ} consider 
the formal group algebra $\RMF$. It is an $R$-algebra together with an augmentation map $\RMF \to R$ with kernel denoted by $\IF$, and it is complete with respect to the $\IF$-adic topology. 
Thus \[
\RMF = \varprojlim_i \RMF/\IF^i,\] and it is topologically generated by elements of the form $x_\lambda$, $\lambda \in M$, which satisfy
$x_{\lambda+\mu} = x_\lambda+_F x_\mu$.
By definition  (see \cite[2.8]{CPZ}) the algebra $\RMF$ is universal
among $R$-algebras with an augmentation ideal $I$ and a morphism of
groups $M \to (I,\fplus)$ that are complete with respect to the
$I$-adic topology. The choice of a basis of $M$ defines an
isomorphism \[
\RMF \simeq R\lbr x_1,\ldots,x_n \rbr,\] where $n$ is the rank of $M$.

\medskip

Set $R=\hh(X)$. Then $\hh_T(X)$ is an $R$-algebra together with an
augmentation map $\hh_T(X)\to R$ via the restrictions induced by
$\{1\}\to T\to \{1\}$. The assignment $\lambda \in M \mapsto c_1^{T}(L_\lambda)$
induces a group homomorphism $M\to (I,+_F)$, where $I$ is the
augmentation ideal. Therefore, by the universal property of $\RMF$,
there is a morphism of $R$-algebras \[
\phi\colon \RMF/\IF^i \to \hh_T(X)/\gamma_\pt^i \hh_T(X).\]
We claim that

\begin{lem}
The morphism $\phi$ is an isomorphism.
\end{lem}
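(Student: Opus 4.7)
The plan is to exhibit both $\RMF/\IF^i$ and $\hh_T(X)/\gamma_\pt^i\hh_T(X)$ as the same explicit truncated polynomial ring via a Totaro-style approximation of $BT$. Choosing a $\ZZ$-basis $\lambda_1,\ldots,\lambda_n$ of $M$ identifies $\RMF$ with $R\lbr x_1,\ldots,x_n\rbr$ (where $x_j=x_{\lambda_j}$) and $\IF$ with $(x_1,\ldots,x_n)$, so that $\RMF/\IF^i\simeq R[x_1,\ldots,x_n]/(x_1,\ldots,x_n)^i$. On the cohomology side, Quillen's formula $c_1^T(L_{\mu+\nu})=c_1^T(L_\mu)\fplus c_1^T(L_\nu)$ shows that $c_1^T(L_\mu)$ lies in the ideal generated by the $c_1^T(L_{\lambda_j})$ for any weight $\mu$; combined with the fact that every $T$-representation decomposes as a direct sum of one-dimensional ones, this shows that $\gamma_\pt^i\hh_T(X)$ equals the $i$-th power of the ideal $(c_1^T(L_{\lambda_1}),\ldots,c_1^T(L_{\lambda_n}))$.

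Fix $N\geq i$ and consider the $T$-representation $V=\bigoplus_j L_{\lambda_j}^{\oplus N}$ together with the $T$-stable open subset $U=\prod_j(L_{\lambda_j}^{\oplus N}\setminus 0)$. The complement $V\setminus U$ is the union $\bigcup_j V_j$, where $V_j$ is the codimension-$N$ linear subspace obtained by zeroing the $j$-th block. Homotopy Invariance gives $\hh_T(X)\simeq \hh_T(X\times V)$; Localization gives a surjection $\hh_T(X\times V)\twoheadrightarrow \hh_T(X\times U)$; since $T$ acts freely on $U$ with $U/T\simeq (\PP^{N-1})^n$, the Torsor axiom identifies $\hh_T(X\times U)\simeq \hh(X\times U/T)$, and the Projective Bundle Formula identifies the latter with $R[t_1,\ldots,t_n]/(t_j^N:j)$, where $t_j=c_1(\cO(1))$ on the $j$-th factor. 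Let $\pi\colon\hh_T(X)\twoheadrightarrow R[t_1,\ldots,t_n]/(t_j^N:j)$ denote the composite.

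To bound $\ker\pi$, I stratify $V\setminus U$ by the smooth locally closed pieces $Z_k=V_k\setminus(V_1\cup\cdots\cup V_{k-1})$, each of codimension $N$ in $V$, and iteratively apply Localization to identify $\ker\pi$ with the sum of the images in $\hh_T(X\times V)\simeq\hh_T(X)$ of the push-forwards along the closed embeddings $X\times Z_k\hookrightarrow X\times V^{(k-1)}$, where $V^{(k-1)}=V\setminus(V_1\cup\cdots\cup V_{k-1})$. Compatibility of push-forwards with pullback along the open embeddings $Z_k\hookrightarrow V_k$, which form transversal squares with $V_k\hookrightarrow V$, together with surjectivity of $\hh_T(X\times V_k)\twoheadrightarrow\hh_T(X\times Z_k)$ (by Localization), reduces each such image to the restriction of the image of the push-forward $\hh_T(X\times V_k)\to \hh_T(X\times V)$. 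The projection formula together with Self-Intersection applied to the identity $c_N^T(L_{\lambda_k}^{\oplus N})=c_1^T(L_{\lambda_k})^N$ identifies this latter image as the principal ideal $(c_1^T(L_{\lambda_k})^N)$ of $\hh_T(X)$. Hence $\ker\pi\subset(c_1^T(L_{\lambda_j})^N:j)\subset \gamma_\pt^N\hh_T(X)\subset\gamma_\pt^i\hh_T(X)$.

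Since $\pi$ is surjective, maps $\gamma_\pt^i\hh_T(X)$ onto its image (characteristic classes are natural under pullback), and has kernel contained in $\gamma_\pt^i$, it induces an isomorphism between $\hh_T(X)/\gamma_\pt^i$ and $R[t_j]/(t_j^N:j)$ modulo the image of $\gamma_\pt^i$. Under the Torsor identification, $L_{\lambda_j}|_U$ descends to $\cO(\pm 1)$ on the $j$-th $\PP^{N-1}$, so $c_1^T(L_{\lambda_j})\mapsto \pm t_j+O(t_j^2)$, which generates the same principal ideal as $t_j$; whence the image of $\gamma_\pt^i$ equals $(t_1,\ldots,t_n)^i$. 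For $N\geq i$ this ideal already contains $(t_j^N:j)$, so the final quotient is $R[t_1,\ldots,t_n]/(t_1,\ldots,t_n)^i$, matching $\RMF/\IF^i$ under $x_j\leftrightarrow t_j$. Since $\phi$ sends $x_j$ to $c_1^T(L_{\lambda_j})$, which corresponds to $t_j$ up to a unit series, $\phi$ is an $R$-algebra isomorphism. The main obstacle is the control of $\ker\pi$ in the third paragraph: the non-closed strata $Z_k$ cannot be treated by direct Self-Intersection, so one must rely on the transversal compatibility of push-forwards with open-embedding pullbacks to reduce to the closed strata $V_k$.
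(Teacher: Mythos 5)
Your proof is correct and takes a genuinely different route from the paper's. The paper proceeds by induction on the rank $n$ of $M$: at each step it splits off one $\Gm$-factor, applies localization/homotopy/torsor to a single $\PP^{i-1}$-bundle, and absorbs the resulting $\cO(1)$-class into an enlarged coefficient ring $R'=R[y]/y^i$, then concludes by a purely algebraic check on truncated power series. You instead use a global Totaro-style approximation: a single open subset $U \subset V=\bigoplus_j L_{\lambda_j}^{\oplus N}$ on which $T$ acts freely with quotient $(\PP^{N-1})^n$, producing a surjection $\pi:\hh_T(X)\to R[t_1,\dots,t_n]/(t_j^N)$ in one shot. The price for treating all $n$ variables at once is that the complement $V\setminus U$ is no longer smooth, so you stratify it by the locally closed pieces $Z_k$ and use the transversality of the open squares $Z_k\hookrightarrow V_k$ over $V^{(k-1)}\hookrightarrow V$ to reduce each stratum's contribution to a self-intersection computation on the closed $V_k$; this correctly pins down $\ker\pi=(c_1^T(L_{\lambda_j})^N)_j\subseteq\gamma^i_{\pt}$ for $N\geq i$. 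Conceptually, your argument is closer to the usual Totaro/Edidin--Graham definition of equivariant cohomology and keeps the base ring $R=\hh(X)$ fixed throughout, whereas the paper's induction trades the stratification for a changing base ring; both are legitimate and roughly comparable in length. Two small remarks: your $O(t_j^2)$ hedge is unnecessary (under the conventions of the paper $L_{\lambda_j}$ descends to $\cO(1)$ on the $j$-th factor, so $c_1^T(L_{\lambda_j})\mapsto t_j$ exactly, as the paper also states in its own proof), and at the final step it is worth spelling out that the induced map $R[x]/(x)^i\to R[t]/(t)^i$, $x_j\mapsto u_jt_j$ with $u_j$ a unit, is an isomorphism because it is filtered and induces the identity on associated graded pieces.
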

\begin{proof}
We proceed by induction on the rank $n$ of $M$. 

For $n=0$, we have $T=\{1\}$, $R=\hh_T(X)$, $\IF^i=\gamma_{\pt}^i\hh_T(X)=\{0\}$ and the map $\phi$ turns
into an identity on $R$.

For rank $n>0$ we choose a basis $\{\lambda_1,\ldots,\lambda_n\}$ of
$M$. Let $\{L_1,\ldots,L_n\}$ be the respective one-dimensional
representations of $T$.  
This gives isomorphisms $M \simeq \ZZ^n$ and $T \simeq \Gm^n$ and
$\Gm^n$ acts on $L_i$ by multiplication by the $i$-th coordinate. 
Let $\Gm^n$ act on $\AA^i$ by multiplication by the last
coordinate. Consider the localization sequence (A\ref{loc_ax})
\[
\hh_{\Gm^n}(X) \longrightarrow \hh_{\Gm^n}(X \times \AA^i) \longrightarrow \hh_{\Gm^n}(X \times (\AA^i \setminus \{0\})) \longrightarrow 0.
\]
After identifying
\[
\hh_{\Gm^n}(X)\isoto \hh_{\Gm^n}(X \times
\AA^i)\text{ and }\hh_{\Gm^{n-1}}(X \times \PP^{i-1}) \isoto
\hh_{\Gm^{n}}(X \times (\AA^i \setminus \{0\}))
\]
 via (A\ref{ChernTan_ax}) and
(A\ref{torsor_ax}), 
we obtain an exact sequence
\[
\hh_{\Gm^n}(X) \tooby{c_1(L_n)^i} \hh_{\Gm^n}(X) \longrightarrow \hh_{\Gm^{n-1}}(X \times \PP^{i-1}) \longrightarrow 0.
\] 
where the first
map is obtained by applying self-intersection
(A\ref{ChernOne_ax}) and homotopy invariance (A\ref{hominv_ax})
properties. 

By definition, all these maps are $R$-linear, and the action of $\Gm^{n-1}$ on $X \times \PP^{i-1}$ is the trivial one.
Since the last map is given by pull-back maps and restrictions
(although not all in the same direction), and since equivariant characteristic classes commute with these, one checks that it sends $c_1(L_i)$ to $c_1(L_i)$ for any $i\leq n-1$ and $c_1(L_n)$ to $c_1(\cO(1))$; this last case holds because $\cO(1)$ on $\PP^{i-1}$ goes (by restriction and pull-back) to the equivariant line bundle on $\AA^i\setminus\{0\}$ with trivial underlying line bundle, but where $\Gm^n$ acts by $\lambda_n$ on fibers.

By the projective bundle theorem, we have $R':=\hh(X \times \PP^{i-1})\simeq R[y]/y^i$ with $c_1(\cO(1))=y$. 
By induction, we obtain for any $i$ an isomorphism \[
\hh_{\Gm^{n-1}}(X \times \mathbb{P}^{i-1})/\gamma_\pt^i \simeq \FGR{R'}{M'}{F}/(\IF')^i,\] where $M'=\ZZ^{n-1}$ and $\IF'$ is the augmentation ideal of $\FGR{R'}{M'}{F}$. Using the isomorphisms $\RMF \simeq R\lbr x_1,\ldots,x_n \rbr$ and $\FGR{R'}{M'}{F}\simeq R'\lbr x_1,\ldots,x_{n-1}\rbr$ induced by the basis of $M$, we are reduced to checking that  
\[
\begin{array}{ccc}
R\lbr x_1,\ldots,x_n\rbr/\IF^i & \longrightarrow & (R[y]/y^i)\lbr x_1,\ldots,x_{n-1}\rbr/\mathcal{J} \\
x_i & \longmapsto & \begin{cases} x_i & \text{if $i \leq n-1$} \\ y & \text{if $i=n$.} \end{cases}
\end{array}
\]
is an isomorphism, when $\mathcal{J}=(\IF')^i+y\cdot
(\IF')^{i-1}+\cdots +y^i$. The latter then follows by definition.
\end{proof}

\begin{rem}
Similar statements can be found in \cite[3.2.1]{HM} or \cite[6.7]{Kr12}, but we gave a full proof for the sake of completeness.
\end{rem}

We obtain a natural map of $R$-algebras 
\[
\hh_T(\pt) \to \varprojlim_i \hh_T(\pt)/\gamma^i \hh_T(\pt) \simeq \varprojlim_i \RMF/\IF^i = \RMF
\]
and, therefore, by the lemma

\begin{theo} \label{ShT_theo}
If $\hh$ is (separated and) Chern-complete over the point for $T$,
then the natural map $\hh_T(\pt) \to \RMF$ is an isomorphism. It sends
the characteristic class $c_1^T(L_\lambda)\in \hh_T(\pt)$ to $x_\lambda\in\RMF$.
\end{theo}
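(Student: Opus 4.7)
The plan is to derive this theorem as a direct consequence of the preceding Lemma applied with $X=\pt$. First, I would note that when $X=\pt$, the pullback along the structure map is the identity, so $\gamma_\pt^i\hh_T(\pt)=\gamma^i\hh_T(\pt)$. The Lemma then supplies, for every $i\geq 1$, an isomorphism
\[
\phi_i \colon \RMF/\IF^i \isoto \hh_T(\pt)/\gamma^i\hh_T(\pt).
\]
Since $\phi_i$ is constructed via the universal property of $\RMF/\IF^i$ applied to the pair $(\hh_T(\pt)/\gamma^i\hh_T(\pt), \gamma^i\hh_T(\pt)/\gamma^{2i}\cdots)$ with the class map $\lambda\mapsto c_1^T(L_\lambda)$, the family $\{\phi_i\}$ is compatible with the natural projections as $i$ varies.

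Taking the inverse limit and using that $\RMF$ is by definition complete for the $\IF$-adic topology, one obtains an isomorphism
\[
\RMF \;=\; \varprojlim_i \RMF/\IF^i \;\isoto\; \varprojlim_i \hh_T(\pt)/\gamma^i\hh_T(\pt).
\]
The Chern-completeness hypothesis on $\hh$ for $T$ asserts precisely that the canonical map $\hh_T(\pt) \to \varprojlim_i \hh_T(\pt)/\gamma^i\hh_T(\pt)$ is an isomorphism. Composing the inverse of the displayed isomorphism with this identification yields the natural map $\hh_T(\pt)\to\RMF$ appearing in the statement, and shows it is an isomorphism.

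It remains to identify the image of $c_1^T(L_\lambda)$ with $x_\lambda$. This is built into the construction: the map $\RMF \to \hh_T(\pt)$ underlying the inverse isomorphism is obtained from the universal property of $\RMF$ applied to the group homomorphism $\lambda \mapsto c_1^T(L_\lambda)$ from $M$ to the augmentation ideal of $\hh_T(\pt)$, where the group structure on the target is $+_F$ by Quillen's formula (A\ref{FGL_ax}). By universality this map sends $x_\lambda$ to $c_1^T(L_\lambda)$, so its inverse sends $c_1^T(L_\lambda)$ to $x_\lambda$, as required.

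Essentially no obstacle remains, as all the real work has been carried out in the Lemma; the only point requiring care is the verification that the isomorphisms $\phi_i$ are compatible with varying $i$, but this is automatic from the universal characterization used to construct them.
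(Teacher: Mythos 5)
Your proof is correct and follows essentially the same route as the paper: apply the preceding Lemma with $X=\pt$, pass to the inverse limit over $i$, and invoke Chern-completeness to identify $\hh_T(\pt)$ with $\varprojlim_i \hh_T(\pt)/\gamma^i$. The paper states this more tersely, but the compatibility of the $\phi_i$ with the projection maps (which you spell out) and the identification $c_1^T(L_\lambda)\mapsto x_\lambda$ (immediate since each $\phi_i$ sends $x_\lambda\mapsto c_1^T(L_\lambda)$ by construction) are exactly the implicit steps.
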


\section{Equivariant cohomology of $\PP^1$} \label{torusP1_sec}

In the present section we compute equivariant cohomology
$\hh_T(\PP(V_1\oplus V_2))$ of a
projective line, where a split torus $T$ acts on one-dimensional representations
$V_1$ and $V_2$ by means of characters $\lambda_1$ and $\lambda_2$. 

\begin{ass}
For the rest of the paper we assume that the equivariant cohomology of the point $\hh_T(\pt)$ is (separated and) complete for the $\gamma$-filtration in the sense of Definition \ref{complete_defi}. 
\end{ass}

Let $X$ be a smooth $T$-variety.
By section \ref{multgroup_sec}, the ring
$\hh_T(X)$ can be considered as a ring over $\sS:=\RMF$ via the
identification $\sS \simeq \hh_T(\pt)$ of Theorem \ref{ShT_theo} and
the pull-back map $\hh_T(\pt) \to \hh_T(X)$. By convention, we'll use
the same notation for an element $u$ of $\sS$ and the element $u\cdot
\one \in \hh_T(X)$, where $\one$ is the unit of $\hh_T(X)$. Thus, for
example, $x_\lambda=c_1^T(L_\lambda)$ in $\hh_T(X)$.

\medskip

Given a morphism $f:X \to Y$ in $\TSch$, the pull-back map $f^*$ is a
morphism of rings over $\sS$ and the push-forward map $f_*$ (when it
exists) is a morphism of $\sS$-modules by the projection formula. 

\begin{rem}
Note
that we are not claiming that $\sS$ injects in $\hh_T(X)$ for all $X
\in \TSch$; it will nevertheless hold when $X$ has a $k$-point that is
fixed by $T$, as most of the schemes considered in this paper have.
\end{rem}

\begin{lem} \label{ipoint_lem}
Let $p: X \to Y$ be a morphism in $\GSch{G}$, with a section $s : Y \to X$. Then for any $u \in \hh_G(Y)$, one has 
\begin{enumerate} 
\item \label{cot_item} $s^*s_*(u\cdot v) = u \cdot s^*s_*(v)$ if $s$ is projective.
\item \label{power_item} $p_*(s_*(u)^n)=u\cdot s^*s_*(u)^{n-1}$ for any $n\geq 1$ if furthermore $p$ is projective.
\end{enumerate}
\end{lem}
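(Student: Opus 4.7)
The plan is to derive both parts directly from the projection formula for push-forwards along projective maps, combined with the identities $p \circ s = \id_Y$, which give $s^* \circ p^* = \id_{\hh_G(Y)}$ on the nose and $p_* \circ s_* = \id_{\hh_G(Y)}$ by the compatibility axiom A1 for composition of push-forwards. The projection formula itself is just the restatement of the fact that, for any projective $f \colon X \to Y$ in $\GSch{G}$, $f_*$ is a morphism of $\hh_G(Y)$-modules, i.e.\ $f_*(f^*(a) \cdot b) = a \cdot f_*(b)$.

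For part (a), the key move is to rewrite $u$ as $s^*(p^* u)$, so that
\[
s_*(u \cdot v) \;=\; s_*\bigl(s^*(p^* u) \cdot v\bigr) \;=\; p^*(u) \cdot s_*(v)
\]
by the projection formula applied to $s$ (projective by hypothesis). Applying $s^*$, which is a ring homomorphism, and using $s^* p^* = \id$ once more yields $s^* s_*(u \cdot v) = u \cdot s^* s_*(v)$.

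For part (b), I would write $s_*(u)^n = s_*(u)^{n-1} \cdot s_*(u)$ and apply the projection formula for $s$ with $a = s_*(u)^{n-1} \in \hh_G(X)$ and $b = u \in \hh_G(Y)$, obtaining
\[
s_*(u)^n \;=\; s_*\bigl(s^*(s_*(u))^{n-1} \cdot u\bigr).
\]
Pushing forward along $p$ (projective by the additional hypothesis) and invoking functoriality $p_* s_* = (ps)_* = \id_*$ then produces $s^* s_*(u)^{n-1} \cdot u = u \cdot s^* s_*(u)^{n-1}$. The case $n=1$ degenerates to $p_* s_*(u) = u$, which is the same identity with the convention $s^*s_*(u)^0 = 1$.

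There is no substantive obstacle: both statements are bookkeeping exercises around the projection formula and the functoriality of push-forwards. The only subtlety to keep track of is where projectivity is needed — for (a) only $s$ must be projective (so that $s_*$ exists and satisfies the projection formula), while (b) requires $p$ to be projective as well so that $p_*(s_*(u)^n)$ even makes sense.
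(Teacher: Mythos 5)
Your proof is correct and follows essentially the same route as the paper: for part (a) you rewrite $u = s^*p^*(u)$ and apply the projection formula for $s$ followed by the ring-homomorphism property of $s^*$, and for part (b) you factor $s_*(u)^n = s_*(u)^{n-1}\cdot s_*(u)$, apply the projection formula for $s$, and finish with $p_*s_* = (p\circ s)_* = \id$. The bookkeeping is accurate, including the observation that projectivity of $s$ suffices for (a) while (b) additionally needs $p$ projective.
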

\begin{proof}
Part \eqref{cot_item} follows from
\[
s^* s_* (u\cdot v) = s^* s_* \big(s^*p^*(u)\cdot v \big) = s^*\big(p^*(u) \cdot s_*(v)\big) = s^* p^*(u) \cdot s^*s_*(v) = u \cdot s^*s_*(v) 
\]
and part \eqref{power_item} from
\[
p_*(s_*(u)^n)=p_*\Big(s_*(u)\cdot s_*(u)^{n-1}\Big)=p_*\Big(s_*\big(u\cdot s^*(s_*(u)^{n-1})\big)\Big)=u\cdot s^*s_*(u)^{n-1}. \qedhere
\]
\end{proof}
This lemma applies in particular when $p: X \to\pt$ is the structural morphism of $X$ and $s$ is therefore a $G$-fixed point of $X$.
\medskip

We now concentrate on the following setting. Let $\lambda_1$ and $\lambda_2$ be characters of $T$, and let $V_1$ and $V_2$ be the corresponding one dimensional representations of $T$, i.e. $t \in T$ acts on $v \in V_i$ by $t\cdot v= \lambda_i(t) v$. Thus, the projective space $\PP(V_1 \oplus V_2)$ is endowed with a natural $T$-action, induced by the action of $T$ on the direct sum of representations $V_1 \oplus V_2$. Furthermore, the line bundle $\cO(-1)$ has a natural $T$-equivariant structure, that can be described in the following way: The geometric points of the total space of $\cO(-1)$ are pairs $(W,w)$ where $W$ is a rank one sub-vector space of $V_1\oplus V_2$ and $w\in W$. The torus $T$ acts by $t\cdot (W,w) = (t(W),t(w))$.

Two obvious embeddings $V_i\subseteq V_1 \oplus V_2$ induce two
$T$-fixed points closed embeddings $\sigma_1,\sigma_2\colon \pt
\hookrightarrow\PP(V_1 \oplus V_2)$. The open complement to $\sigma_1$
is an affine space isomorphic to $V_1 \otimes V_2^\vee$, with
$T$-action by the character $\lambda_1-\lambda_2$. We set
$\alpha:=\lambda_2-\lambda_1$.
By homotopy
invariance (A\ref{hominv_ax}) applied to the pull-back induced by the
structural morphism of $V_1$, we have $\hh_T(\pt) \isoto \hh_T(V_1)$
with inverse given by the pull-back $\sigma_2^*$ (which actually lands
in $V_1$). The exact localization sequence (A\ref{loc_ax}) can therefore be rewritten as
\[
\xymatrix{
\hh_T(\pt) \ar[r]^-{(\sigma_1)_*} & \hh_T(\PP(V_1 \oplus V_2)) \ar[r]^-{\sigma_2^*} & \hh_T(\pt) \ar[r] & 0 \\
}
\] 
Using the structural map $p: \PP(V_1 \oplus V_2) \to \pt$, we get a splitting $p^*$ of $\sigma_2^*$ and a retract $p_*$ of $(\sigma_1)_*$. Thus, the exact sequence is in fact injective on the left, and we can decompose $\hh_T(\PP(V_1 \oplus V_2))$ using mutually inverse isomorphisms 
\begin{equation} \label{isoP1_eq}
\xymatrix{
\hh_T(\pt) \oplus \hh_T(\pt) \ar@<-1ex>[r]_{((\sigma_1)_*,\
  p^*-(\sigma_1)_*p_*p^*)} & \hh_T(\PP(V_1 \oplus V_2))
\ar@<-1ex>[l]_{\tiny \begin{pmatrix}p_* \\ \sigma_2^* \end{pmatrix}}
}
\end{equation}

\begin{lem} \label{geo_lem}
\begin{enumerate}
\item As $T$-equivariant bundles, we have
$\sigma_i^*(\cO(-1))=V_i$.
\item We have $(\sigma_1)_*(\one) = c_1\big(\cO(1)\otimes p^*(V_2)\big)$ and $(\sigma_2)_*(\one) = c_1(\cO(1)\otimes p^*(V_1))$ in $\hh_T\big(\PP(V_1 \oplus V_2)\big)$.
\item \label{sigmasigma_item} For any $u \in \hh_T(\pt)$, we have $\sigma_1^*(\sigma_1)_*(u)=x_{\alpha} u$, $\sigma_2^*(\sigma_2)_*(u)=x_{-\alpha}u$ and $\sigma_1^*(\sigma_2)_*(u) =\sigma_2^*(\sigma_1)_*(u)=0$.
\end{enumerate}
\end{lem}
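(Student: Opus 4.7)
My plan is to prove the three parts in order, using part (2) as the computational core for part (3). Part (1) is immediate from the construction of the total space of $\cO(-1)$ as pairs $(W,w)$ with $W\subset V_1\oplus V_2$ a rank-one subspace and $w\in W$: the fiber over $\sigma_i$, corresponding to $W=V_i$, is literally $V_i$ with the inherited $T$-action by the character $\lambda_i$.

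For part (2), my strategy is to identify $\cO(\sigma_1)$ equivariantly with $\cO(1)\otimes p^*(V_2)$ and then invoke the normalization axiom (A\ref{ChernOne_ax}). To produce the identification, I would use that the projection $V_1\oplus V_2\to V_2$ along $V_1$ is $T$-equivariant; pulling back to $\PP(V_1\oplus V_2)$ and composing with the tautological inclusion gives a $T$-equivariant morphism
\[
\cO(-1)\hookrightarrow p^*(V_1\oplus V_2)\twoheadrightarrow p^*(V_2),
\]
which vanishes at $[W]$ iff $W\subset V_1$, i.e.\ precisely on $\sigma_1$. Dualizing and twisting by $\cO(1)$, this is a global section of $\cO(1)\otimes p^*(V_2)$ cutting out $\sigma_1$ as an equivariant Cartier divisor, so $\cO(\sigma_1)\simeq \cO(1)\otimes p^*(V_2)$ equivariantly, which yields the first formula. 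The statement for $\sigma_2$ is obtained by exchanging the roles of $V_1$ and $V_2$.

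For part (3), I would combine parts (1) and (2): using $\sigma_i^*\cO(1)\simeq V_i^\vee$ and the explicit form of $(\sigma_j)_*(\one)$, I compute
\[
\sigma_i^*(\sigma_j)_*(\one) = c_1^T(V_i^\vee\otimes V_{3-j}) = x_{\lambda_{3-j}-\lambda_i},
\]
which evaluates to $x_\alpha$ for $i=j=1$, to $x_{-\alpha}$ for $i=j=2$, and to $x_0=0$ for $i\neq j$ (using that $x_0=0$ in $\RMF$ since $0$ is the identity for $\fplus$). To promote this to arbitrary $u\in\hh_T(\pt)$, I would apply the projection formula: since $\sigma_j$ is a section of $p$ so that $\sigma_j^*p^*=\id$,
\[
(\sigma_j)_*(u)=(\sigma_j)_*\bigl(\sigma_j^*p^*(u)\cdot\one\bigr)=p^*(u)\cdot(\sigma_j)_*(\one),
\]
and pulling back by $\sigma_i^*$ (again using $\sigma_i^*p^*=\id$) gives $\sigma_i^*(\sigma_j)_*(u)=u\cdot\sigma_i^*(\sigma_j)_*(\one)$.

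\textbf{Main obstacle.} The one genuinely non-trivial step is identifying the equivariant twist in part (2): non-equivariantly $\cO(\sigma_i)\simeq\cO(1)$, but equivariantly there is an extra factor of $V_{3-i}$ that must be recovered by exhibiting the defining section of the divisor explicitly. Once this is in hand, everything else reduces to reading off characters of one-dimensional representations at the $T$-fixed points and applying the projection formula.
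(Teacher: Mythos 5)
Your proposal is correct and follows essentially the same route as the paper: part (1) is read off the points of the total space, part (2) uses the normalization axiom applied to the divisor $\sigma_1$ (the paper records this as the exact sequence $0 \to \cO(-1)\otimes p^*(V_2)^\vee \to \cO \to \cO_{\sigma_1} \to 0$, which is precisely the cokernel sequence of the section you construct), and part (3) pulls back the Chern class from part (2) together with the projection formula (the paper's Lemma \ref{ipoint_lem}). The one genuine variation is in the case $i\neq j$: the paper disposes of $\sigma_1^*(\sigma_2)_* = 0$ immediately by transverse base change through the empty fiber product, whereas you instead compute $\sigma_i^*(\sigma_j)_*(\one)=c_1^T(V_i^\vee\otimes V_i)=x_0=0$ from the part (2) formula. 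Both are valid; the transversality argument is slightly more robust in that it yields $\sigma_i^*(\sigma_j)_*(u)=0$ for all $u$ directly without first reducing to $u=\one$, but your reduction via the projection formula (using $\sigma_i^* p^* = \id$) is carried out correctly, so there is no gap.
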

\begin{proof}
The first part is easily checked on the geometric points of total spaces and is left to the reader. The second part follows from (A\ref{ChernOne_ax}), given the exact sequence of $T$-equivariant sheaves
\[
0 \to \cO(-1)\otimes p^*(V_2)^\vee \to \cO \to \cO_{\sigma_1} \to 0,
\]
where $\cO_{\sigma_1}$ is the structural sheaf of the closed subscheme given by $\sigma_1$. Again this exact sequence is easy to check and we leave it to the reader.
In the third part, the last equality holds by transverse base change through the empty scheme, while the first two follow from Lemma \ref{ipoint_lem} and 
\[
\sigma_1^*(\sigma_1)_*(\one)=\sigma_1^* c_1\big(\cO(1)\otimes p^*(V_2)\big)=c_1\Big(\sigma_1^*\big(\cO(1)\otimes p^*(V_2)\big)\Big) = c_1\big(V_1^\vee \otimes V_2)=x_{\lambda_2-\lambda_1}.
\]
or a symmetric computation for $\sigma_2^*(\sigma_2)_*(\one)$.
\end{proof}

\begin{lem} \label{pushoneP_lem}
If $x_{\alpha}$ is not a zero divisor in $\sS$, 
then the push-forward \[p_*\colon \hh_T(\PP(V_1 \oplus V_2)) \to
\hh_T(\pt)\text{ satisfies
}p_*(\one)=\tfrac{1}{x_{\alpha}}+
\tfrac{1}{x_{-\alpha}}\] 
(observe that $p_*(1)\in S$ by \cite[3.12]{CPZ}, where it is denoted by $e_{\alpha}$).
\end{lem}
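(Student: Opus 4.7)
The plan is to extract $p_*(\one)$ by applying the decomposition \eqref{isoP1_eq} to the class $(\sigma_2)_*(\one)$, rather than to $\one$ itself (which would just yield a tautology). Since $p\circ\sigma_2=\id_\pt$, the projection formula gives $p_*((\sigma_2)_*(\one))=\one$, and by Lemma \ref{geo_lem}\eqref{sigmasigma_item} we have $\sigma_2^*((\sigma_2)_*(\one))=x_{-\alpha}$. Feeding the pair $(\one,x_{-\alpha})$ through the explicit inverse of \eqref{isoP1_eq} yields
\[
(\sigma_2)_*(\one) \;=\; (\sigma_1)_*(\one) \;+\; x_{-\alpha}\,\one \;-\; x_{-\alpha}\,(\sigma_1)_*\bigl(p_*(\one)\bigr),
\]
after expanding $p^*(x_{-\alpha})=x_{-\alpha}\cdot \one$ and $p_*p^*(x_{-\alpha})=x_{-\alpha}\,p_*(\one)$ (again by the projection formula).

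Next, I apply $\sigma_1^*$ to this identity and evaluate each term with Lemma \ref{geo_lem}\eqref{sigmasigma_item}: the left-hand side gives $\sigma_1^*((\sigma_2)_*(\one))=0$, while the right-hand side gives $x_\alpha + x_{-\alpha} - x_{-\alpha}\,x_\alpha\,p_*(\one)$. This yields the key equation
\[
x_\alpha\,x_{-\alpha}\,p_*(\one) \;=\; x_\alpha + x_{-\alpha} \quad \text{in } \sS.
\]

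Finally, I need to divide by $x_\alpha x_{-\alpha}$. By \cite[3.12]{CPZ}, the element $e_\alpha:=\tfrac{1}{x_\alpha}+\tfrac{1}{x_{-\alpha}}$ is a well-defined element of $\sS$ satisfying $x_\alpha x_{-\alpha}\,e_\alpha = x_\alpha + x_{-\alpha}$, so the equation above becomes $x_\alpha x_{-\alpha}\,(p_*(\one)-e_\alpha)=0$. To conclude $p_*(\one)=e_\alpha$, it suffices to observe that $x_\alpha x_{-\alpha}$ is not a zero divisor in $\sS$: indeed, writing the formal inverse as a power series $-_F x = -x + a_2 x^2 + \cdots$ in the complete ring $\sS$, we have $x_{-\alpha}=x_\alpha\cdot u$ where $u=-1+a_2 x_\alpha+\cdots$ is a unit (its constant term is $-1$), so $x_\alpha x_{-\alpha}=u\,x_\alpha^2$ is a unit multiple of $x_\alpha^2$, and hence not a zero divisor by the hypothesis on $x_\alpha$. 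The only subtle step is this last one, essentially the verification that $\sS$ behaves well enough that the formal quotient $\tfrac{x_\alpha+x_{-\alpha}}{x_\alpha x_{-\alpha}}$ is unambiguously an element of $\sS$; everything else is a short manipulation of \eqref{isoP1_eq} together with Lemma \ref{geo_lem}.
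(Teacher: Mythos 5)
Your proof is correct, and it is a genuinely different route from the paper's. The paper starts from the identity $x_{\alpha}=(\sigma_1)_*(\one)\fminus(\sigma_2)_*(\one)$ of Lemma \ref{geo_lem}, uses transversality to split the formal difference as $(\sigma_1)_*(\one)+\bigl(\fminus(\sigma_2)_*(\one)\bigr)$, and then applies $p_*$; evaluating $p_*$ on the power series $\fminus(\sigma_2)_*(\one)$ requires the power-of-a-class formula of Lemma \ref{ipoint_lem}\eqref{power_item} and produces $\one+\tfrac{x_\alpha}{x_{-\alpha}}$, after which one cancels $x_\alpha$. You instead push $(\sigma_2)_*(\one)$ through the explicit inverse of the decomposition \eqref{isoP1_eq}, then pull back by $\sigma_1^*$ and use Lemma \ref{geo_lem}\eqref{sigmasigma_item} to get the cleaner linear identity $x_\alpha x_{-\alpha}\,p_*(\one)=x_\alpha+x_{-\alpha}$, with no power series expansion of the formal inverse needed. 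The price is that you must cancel $x_\alpha x_{-\alpha}$ rather than just $x_\alpha$, but your observation that $x_{-\alpha}$ is a unit multiple of $x_\alpha$ in the complete ring $\sS$ (so that $x_\alpha x_{-\alpha}$ is a non-zero-divisor whenever $x_\alpha$ is) handles this; note the paper needs essentially the same unit observation to make sense of $\tfrac{x_\alpha}{x_{-\alpha}}\in\sS^\times$, so the two arguments rely on the same hypotheses. Both are valid; yours trades the combinatorics of the power series $\fminus$ for the bookkeeping of the explicit splitting \eqref{isoP1_eq}.
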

\begin{proof}
By Lemma \ref{geo_lem}, we have
\[
\begin{split}
x_{\alpha} & = c_1(p^*(V_2 \otimes V_1^\vee)) = c_1(\cO(1)\otimes p^*(V_2) \otimes (\cO(1)\otimes p^*(V_1))^\vee) \\
 & = c_1\big(\cO(1)\otimes p^*(V_2) \big) \fminus c_1\big(\cO(1)\otimes p^*(V_1)\big) = (\sigma_1)_*(\one) \fminus (\sigma_2)_*(\one).
\end{split}
\]
By transverse base change, we have $(\sigma_1)_*(\one)\cdot
(\sigma_2)_*(\one) =0$, and therefore \[
(\sigma_1)_*(\one) \fminus (\sigma_2)_*(\one) = (\sigma_1)_*(\one) + \big(\fminus (\sigma_2)_*(\one)\big).\]
Since $x_{\alpha}$ is not a zero divisor in $\sS$, it
suffices to prove that \[
x_{\alpha} \cdot p_*(\one)=\one+\tfrac{x_{\alpha}}{x_{-\alpha}},\] where $\tfrac{x_{\alpha}}{x_{-\alpha}}\in\sS^\times$ is the power series $\tfrac{\fminus (x)}{x}$ applied to $x=x_{-\alpha}$. Now, 
\[
\begin{split}
x_{\alpha}p_*(\one) & = p_*(x_{\alpha}) = p_*\big((\sigma_1)_*(\one)+(\fminus (\sigma_2)_*(\one))\big) \\
& = \one + p_*(\fminus (\sigma_2)_*(\one)) = \one + \tfrac{x_{\alpha}}{x_{-\alpha}}.
\end{split}
\]
where the last equality follows from Lemma \ref{ipoint_lem}, part \eqref{power_item}.
\end{proof}

Let $\sigma = \sigma_1 \sqcup \sigma_2 \colon \pt \sqcup \pt \to \PP(V_1 \oplus V_2)$ be the inclusion of both $T$-fixed points.

\begin{lem} \label{injimageP1_lem}
If $x_{\alpha}$ is not a zero divisor in $\sS$, the
pull-back $\sigma^*$ is injective, and 
\[
\im \sigma^*=\{(u,v)\in \hh_T(\pt) \oplus \hh_T(\pt) \mid
x_{-\alpha}u+x_{\alpha}v \in x_{\alpha}x_{-\alpha}\cdot \hh_T(\pt)\}.
\]
\end{lem}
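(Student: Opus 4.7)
The plan is to exploit the direct sum decomposition \eqref{isoP1_eq}: every $z \in \hh_T(\PP(V_1 \oplus V_2))$ can be written uniquely as
\[
z = (\sigma_1)_*(a) + p^*(b) - (\sigma_1)_* p_* p^*(b),
\]
with $a = p_*(z) \in \sS$ and $b = \sigma_2^*(z) \in \sS$. Once $\sigma^*$ is expressed in the coordinates $(a,b)$, both the injectivity and the description of the image can be read off directly.

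The first step is to compute $\sigma^*(z)$. By the projection formula, $p_* p^*(b) = b \cdot p_*(\one)$, and Lemma \ref{pushoneP_lem} gives $p_*(\one) = \tfrac{1}{x_{\alpha}} + \tfrac{1}{x_{-\alpha}}$. Combining this with Lemma \ref{geo_lem}\eqref{sigmasigma_item} and the obvious identity $\sigma_i^* p^* = \id$, I obtain
\[
\sigma_2^*(z) = b, \qquad \sigma_1^*(z) = x_\alpha a + b - x_\alpha b \Big(\tfrac{1}{x_\alpha} + \tfrac{1}{x_{-\alpha}}\Big) = x_\alpha a - \tfrac{x_\alpha}{x_{-\alpha}} b,
\]
where $\tfrac{x_\alpha}{x_{-\alpha}}$ stands for the power series $\tfrac{\fminus y}{y}$ evaluated at $y = x_{-\alpha}$, an element of $\sS$ whose constant term is $-1$ and which is therefore a unit. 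Note also that $x_{-\alpha} = x_\alpha \cdot (\text{unit})$ (since $x_{-\alpha} = \fminus x_\alpha$), so the hypothesis that $x_\alpha$ is not a zero divisor implies the same for $x_{-\alpha}$ and for the product $x_\alpha x_{-\alpha}$.

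Injectivity is now immediate: $\sigma^*(z) = 0$ forces $b = 0$, then $x_\alpha a = 0$, hence $a = 0$ and $z = 0$. For the image, writing $(u,v) = \sigma^*(z)$ gives $v = b$ and $u = x_\alpha a - \tfrac{x_\alpha}{x_{-\alpha}} v$; multiplying by $x_{-\alpha}$ yields the integrality relation $x_{-\alpha} u + x_\alpha v = x_\alpha x_{-\alpha}\, a \in x_\alpha x_{-\alpha}\, \hh_T(\pt)$. Conversely, any pair $(u,v)$ satisfying this divisibility condition determines a unique $a \in \sS$ with $x_\alpha x_{-\alpha}\, a = x_{-\alpha} u + x_\alpha v$ (uniqueness by non-zero-divisibility of $x_\alpha x_{-\alpha}$), and the element $z$ built from this $a$ and $b = v$ via the decomposition above pulls back to $(u,v)$, giving the reverse inclusion. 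There is no real conceptual obstacle; the only delicate point is keeping track of $p_*(\one)$ as an honest element of $\sS$ and of the unit $\tfrac{x_\alpha}{x_{-\alpha}}$, which is precisely the content of Lemma \ref{pushoneP_lem}.
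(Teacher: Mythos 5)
Your proposal is correct and follows essentially the same route as the paper: identify $\sigma^*$ with a $2\times 2$ matrix in the coordinates of the decomposition \eqref{isoP1_eq}, compute its entries using Lemma \ref{geo_lem}\eqref{sigmasigma_item}, the projection formula and Lemma \ref{pushoneP_lem}, then read off injectivity and the image. The only difference is that you spell out the reverse inclusion for the image explicitly, where the paper simply observes that the image of the (triangular, unit-lower-right) matrix is of the stated form.
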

\begin{proof}
Since $\hh_T(\pt \sqcup \pt) = \hh_T(\pt) \oplus \hh_T(\pt)$ identifies $\sigma^*$ with $(\sigma_1^*,\sigma_2^*)$, it suffices to check that the composition 
\[
\xymatrix{
\hh_T(\pt) \oplus \hh_T(\pt) \ar[r]^*!/u1ex/{\labelstyle ((\sigma_1)_*,\ p^*-(\sigma_1)_*p_*p^*)}_{\simeq} & \hh_T(\PP(V_1 \oplus V_2)) \ar[r]^{\tiny\begin{pmatrix}\sigma_1^* \\ \sigma_2^* \end{pmatrix}} & \hh_T(\pt) \oplus \hh_T(\pt)
}
\]
is injective. Indeed, it is given by the matrix
\[\tiny
\begin{pmatrix}
\sigma_1^*(\sigma_1)_* & \sigma_1^* p^* - \sigma_1^* (\sigma_1)_* p_* p^* \\
\sigma_2^* (\sigma_1)_* & \sigma_2^* p^* - \sigma_2^* (\sigma_1)_* p_* p^*
\end{pmatrix}
=
\begin{pmatrix}
x_{\alpha} & \one - x_{\alpha}\cdot p_*(\one) \\
0 & \one 
\end{pmatrix}
=
\begin{pmatrix}
x_{\alpha} & - \frac{x_{\alpha}}{x_{-\alpha}} \\
0 & \one 
\end{pmatrix}
\]
where in the first equality, we have used $p\circ \sigma_i=\id$, Lemma \ref{geo_lem} part \eqref{sigmasigma_item}, to get the $1$'s and the $0$, and then the projection formula $p_*p^*(u)=u\cdot p_*(\one)$ and Lemma \ref{ipoint_lem} to get $\sigma_1^* (\sigma_1)_* p_* p^*(u)=x_{\alpha}p_*(1)\cdot u$. The last equality holds by Lemma \ref{pushoneP_lem}. 

Finally, the image of this matrix is of the expected form.
\end{proof}

Let $\sS[\tfrac{1}{x_{\alpha}}]$ be the localization of $\sS$ at the
multiplicative subset generated by $x_{\alpha}$. Since
$\tfrac{x_{\alpha}}{x_{-\alpha}}$ is invertible, there is a canonical isomorphism $\sS[\tfrac{1}{x_{\alpha}}] \simeq \sS[\tfrac{1}{x_{-\alpha}}]$. We consider the $S[\tfrac{1}{x_{\alpha}}]$-linear operator 
\[
A\colon \sS[\tfrac{1}{x_{\alpha}}] \oplus \sS[\tfrac{1}{x_{\alpha}}]
\longrightarrow  \sS[\tfrac{1}{x_{\alpha}}]\text{ given by }
(u,v) \mapsto \tfrac{u}{x_{\alpha}} + \tfrac{v}{x_{-\alpha}}.
\]
Note that by the previous lemma, it sends the image of $\sigma^*$ to $\sS$ inside $\sS[\tfrac{1}{x_{\alpha}}]$.

\begin{lem} \label{compP1_lem} If $x_{\alpha}$ is not a zero divisor in $\sS$, the following diagram commutes.
\[
\xymatrix{
\hh_T\big(\PP(V_1\oplus V_2)\big) \ar[d]_{p_*} \ar[r]^{\sigma^*} & \hh_T(\pt) \oplus \hh_T(\pt) & \sS \oplus \sS \ar[l]_-{\simeq} \ar@{}[r]|-{\subseteq} & \sS[\tfrac{1}{x_{\alpha}}] \oplus \sS[\tfrac{1}{x_{\alpha}}] \ar[d]^{A} \\
\hh_T(\pt) &  & \sS \ar[ll]_-{\simeq} \ar@{}[r]|-{\subseteq} & \sS[\tfrac{1}{x_{\alpha}}] \\
}
\]
\end{lem}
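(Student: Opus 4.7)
My plan is to reduce the commutativity to a verification on two generators, using the $\sS$-linearity of every map in the diagram. Every arrow is $\sS$-linear: $p_*$ by the projection formula (since $\sS$ acts on $\hh_T(\PP(V_1\oplus V_2))$ through $p^*$), $\sigma^*$ as a ring map, the two inclusions obviously, and $A$ by construction (it is even $\sS[\tfrac{1}{x_\alpha}]$-linear). The decomposition \eqref{isoP1_eq} expresses every element of $\hh_T(\PP(V_1\oplus V_2))$ as an $\sS$-linear combination of $(\sigma_1)_*(\one)$ and $\one=p^*(\one)$, so it suffices to check the identity
\[
p_*(w) \;=\; \tfrac{\sigma_1^*(w)}{x_\alpha}+\tfrac{\sigma_2^*(w)}{x_{-\alpha}}
\]
in $\sS[\tfrac{1}{x_\alpha}]$ for $w=\one$ and $w=(\sigma_1)_*(\one)$.

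For $w=\one$, Lemma \ref{pushoneP_lem} gives $p_*(\one)=\tfrac{1}{x_\alpha}+\tfrac{1}{x_{-\alpha}}$, which matches the right-hand side since $\sigma_i^*(\one)=\one$. For $w=(\sigma_1)_*(\one)$, functoriality of push-forwards together with $p\circ\sigma_1=\id_{\pt}$ yields $p_*(\sigma_1)_*(\one)=\one$, while Lemma \ref{geo_lem}\eqref{sigmasigma_item} gives $\sigma_1^*(\sigma_1)_*(\one)=x_\alpha$ and $\sigma_2^*(\sigma_1)_*(\one)=0$, so the right-hand side is $\tfrac{x_\alpha}{x_\alpha}+0=\one$. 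Both checks confirm the identity.

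The assumption that $x_\alpha$ is not a zero divisor enters in two routine ways: it makes the inclusion $\sS\hookrightarrow\sS[\tfrac{1}{x_\alpha}]$ injective, so that the equality in the localization is genuinely equivalent to the corresponding statement over $\sS$; and it is required to invoke Lemma \ref{pushoneP_lem}. I do not anticipate any further obstacle, as the argument is simply a reduction to two explicit generators followed by direct evaluation via the preceding lemmas.
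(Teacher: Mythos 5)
Your proof is correct and takes essentially the same approach as the paper: both reduce, via the decomposition \eqref{isoP1_eq}, to a direct computation with the generators $\one$ and $(\sigma_1)_*(\one)$, using Lemmas \ref{geo_lem} and \ref{pushoneP_lem}. The paper precomposes with the full isomorphism of \eqref{isoP1_eq} and reuses the matrix computed in Lemma \ref{injimageP1_lem}, while you check the identity directly on the two generators by $\sS$-linearity — the same idea, organized slightly differently.
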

\begin{proof}
It suffices to check the equality of the two maps after precomposition by the isomorphism $\hh_T(\pt)\oplus \hh_T(\pt) \to \hh_T\big(\PP(V_1\oplus V_2)\big)$ given in \eqref{isoP1_eq}. Using the matrix already computed in the proof of Lemma \ref{injimageP1_lem}, one obtains that the upper right composition sends $(u,v)$ to $u$. The lower left composition sends $(u,v)$ to
\[
p_*\big((\sigma_1)_*(u)+p^*(v)-(\sigma_1)_*p_*p^*(v)\big) = u+ p_*p^*(v) -p_*p^*(v) =u. \qedhere
\] 
\end{proof}

\section{Algebraic and combinatorial objects} \label{algcomb_sec}

Let us now introduce the main algebraic objects $\DcFd$, $\DcFPd{\Xi}$, $\SWd$ and $\SWPd{\Xi}$ that play the role of algebraic replacements for some equivariant cohomology groups in the remaining of this paper. These objects were discussed in detail in \cite{CZZ} and \cite{CZZ2}, and we only give a brief overview here. Their geometric interpretation will be explained in the next sections. 

\medskip

Let $\RS\hookrightarrow \cl^\vee$, $\alpha\mapsto \alpha^\vee$ be a root datum. The rank of the root datum is the dimension of $\mathbb{Q}\otimes_\ZZ\cl$, and elements in $\RS$ are called roots. The root lattice $\cl_r$ is the subgroup of $\cl$ generated by elements in $\RS$, and the weight lattice  is defined as $$\cl_w=\{\omega\in \mathbb{Q}\otimes_\mathbb{Z}\mid \alpha^\vee(\omega)\in \ZZ \text{ for all }\alpha\in \RS\}.$$ We have $\cl_r\subseteq \cl\subseteq \cl_w$. We always assume that the root datum is {\it semisimple} (the ranks of $\cl$, $\cl_r$, $\cl_w$ are equal and no root is twice any other root). The root datum is called {\it simply connected } (resp. {\it adjoint}) if $\cl=\cl_w$ (resp. $\cl=\cl_r$) and if it is furthermore irreducible of rank $n$, we use the notation $\mathcal{D}_n^{sc}$ (resp. $\mathcal{D}_n^{ad}$) for its Dynkin type, with $\mathcal{D}$ among $A$, $B$, $C$, $D$, $G$, $F$, $E$.

\medskip

The Weyl group $W$ of the root datum is the subgroup of $\mathrm{Aut}_\ZZ(\cl)$ generated by simple reflections 
\[
s_\alpha(\lambda)=\lambda-\alpha^\vee(\lambda)\alpha, ~\lambda\in \cl.
\]
Fixing a set of simple roots $\Pi=\{\alpha_1,...,\alpha_n\}$ induces a partition  $\RS=\RS^+\cup \RS^-$, where $\RS^+$ is the set of positive roots and $\RS^-=-\RS^+$ is the set of negative roots. The Weyl group $W$ is actually generated by $s_i:=s_{\alpha_i}$, $i=1,...,n$.

\medskip

Let $F$ be a one-dimensional commutative formal group law over a commutative ring $R$. Let $\sS=\RcF$. From now on we always assume that 
\begin{ass}
The algebra $\sS$ is $\RS$-regular, that is, $x_\alpha$ is regular in $\sS$ for all $\alpha\in \RS$ (see \cite[Def.~4.4]{CZZ}).
\end{ass}
This holds if $2$ is regular in $R$, or if the root datum does not contain an irreducible component of type $C_k^{sc}$ \cite[Rem.~4.5]{CZZ}. 

\medskip

The action of $W$ on $\cl$ induces an action of $W$ on $\sS$, and let $\SW$ be the $R$-algebra defined as $\sS\otimes_R R[W]$ as an $R$-module, and with product given by 
\[
q\delta_w q'\delta_{w'}=qw(q')\delta_{ww'}, \quad q,q'\in \sS,\; w,w'\in W.
\]
Let $Q=\sS[\frac{1}{x_\alpha}|\alpha\in \RS]$ and $\QW=Q\otimes_\sS\SW$, with ring structure given by the same formula with $q, q' \in Q$. Then $\{\delta_w\}_{w\in W}$ is an $\sS$-basis of $\SW$ and a $Q$-basis of $\QW$. There is an action of $\QW$ on $Q$, restricting to an action of $\SW$ on $\sS$, and given by 
\[
q\delta_w\cdot q'=qw(q'), \quad q,q'\in Q,\; w\in W.
\]

For each $\alpha\in \RS$, we define $\kp_\alpha=\frac{1}{x_\alpha}+\frac{1}{x_{-\alpha}}\in \sS$. 
\begin{defi} \label{Xalpha_defi}
For any $\alpha\in \RS$, let 
\[
X_\alpha=\tfrac{1}{x_\alpha}-\tfrac{1}{x_\alpha}\delta_{s_\alpha}, \quad Y_{\alpha}=\kp_\alpha-X_\alpha=\tfrac{1}{x_{-\alpha}}+\tfrac{1}{x_\alpha}\delta_{s_\alpha},
\]
in $\QW$, respectively called a {\it formal Demazure element} and a {\it formal push-pull element}.
\end{defi}

For each sequence $(i_1,...,i_k)$ with $1\le i_j\le n$, we define $X_I=X_{\alpha_{i_1}}\cdots X_{\alpha_{i_k}}$ and $Y_I=Y_{\alpha_{i_1}}\cdots Y_{\alpha_{i_k}}$. 
\begin{defi}
Let $\DcF$ be the $R$-subalgebra of $\QW$ generated by elements from $\sS$ and the elements $X_\alpha$, $\alpha \in \RS$. 
\end{defi}
Since $\delta_{s_i}=1-x_{\alpha_i}X_{\alpha_i}$, we have $\SW\subseteq \DcF$. By \cite[Prop.~7.7]{CZZ}, $\DcF$ is a free $\sS$-module and for any choice of reduced decompositions $I_w$ for every element $w \in W$ the family $\{X_{I_w}\}_{w\in W}$ is an $\sS$-basis of $\DcF$. 

\medskip

There is a coproduct structure on the $Q$-module $\QW$ defined by 
\[
\QW\otimes_Q\QW \to \QW, ~q\delta_w\mapsto q\delta_w\otimes \delta_w,
\]
with counit $\QW\to Q, q\delta_w\mapsto q$. Here $\QW\otimes_Q\QW$ is the tensor product of left $Q$-modules. By the same formula, one can define a coproduct structure on the $\sS$-module $\SW$. The coproduct on $\QW$ induces a coproduct structure on $\DcF$ as a left $\sS$-module. 

\medskip

On duals $\SWd=\Hom_\sS(\SW,\sS)$, $\DcFd=\Hom_S(\DcF,\sS)$ and $\QWd=\Hom_Q(\QW, Q)$ (notice the different stars $\star$ for $\sS$-duality and $*$ for $Q$-duality), the respective coproducts induce products. In $\SWd$ or $\QWd$, this product is given by the simple formula 
\[
f_v f_w=\Kr_{v,w} f_v
\]
on the dual basis $\{f_v\}_{w \in W}$ to $\{\delta_w\}_{w\in W}$, with $\Kr_{v,w}$ the Kronecker delta.
The multiplicative identity is $\unit=\sum_{v\in W}f_v$. 
Let $\eta$ be the inclusion $\SW \subseteq \DcF$. It induces an $\sS$-algebra map $\eta^\star:\DcFd \to \SWd$, which happens to be injective \cite[Lemma 10.2]{CZZ2}. 
Furthermore, after localization, $\eta_Q: \QW \to Q \otimes_\sS \DcFd$ is an isomorphism and by freeness, we have $Q\otimes_\sS\DcFd \simeq \Hom_Q(Q\otimes_\sS \DcF,Q)$ and thus $Q \otimes_\sS \DcFd \simeq \QWd$, as left $Q$-rings. 

\medskip

There is a $Q$-linear action of the $R$-algebra $\QW$ on $\QWd$ given by 
\[
(z\act f)(z')=f(z'z),\quad z,z'\in \QW, f\in \QWd.
\]
as well as $\sS$-linear actions of $\SW$ on $\SWd$ and of $\DcF$ on $\DcFd$, given by the same formula.
With this action, it is proved in \cite[Theorem 10.13]{CZZ2} that $\DcFd$ is a free $\DcF$-module of rank 1 and any $w \in W$ gives a one-element basis $\{x_\Pi\act f_w\}$ of it, where $x_\Pi=\prod_{\alpha\in \RS^-}x_\alpha$.

The map $\cmS: S \to \DcFd$ sending $s$ to $s \act \unit$ is called the \emph{algebraic (equivariant) characteristic map}, and it is of special importance (see section \ref{charmap_sec}).

\medskip
We now turn to the setting related to parabolic subgroups.
Let $\Xi\subseteq \Pi$ be a subset and let $W_\Xi$ be the subgroup of $W$ generated by the $s_i$ with $\alpha_i\in \Xi$. Let $\RS_\Xi=\{\alpha\in \RS|s_\alpha\in W_\Xi\}$, and define $\RS_\Xi^+=\RS^+\cap \RS_{\Xi}$ and $\RS^-_\Xi=\RS^-\cap \RS_\Xi$. For $\Xi'\subseteq \Xi\subseteq \Pi$, let $\RS^+_{\Xi/\Xi'}=\RS_\Xi^+\backslash \RS_{\Xi'}^+$ and $\RS^-_{\Xi/\Xi'}=\RS_\Xi^-\backslash \RS_{\Xi'}^-$. In $\sS$, we set 
\[
x_{\Xi/\Xi'}=\prod_{\alpha\in \RS^-_{\Xi/\Xi'}}x_\alpha \quad\text{and}\quad x_\Xi=x_{\Xi/\emptyset}.
\]

Let $\SWP{\Xi}$ be the free $\sS$-module with basis $\{\delta_{\bar w}\}_{\bar w\in W/W_\Xi}$ and let $\QWP{\Xi}=Q\otimes_\sS\SWP{\Xi}$ be its localization.

\medskip

As on $\QW$, one can define a coproduct structure on $\QWP{\Xi}$ and $\SWP{\Xi}$, by the same diagonal formula.
Let 
\[\SWPd{\Xi}=\Hom_\sS(\SWP{\Xi},\sS)\quad\text{and}\quad\QWPd{\Xi}=\Hom_Q(\QWP{\Xi},Q)\]
be the respective dual rings of the corings $\SWP{\Xi}$ and $\QWP{\Xi}$.
On the basis $\{f_{\bar v}\}_{\bar v\in W/W_\Xi}$ dual to the basis $\{\de_{\bar w}\}_{\bar w\in W/W_\Xi}$, the unit element is $\unit_\Xi=\sum_{\bar v\in W/W_\Xi}f_{\bar v}$, both in $\SWPd{\Xi}$ and in $\QWPd{\Xi}$.

\medskip

Assume $\Xi' \subseteq \Xi$. Let $\bar{w} \in W/W_{\Xi'}$ and let $\hat{w}$ denote its class in $W/W_{\Xi}$. Consider the projection and the sum over orbits
\[
\begin{array}[t]{rccc}
\sproj{\Xi'}{\Xi} : & \SWP{\Xi'} & \to & \SWP{\Xi} \\
 & \de_{\bar{w}} & \mapsto & \de_{\hat{w}}
\end{array}
\quad
\text{and}
\quad
\begin{array}[t]{rccc}
\sdiag{\Xi'}{\Xi} : & \SWP{\Xi} & \to & \SWP{\Xi'} \\
 & \de_{\hat{w}} & \mapsto & \sum\limits_{\substack{\bar{v} \in W/W_{\Xi'} \\ \hat{v}=\hat{w}}} \de_{\bar{v}}
\end{array} \vspace{-3ex}
\] 
with $\sS$-dual maps
\[
\arraycolsep=.4ex
\begin{array}[t]{rccc}
\sproj{\Xi'}{\Xi}^\star : & \SWPd{\Xi} & \to & \SWPd{\Xi'} \\
 & f_{\hat{w}} & \mapsto & \sum\limits_{\substack{\bar{v} \in W/W_{\Xi'} \\ \hat{v}=\hat{w}}} f_{\bar{v}}
\end{array}
\quad
\text{and}
\quad
\begin{array}[t]{rccc}
\sdiag{\Xi'}{\Xi}^\star : & \SWPd{\Xi'} & \to & \SWPd{\Xi} \\
 & f_{\bar{w}} & \mapsto & f_{\hat{w}}
\end{array}.
\] 
Note that $\sproj{\Xi'}{\Xi}$ respects coproducts, so $\sproj{\Xi'}{\Xi}^\star$ is a ring map while $\sdiag{\Xi'}{\Xi}^\star$ isn't.
\medskip

We set $\sprojB{\Xi}=\sproj{\emptyset}{\Xi}$. 
Let $\DcFP{\Xi}$ denote the image of $\DcF$ via $\sprojB{\Xi}$. The coproduct structure on $\QWP{\Xi}$ induces an $\sS$-linear coproduct structure on $\DcFP{\Xi}$, so its $\sS$-dual $\DcFPd{\Xi}$ has a ring structure.

In summary, we have the following diagram followed by its dualization 
\[
\xymatrix{
\SWP{\Xi'} \ar@{^(->}[r]^-{\eta_{\Xi'}} \ar@{->>}[d]^{\sproj{\Xi'}{\Xi}} & \DcFP{\Xi'} \ar@{^(->}[r] \ar@{->>}[d]^{\sproj{\Xi'}{\Xi}} & \QWP{\Xi'} \ar@{->>}[d]^{\sproj{\Xi'}{\Xi}}\\
\SWP{\Xi} \ar@{^(->}[r]^-{\eta_{\Xi}} & \DcFP{\Xi} \ar@{^(->}[r] & \QWP{\Xi}} 
\qquad 
\xymatrix{
\DcFPd{\Xi'} \ar@{^(->}[r]^-{\eta_{\Xi'}^\star} & \SWPd{\Xi'} \ar@{^(->}[r] & \QWPd{\Xi'} \\
\DcFPd{\Xi} \ar@{^(->}[u]^{\sproj{\Xi'}{\Xi}^\star} \ar@{^(->}[r]^-{\eta_\Xi^\star} & \SWPd{\Xi} \ar@{^(->}[u]^{\sproj{\Xi'}{\Xi}^\star} \ar@{^(->}[r] & \QWPd{\Xi} \ar@{^(->}[u]^{\sproj{\Xi'}{\Xi}^\star}}
\]
in which all horizontal maps become isomorphisms after tensoring by $Q$ on the left.
It will receive a geometric interpretation as Diagram \eqref{pullcube_eq}. 
Moreover, by \cite[Lemma 11.7]{CZZ2}, the image of $\sprojB{\Xi}^\star$ in $\DcFd$ (or $\SWd$, $\QWd$) is the subset of $W_\Xi$-invariant elements. 

\medskip

There is no `$\act$'-action of $\SWP{\Xi}$ on $\SWPd{\Xi}$ because $\SWP{\Xi}$ is not a ring. But since $x_{\Pi/\Xi}\in \sS^{W_\Xi}$, the element $x_{\Pi/\Xi}\act f$ is well-defined for any $f\in \SWPd{\Xi}$ and actually belongs to $\DcFPd{\Xi}$ inside $\SWPd{\Xi}$, by \cite[Lemma 15.3]{CZZ2}. This defines a map $\DcFPd{\Xi} \to \SWP{\Xi}$, interpreted geometrically in Diagram \eqref{fixeddiagramP_eq}.
\medskip

For a given set of representatives of $W_\Xi/W_{\Xi'}$ we define the  
{\em push-pull element} by 
\[
Y_{\Xi/\Xi'}=\sum_{w\in W_{\Xi/\Xi'}}\delta_w\tfrac{1}{x_{\Xi/\Xi'}}\in \QW.
\]
We set 
$Y_{\Xi}=Y_{\Xi/\emptyset}$. If $\Xi=\{\alpha_i\}$, then 
$Y_{\Xi}=Y_{\alpha_i}$. By \cite[Lemma 10.12]{CZZ2}, $Y_\Xi\in \DcF$.

\medskip

Let 
\[
\arraycolsep=.4ex
\begin{array}[t]{rccc}
\aA_{\Xi/\Xi'}: & (\QWd)^{W_{\Xi'}} & \to & (\QWd)^{W_{\Xi}} \\
& f & \mapsto & Y_{\Xi/\Xi'}\act f
\end{array}
\quad
\text{and}
\quad
\begin{array}[t]{rccc}
\cA_{\Xi/\Xi'}: & \QWPd{\Xi'} & \to & \QWPd{\Xi} \\
& f & \mapsto & \sdiag{\Xi'}{\Xi}^\star(\tfrac{1}{x_{\Xi/\Xi'}}\bullet f)
\end{array}
\]
and respectively call them {\it push-pull operator} and {\it push-forward operator}. 
The operator $\cA_{\Xi/\Xi'}$ is actually independent of the choice of representatives \cite[Lem.~6.5]{CZZ2}. We have $A_{\Xi/\Xi'}((\DcFd)^{W_{\Xi'}})= (\DcFd)^{W_\Xi}$ by \cite[Cor.~14.6]{CZZ2} and $\cA_{\Xi/\Xi'}$ induces a map $\cA_{\Xi/\Xi'}:\DcFPd{\Xi'}\to \DcFPd{\Xi}$ by \cite[Lemma 15.1]{CZZ2}. These two operators are related by the commutative diagram on the left below, becoming the one on the right after tensoring by $Q$. 
\[
\xymatrix{
\DcFPd{\Xi'}\ar[r]^-{\sprojB{\Xi'}^\star}_-\simeq\ar[d]_{\cA_{\Xi/\Xi'}} & (\DcFd)^{W_{\Xi'}}\ar@<-2ex>[d]^{A_{\Xi/\Xi'}}\\
\DcFPd{\Xi}\ar[r]^-{\sprojB{\Xi}^\star}_-\simeq & (\DcFd)^{W_\Xi}
}
\hspace{6ex}
\xymatrix{
\QWPd{\Xi'}\ar[r]^-{\sprojB{\Xi'}^\star}_-\simeq\ar[d]_{\cA_{\Xi/\Xi'}} & (\QWd)^{W_{\Xi'}}\ar@<-2ex>[d]^{A_{\Xi/\Xi'}}\\
\QWPd{\Xi}\ar[r]^-{\sprojB{\Xi}^\star}_-\simeq & (\QWd)^{W_\Xi}
}
\]
Again, when $\Xi'=\emptyset$, we set $\aA_\Xi=\aA_{\Xi/\emptyset}$ and $\cA_\Xi=\cA_{\Xi/\emptyset}$.

\section{Fixed points of the torus action}

We now consider a split semi-simple algebraic group $G$ over $k$ containing $T$ as a maximal torus. Let $W$ be the Weyl group associated to $(G,T)$, with roots $\RS \subseteq \cl$. We choose a Borel subgroup $B$ of $G$ containing $T$. It defines a set $\Pi$ of simple roots in $W$. Given a subset $\Xi\subseteq \Pi$, the subgroup generated by $B$ and representatives in $G(k)$ of reflections with respect to roots in $\Xi$ is a parabolic subgroup, denoted by $P_{\Xi}$. The map sending $\Xi$ to $P_\Xi$ is a bijection between subsets of $\Pi$ and parabolic subgroups of $G$ containing $B$. Let $W_{\Xi}$ be the subgroup of $W$ generated by reflections with respect to roots in $\Xi$. We will abuse the notation by also writing $W$ (or $W_\Xi$, etc.) when referring to the constant finite algebraic group over $\pt$ whose set of points over any field is $W$. 

\medskip

For any parabolic subgroup $P$, the quotient variety $G/P$ is projective and we consider it in $\TSch$ by letting $T$ act on $G$ by multiplication on the left. After identifying $W\simeq \mathrm{N}_G(T)/T$,
the Bruhat decomposition says that $G/P=\amalg_{w\in W^\Xi} BwP_\Xi/P_\Xi$, where the union is taken over 
the set $W^\Xi$ of minimal left coset-representatives of $W/W_\Xi$.
The latter induces a bijection between $k$-points of $G/P_\Xi$ that are fixed by $T$ and the set $W^\Xi$ (or $W/W_\Xi$). In particular, fixed $k$-points of $G/B$ are in bijection with elements of $W$.

\medskip

Let $(G/P_\Xi)^T=\amalg_{\bar w \in W/W^\Xi} \pt_{\bar w}$ denote the closed subvariety of $T$-fixed $k$-points, then by additivity there is an $\sS=\hh_T(\pt)$-algebra isomorphism
\[
\Theta_\Xi\colon \hh_T((G/P_\Xi)^T)\stackrel{\simeq}\longrightarrow \prod_{\bar w\in W/W_\Xi}\hh_T(\pt_{\bar w})=\prod_{\bar w\in W/W_\Xi} \sS \cong \SWPd{\Xi}.
\]
If $\Pi=\emptyset$, we denote $\Theta:\hh_T((G/B)^T)=\hh_T(W)\to \SWd.$

\medskip

Let $\incP{\Xi}\colon (G/P_{\Xi})^T \hookrightarrow G/P_\Xi$ denote the (closed) embedding of the $T$-fixed locus, 
and let $\incP{\Xi}^{\bar{w}}\colon \pt_{\bar w} \hookrightarrow G/P_\Xi$ denote the embedding corresponding to $\bar{w}$. 
Given $\Xi'\subseteq \Xi \subseteq \Pi$, we define projections
\[
\proj{\Xi'}{\Xi}\colon G/P_{\Xi'} \to G/P_\Xi \hspace{5ex} \text{and}\hspace{5ex} \fproj{\Xi'}{\Xi}\colon W/W_{\Xi'} \to W/W_{\Xi}
\]
(here we view $W/W_\Xi$ as a variety that is a disjoint union of copies of $\pt$ indexed by cosets).
If $\Xi=\{\alpha\}$ consists of a single simple root $\alpha$, we
omit the brackets in the indices, i.e. we abbreviate $W_{\{\alpha\}}$ as
$W_\alpha$, $P_{\{\alpha\}}$ as $P_{\alpha}$, etc. 
If $\Xi'=\emptyset$, we omit the $\emptyset$ in the notation,
\ie $\proj{\emptyset}{\Xi}=\pi_\Xi$, $\fproj{\emptyset}{\Xi}=\rho_\Xi$, etc. By definition, we have
\begin{equation} \label{finitepush_eq}
\mainisoP{\Xi} \circ (\fproj{\Xi'}{\Xi})_*  = \sdiag{\Xi'}{\Xi}^\star \circ \mainisoP{\Xi'}
\hspace{3ex}\text{and}\hspace{3ex} 
\mainisoP{\Xi'} \circ (\fproj{\Xi'}{\Xi})^*  = \sproj{\Xi'}{\Xi}^\star\circ \mainisoP{\Xi}. 
\end{equation}

\begin{lem} \label{tangent_lem}
Let $w \in W$ be a representative of $\bar{w} \in W/W_\Xi$. 
The pull-pack $(\incP{\Xi}^{\bar{w}})^*\TB{G/P_{\Xi}}$ of the tangent bundle $\TB{G/P_{\Xi}}$ of $G/P_{\Xi}$
is the representation of $T$ (the $T$-equivariant bundle over a point) with weights $\{w(\alpha)\mid \alpha \in \RS^-_{\Pi/\Xi}\}$
(observe that by \cite[Lemma 5.1]{CZZ2} it is independent of the choice of a representative $w$). 
\end{lem}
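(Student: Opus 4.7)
The plan is to reduce to the base point $\bar w=\bar e$ by left translation, and to treat that case via the root-space decomposition of $\mathrm{Lie}(G)$.

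\emph{Case $w=e$.} The differential of the quotient map $G\to G/P_\Xi$ at the identity yields a $T$-equivariant identification
\[
(\incP{\Xi}^{\bar e})^*\TB{G/P_\Xi}\;\cong\;\mathrm{Lie}(G)/\mathrm{Lie}(P_\Xi).
\]
From the root-space decomposition $\mathrm{Lie}(G)=\mathrm{Lie}(T)\oplus\bigoplus_{\alpha\in\RS}\mathfrak{g}_\alpha$ and the fact that, since $P_\Xi$ is generated by $B$ together with the reflections $s_\alpha$ for $\alpha\in\Xi$, $\mathrm{Lie}(P_\Xi)=\mathrm{Lie}(T)\oplus\bigoplus_{\alpha\in\RS^+\cup\RS^-_\Xi}\mathfrak{g}_\alpha$, the quotient has $T$-weights exactly $\RS^-\setminus\RS^-_\Xi=\RS^-_{\Pi/\Xi}$.

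\emph{General $w$.} Pick a lift $\dot w\in N_G(T)$ of $w$ and consider the automorphism $\lambda_{\dot w}\colon G/P_\Xi\to G/P_\Xi$ given by left multiplication by $\dot w$; it carries $eP_\Xi$ to $\dot wP_\Xi$ and satisfies
\[
\lambda_{\dot w}(t\cdot x)=(\dot w t\dot w^{-1})\cdot \lambda_{\dot w}(x)\qquad\text{for all }t\in T,\ x\in G/P_\Xi.
\]
Differentiating at $eP_\Xi$ gives a linear isomorphism $T_{eP_\Xi}(G/P_\Xi)\stackrel{\sim}{\to} T_{\dot w P_\Xi}(G/P_\Xi)$ which, up to the twist by $w$, intertwines the two $T$-actions: a weight-$\alpha$ vector on the source is sent to a vector of weight $w(\alpha)$ on the target (the $W$-action on characters being the usual one, $(w\cdot\alpha)(t)=\alpha(\dot w^{-1}t\dot w)$). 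Combined with the previous step, this gives the claimed weight multiset $\{w(\alpha)\mid\alpha\in\RS^-_{\Pi/\Xi}\}$ at $\dot wP_\Xi$, and independence of the choice of representative of $\bar w$ is exactly the statement of the cited \cite[Lemma~5.1]{CZZ2}.

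I do not expect any serious obstacle: the argument is essentially bookkeeping, the only delicate point being the consistent handling of the twist $w$ versus $w^{-1}$ when comparing the standard and twisted $T$-actions.
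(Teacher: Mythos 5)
Your proof is correct and takes essentially the same route as the paper's: both settle $w=e$ via the identification $T_{eP_\Xi}(G/P_\Xi)\cong\mathrm{Lie}(G)/\mathrm{Lie}(P_\Xi)$ and the root-space decomposition, and both handle general $w$ by left translation by a lift of $w$, tracking the resulting twist of the $T$-action by conjugation to turn weight $\alpha$ into weight $w(\alpha)$.
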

\begin{proof}
Consider the exact sequence of $T$-representations at the neutral
element $e\in G$
\[
0 \to \TB{P_\Xi,e} \to \TB{G,e} \to \TB{G/P_\Xi,e} \to 0
\]
(it is exact by local triviality of the right $P_{\Xi}$-torsor $G \to
G/P_\Xi$). By definition of the root system associated to $(G,T)$, the
roots $\RS$ are the characters of $\TB{G,e}$. By definition of the
parabolic subgroup $P_{\Xi}$, the characters of $\TB{P_\Xi,e}$ are
$\RS^+\sqcup \RS_\Xi^-$. This proves the lemma when $w=e$. For an arbitrary $w$, we consider the diagram
\[
\xymatrix{
\pt_e \ar[r]^{\incP{\Xi}^e} \ar[dr]_{\incP{\Xi}^{\bar e}} & G \ar[r]^{w\cdot} \ar[d] & G \ar[d] \\
 & G/P_{\Xi} \ar[r]^{w\cdot} & G/P_{\Xi} 
}
\] 
which is $T$-equivariant if $T$ acts by multiplication on the left on the right column and through conjugation by $w^{-1}$ and then by multiplication on the left on the left column. Since $\incP{\Xi}^{\bar{w}}$ is the bottom composite from $\pt_e$ to $G/P_\Xi$, the fiber of $\TB{G/P_\Xi}$ at $\bar{w}$ is isomorphic to its fiber at $e$, but for every character $\alpha$, the action of $T$ is now by $t(v) = \alpha (\bar{w}^{-1}t\bar{w})\cdot v = \alpha(w^{-1}(t))\cdot v = w(\alpha)(t)\cdot v$, in other words by the character $w(\alpha)$.
\end{proof}

\begin{prop} \label{points_prop}
We have $(\incP{\Xi}^{\bar{w}})^*(\incP{\Xi}^{\bar{w}'})_*(\one) = 0$ if $\bar{w} \neq \bar{w}' \in W/W_\Xi$ and 
\[
(\incP{\Xi}^{\bar{w}})^*(\incP{\Xi}^{\bar{w}})_*(\one)= \prod_{\alpha \in \RS^-_{\Pi/\Xi}}\hspace{-2ex} x_{w(\alpha)} = w (x_{\Pi/\Xi}).
\] 
\end{prop}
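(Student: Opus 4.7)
The plan is to derive the two formulas from the standard toolkit: transverse base change for the vanishing, and the self-intersection formula (Axiom~\ref{ChernTan_ax}) combined with Lemma~\ref{tangent_lem} for the diagonal term.

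For the first equality, when $\bar{w}\neq\bar{w}'$ the closed immersions $\incP{\Xi}^{\bar w}$ and $\incP{\Xi}^{\bar w'}$ hit disjoint $k$-points of $G/P_\Xi$, so the fibre product $\pt_{\bar w}\times_{G/P_\Xi}\pt_{\bar w'}$ is empty. This square is transversal in the sense of Axiom~1, hence the compatibility of push-forward with pull-back through a transversal square (the base-change formula) gives $(\incP{\Xi}^{\bar w})^{*}(\incP{\Xi}^{\bar w'})_{*}(\one)=0$.

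For the diagonal part, $\incP{\Xi}^{\bar w}\colon \pt_{\bar w}\hookrightarrow G/P_\Xi$ is a regular embedding of codimension $d=|\RS^-_{\Pi/\Xi}|$, and since the source is zero-dimensional, the normal bundle $\NB{\pt_{\bar w}/(G/P_\Xi)}$ coincides with the pull-back of the tangent bundle $(\incP{\Xi}^{\bar w})^*\TB{G/P_\Xi}$. By Axiom~\ref{ChernTan_ax} we therefore have
\[
(\incP{\Xi}^{\bar w})^{*}(\incP{\Xi}^{\bar w})_{*}(\one) \;=\; c^T_d\bigl((\incP{\Xi}^{\bar w})^*\TB{G/P_\Xi}\bigr).
\]
By Lemma~\ref{tangent_lem}, this $T$-equivariant vector bundle over $\pt_{\bar w}$ is the representation of $T$ with weights $\{w(\alpha)\mid \alpha\in\RS^-_{\Pi/\Xi}\}$, so it splits as a direct sum of the one-dimensional representations $L_{w(\alpha)}$. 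The Whitney sum formula, which for line bundles gives the ordinary product since no formal group law intervenes for direct sums, then yields
\[
c^T_d\Bigl(\bigoplus_{\alpha\in\RS^-_{\Pi/\Xi}} L_{w(\alpha)}\Bigr) \;=\; \prod_{\alpha\in\RS^-_{\Pi/\Xi}} c_1^T(L_{w(\alpha)}) \;=\; \prod_{\alpha\in\RS^-_{\Pi/\Xi}} x_{w(\alpha)},
\]
using Theorem~\ref{ShT_theo} to identify $c_1^T(L_{w(\alpha)})$ with $x_{w(\alpha)}\in \sS$. Finally, the $W$-action on $\sS=\RcF$ is induced from the $W$-action on $\cl$, so $w(x_\alpha)=x_{w(\alpha)}$, and the product may be rewritten as $w\bigl(\prod_{\alpha\in\RS^-_{\Pi/\Xi}} x_\alpha\bigr) = w(x_{\Pi/\Xi})$.

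There is no real obstacle here; the only point requiring attention is that Lemma~\ref{tangent_lem} already guarantees that the list of weights is independent of the choice of representative $w$ of $\bar w$, so the expression $\prod_{\alpha\in\RS^-_{\Pi/\Xi}} x_{w(\alpha)}$ is well defined even though it appears to depend on $w$ rather than $\bar w$. Equivalently, $x_{\Pi/\Xi}$ lies in $\sS^{W_\Xi}$, so $w(x_{\Pi/\Xi})$ depends only on the coset $\bar w$.
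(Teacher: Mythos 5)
Your proof is correct and follows the same approach as the paper: transverse base change through the empty scheme for the off-diagonal vanishing, and Axiom (A\ref{ChernTan_ax}) together with Lemma~\ref{tangent_lem} for the self-intersection term, splitting the normal bundle into line bundles and using the Whitney sum formula. You have simply spelled out the intermediate steps (the Whitney sum, the identification $c_1^T(L_{w(\alpha)})=x_{w(\alpha)}$ via Theorem~\ref{ShT_theo}) that the paper leaves implicit.
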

\begin{proof}
The case $\bar{w} \neq \bar{w}'$ holds by transverse base change through the empty scheme. Since the normal bundle to a point in $G/P_{\Xi}$ is the tangent bundle of $G/P_{\Xi}$ pulled back to that point, and since any $T$-representation splits into one-dimensional ones, the case $\bar{w}=\bar{w}'$ follows from (A\ref{ChernTan_ax}) using Lemma \ref{tangent_lem} to identify the characters. 
\end{proof}

\begin{rem}
Note that in the Borel case, the inclusion of an individual fixed point is local complete intersection as any other morphism between smooth varieties, but not ``global'' complete intersection, in the sense that it is not the zero locus of transverse sections of a globally defined vector bundle. Otherwise, for Chow groups, such a point would be in the image of the characteristic map as a product of first Chern classes, and it isn't for types for which the simply connected torsion index isn't $1$. Locally, on an open excluding other fixed points, it becomes such a product, as the previous proposition shows.  
\end{rem}

\begin{cor} \label{points_cor}
We have $\mainisoP{\Xi} (\incP{\Xi})^*(\incP{\Xi})_*(\one)=x_{\Pi/\Xi} \act \unit_{\Xi}$. 
\end{cor}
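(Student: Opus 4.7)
The plan is to compute both sides as concrete elements of $\SWPd{\Xi}$ and match them term by term, which reduces to a direct unpacking of Proposition \ref{points_prop} and the definition of the $\act$-action on $\SWPd{\Xi}$.

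First I would use the decomposition $(G/P_\Xi)^T = \amalg_{\bar w \in W/W_\Xi} \pt_{\bar w}$ together with additivity of the functor $\hh_T$ to identify $\hh_T((G/P_\Xi)^T) = \bigoplus_{\bar w}\hh_T(\pt_{\bar w})$, so the push-forward $(\incP{\Xi})_*$ splits as $\sum_{\bar w'} (\incP{\Xi}^{\bar w'})_*$ when applied to $\one$. Reading off the $\bar w$-component of $(\incP{\Xi})^*(\incP{\Xi})_*(\one)$ via $(\incP{\Xi}^{\bar w})^*$ therefore gives
\[
(\incP{\Xi}^{\bar w})^*(\incP{\Xi})_*(\one) = \sum_{\bar w' \in W/W_\Xi} (\incP{\Xi}^{\bar w})^*(\incP{\Xi}^{\bar w'})_*(\one) = w(x_{\Pi/\Xi}),
\]
where all terms with $\bar w' \neq \bar w$ vanish and the diagonal term is $w(x_{\Pi/\Xi})$ by Proposition \ref{points_prop}. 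Under $\mainisoP{\Xi}$, the resulting element of $\prod_{\bar w}\sS$ corresponds to $\sum_{\bar w \in W/W_\Xi} w(x_{\Pi/\Xi})\, f_{\bar w}$ in $\SWPd{\Xi}$.

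Next I would compute $x_{\Pi/\Xi}\act \unit_\Xi$ directly. Viewing $x_{\Pi/\Xi}\in \sS^{W_\Xi}$ as $x_{\Pi/\Xi}\delta_e \in \SW$ and using the definition of the $\act$-action via $(z\act f)(z') = f(z'z)$, one has in $\SWd$ the identity $(x_{\Pi/\Xi}\act f_w)(\delta_v) = f_w(v(x_{\Pi/\Xi})\delta_v) = v(x_{\Pi/\Xi})\Kr_{v,w}$, so $x_{\Pi/\Xi}\act f_w = w(x_{\Pi/\Xi})\, f_w$. By $W_\Xi$-invariance of $x_{\Pi/\Xi}$, the scalar $w(x_{\Pi/\Xi})$ depends only on the coset $\bar w$, so this formula descends to $x_{\Pi/\Xi}\act f_{\bar w} = w(x_{\Pi/\Xi})\, f_{\bar w}$ in $\SWPd{\Xi}$. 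Summing over $\bar w$ yields
\[
x_{\Pi/\Xi}\act \unit_\Xi = \sum_{\bar w \in W/W_\Xi} w(x_{\Pi/\Xi})\, f_{\bar w},
\]
which matches the expression obtained above.

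There is no real obstacle: the content is entirely contained in Proposition \ref{points_prop}, and the only point requiring minor care is checking that the abstract $\act$-action of the $W_\Xi$-invariant element $x_{\Pi/\Xi}$ on $\SWPd{\Xi}$ agrees with the naive formula $f_{\bar w}\mapsto w(x_{\Pi/\Xi})\, f_{\bar w}$, which is immediate from $W_\Xi$-invariance and the product formula in $\SW$.
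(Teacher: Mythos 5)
Your proposal is correct and follows the same route as the paper's proof: decompose $\incP{\Xi}$ into the individual fixed-point inclusions $\incP{\Xi}^{\bar w}$, apply Proposition \ref{points_prop} to kill the off-diagonal terms and identify the diagonal ones as $w(x_{\Pi/\Xi})$, and then recognize $\sum_{\bar w} w(x_{\Pi/\Xi}) f_{\bar w}$ as $x_{\Pi/\Xi}\act\unit_\Xi$. The paper treats the last identification as immediate from the definition of $\act$, whereas you spell out the verification (including why $W_\Xi$-invariance makes the formula well-defined on $\SWPd{\Xi}$), which is a harmless and mildly clarifying elaboration.
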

\begin{proof}
Since $\incP{\Xi} = \bigsqcup_{\bar w \in W/W_\Xi} \incP{\Xi}^{\bar w}$, we have
\begin{align*}
\mainisoP{\Xi}(\incP{\Xi})^*(\incP{\Xi})_*(\one) & 
= \mainisoP{\Xi}\bigg( \sum_{\bar v,\, \bar w \in W/W_\Xi} \hspace{-2ex} (\incP{\Xi}^{\bar v})^*(\incP{\Xi}^{\bar w})_*(\one)\bigg) 
= \mainisoP{\Xi}\bigg(\sum_{\bar w \in W/W_\Xi} \hspace{-2ex} w \big(x_{\Pi/\Xi}\big) \one_{\pt_{\bar w}}\bigg) \\
& = \sum_{\bar w \in W/W_\Xi} \hspace{-2ex} w \big( x_{\Pi/\Xi} \big) f_{\bar w}  = x_{\Pi/\Xi}\act \unit_\Xi.\qedhere
\end{align*}
\end{proof}

\section{Bott-Samelson classes} \label{BS_sec}

In the present section we describe the Bott-Samelson classes  in the $T$-equivariant cohomology
of $G/P_\Xi$.

\medskip

Let $\Xi \subseteq \Pi$ as before.
For each $\bar{w} \in W/W_\Xi$ consider 
the $B$-orbit $BwP_\Xi/P_\Xi$ of the point in $G/P_\Xi$ corresponding to $\bar{w}$. 
It is isomorphic to the affine
space $\AA^{l(v)}$ where $v\in W^\Xi$ is the representative of $\bar{w}$ of
minimal length $l(v)$.
Its closure
$\overline{BwP_{\Xi}/P_\Xi}$ is
called the Schubert variety at $\bar{w}$ with
respect to $\Xi$ and is denoted by $\Schub{\Xi}{\bar{w}}$.  
If $\Xi=\emptyset$, we write $\Schub{}{w}$ for
$\Schub{\emptyset}{w}$. 
Moreover, by Bruhat decomposition the closed complement of $BwP_\Xi/P_\Xi$ is the union of Schubert varieties $\Schub{\Xi}{\bar{u}}$ with $\bar{u}<\bar{w}$ for the Bruhat order on $W/W_{\Xi}$. 
For any $w \in W$, the projection map $G/B \to G/P_{\Xi}$ induces a
projective map $\Schub{}{w} \to \Schub{\Xi}{w}$. Moreover, if $w \in W^\Xi$, then the projection $\Schub{}{w} \to \Schub{\Xi}{w}$ is (projective and) birational. 

\medskip

The variety $\Schub{\Xi}{\bar{w}}$ is not smooth in general, but it admits nice desingularizations, that we now recall, following \cite{Dem74}. 
Given a sequence of simple reflections $I=(s_1,\ldots,s_l)$ corresponding to simple roots $(\alpha_1,\ldots,\alpha_l)$, the Bott-Samelson desingularization of $\Schub{}{I}$ is defined as
\[
\BottSam{I} = P_{\alpha_1} \times^B P_{\alpha_2} \times^B \cdots \times^B P_{\alpha_l}/B
\]
where $\times^B$ means the quotient by the action of $B$ given on
points by $b\cdot (x,y)=(xb^{-1},by)$. By definition, the
multiplication of all factors induces a map $q_I\colon \BottSam{I} \to
G/B$ which factors through a map $\mu_I\colon \BottSam{I} \to \Schub{}{w(I)}$ where $w(I)=s_1\cdots s_l$. 
It is easy to see that if $I'=(s_1,\ldots,s_{l-1})$, the diagram
\begin{equation} \label{BottDiag_eq}
\xymatrix{
\BottSam{I} \ar[r]^{q_I} \ar[d]_{p'} & G/B \ar[d]^{\projal{\alpha_l}} \\
\BottSam{I'} \ar[r]^-{\projal{\alpha_l}\circ q_{I'}} & G/P_{\alpha_l}
}
\end{equation}
is cartesian, when $p'$ is projection on the first $l-1$ factors. By induction on $l$, the variety $\BottSam{I}$ is smooth projective and the morphism $\mu_I$ is projective.
When furthermore $I$ is a reduced decomposition of $w(I)$, meaning
that it is of minimal length among the sequences $J$ such that
$w(J)=w(I)$, the map $\mu_I$ is birational (still by Bruhat decomposition).
We can compose this map with the projection to get a map $\BottSam{w} \to \Schub{\Xi}{\bar{w}}$ and thus when $w \in W^\Xi$, we obtain a (projective birational) desingularization $\BottSam{w}\to \Schub{\Xi}{\bar{w}}$. 
It shows that, $G/P_{\Xi}$ has a cellular decomposition with
desingularizations, as considered just before \cite[Thm.~8.8]{CPZ}, with cells indexed by elements of $W/W_{\Xi}$.

\begin{rem} \label{TequivSchub_rem}
The flag varieties, the Schubert varieties, their Bott-Samelson
desingularizations and the various morphisms between them that we have
just introduced are all $B$-equivariant when $B$ acts on the left, and
therefore are $T$-equivariant. 
\end{rem}

\begin{defi}
Let $q_I^{\Xi}=\pi_{\Xi} \circ q_I$, let $\BSP{\Xi}{I}$ be the push-forward $(q_I^\Xi)_*(\one)$ in $\hh_T(G/P_{\Xi})$, and let $\BS{I}=\BSP{\emptyset}{I}$ in $\hh_T(G/B)$. 
\end{defi}

Note that by definition, we have $(\pi_{\Xi})_*(\BS{I})= \BSP{\Xi}{I}$.

\begin{lem} \label{gendesing_lem}
For any choice of reduced sequences $\{I_w\}_{w \in W^\Xi}$, the classes $\BSP{\Xi}{I_w}$ generate $\hh_T(G/P_{\Xi})$ as an $\sS$-module.
\end{lem}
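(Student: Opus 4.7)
My plan is to argue by induction along the Bruhat stratification of $G/P_\Xi$, using the localization sequence (A\ref{loc_ax}) and homotopy invariance (A\ref{hominv_ax}), adapting the strategy of \cite[Thm.~8.8]{CPZ} to the equivariant setting.

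Fix a linear extension $w_1 < \cdots < w_N$ of the Bruhat order on $W^\Xi$, so that each $X_i := \bigcup_{j\le i} \Schub{\Xi}{\bar w_j}$ is closed in $G/P_\Xi$, with $X_0=\emptyset$, $X_N=G/P_\Xi$, and $X_i\setminus X_{i-1} = C_i := Bw_iP_\Xi/P_\Xi \cong \AA^{l(w_i)}$. Let $U_i = G/P_\Xi\setminus X_{i-1}$, so $C_i\hook U_i$ is closed with open complement $U_{i+1}$. The $T$-action on $C_i$ is linear, so by homotopy invariance $\hh_T(C_i)\cong \hh_T(\pt)=\sS$ is free of rank one on $\one$, and the localization sequence reads
\[
\sS\cong\hh_T(C_i)\tooby{(j_i)_*}\hh_T(U_i)\tooby{u_{i+1}^*}\hh_T(U_{i+1})\to 0,
\]
where $j_i\colon C_i\hook U_i$ and $u_{i+1}\colon U_{i+1}\hook U_i$.

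The crucial identification is $\BSP{\Xi}{I_{w_i}}|_{U_i} = (j_i)_*(\one)$. To prove it, pull $q^\Xi_{I_{w_i}}\colon \BottSam{I_{w_i}}\to G/P_\Xi$ back along the open immersion $U_i\hook G/P_\Xi$; compatibility of push-forward with pull-back along flat maps (axiom A1) yields $\BSP{\Xi}{I_{w_i}}|_{U_i} = (q^\Xi_{I_{w_i}}|_{V_i})_*(\one)$, where $V_i = (q^\Xi_{I_{w_i}})^{-1}(U_i)$. Since the image of $q^\Xi_{I_{w_i}}$ is $\Schub{\Xi}{\bar w_i}$ and $\Schub{\Xi}{\bar w_i}\cap U_i = C_i$, this restricted map factors as $V_i\to C_i\hook U_i$. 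Because $I_{w_i}$ is reduced and $w_i\in W^\Xi$, the composition $\BottSam{I_{w_i}}\to\Schub{}{w_i}\to\Schub{\Xi}{\bar w_i}$ is birational and restricts to an isomorphism $V_i\isoto C_i$ over the top cell: this combines the standard fact that $\mu_{I_{w_i}}$ is an isomorphism over $Bw_iB/B$ with the observation that minimality of $w_i$ in its $W_\Xi$-coset forces $Bw_iB/B$ to be the only cell of $\Schub{}{w_i}$ that maps to $C_i$ (any $v\le w_i$ with $vW_\Xi = w_iW_\Xi$ must satisfy $v=w_i$ by the length additivity $l(w_iv') = l(w_i)+l(v')$). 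Consequently $(q^\Xi_{I_{w_i}}|_{V_i})_*(\one) = (j_i)_*(\one)$.

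A descending induction on $i$ completes the argument. For $i=N+1$ there is nothing to prove since $U_{N+1}=\emptyset$. Assuming that $\{\BSP{\Xi}{I_{w_j}}|_{U_{i+1}}\}_{j>i}$ generate $\hh_T(U_{i+1})$ over $\sS$, the classes $\BSP{\Xi}{I_{w_j}}|_{U_i}$ with $j>i$ restrict via $u_{i+1}^*$ to those generators by compatibility of restrictions; together with the image of $(j_i)_*$, which by the previous paragraph is the $\sS$-submodule spanned by $\BSP{\Xi}{I_{w_i}}|_{U_i}$, they span $\hh_T(U_i)$ by exactness. Taking $i=1$ yields the desired generation of $\hh_T(G/P_\Xi)$. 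The principal technical hurdle is the isomorphism $V_i\isoto C_i$, resting on the classical properties of Bott-Samelson desingularizations together with the minimality of $w_i\in W^\Xi$.
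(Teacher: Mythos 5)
Your proof is correct and follows essentially the same approach as the paper. The paper's own proof simply notes that the argument of \cite[Thm.~8.8]{CPZ} goes through equivariantly (using only homotopy invariance and localization), and your write-up spells out precisely that Bruhat-cell induction, including the key check that the Bott--Samelson map restricts to an isomorphism over the top cell $C_i$ when $I_{w_i}$ is reduced and $w_i\in W^\Xi$.
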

\begin{proof}
The proof of \cite[Theorem 8.8]{CPZ} goes through when $\hh$ is replaced by $\hh_T$, since all morphisms involved are $T$-equivariant; it only uses homotopy invariance and localization.
\end{proof}

Let $V_0$ (resp. $V_\alpha$) be the $1$-dimensional representation of $T$ corresponding to the $0$ (resp. $\alpha$) character. Let $\sigma_0$ and $\sigma_\alpha$ be the inclusions of $T$-fixed points corresponding to $V_0$ and $V_\alpha$ in $\PP(V_0 \oplus V_\alpha)$ as in the setting of Section \ref{torusP1_sec}. 

Consider the projection $\pi_\alpha\colon G/B\to G/P_\alpha$.
Given an element $w\in W$, with image $\bar{w}$ in $W/W_{\alpha}$ and
any lifting $w'$ of $w$ in $G$, the fiber over the fixed point
$\incP{\alpha}^{\bar{w}}\colon \pt_{\bar w} \to G/P_{\alpha}$ is $w'P_\alpha/B$.

\begin{lem} \label{P1GP_lem}
There is a $T$-equivariant isomorphism $w' P_\alpha /B \simeq
\PP(V_0\oplus V_{-w(\alpha)})$, such that the the closed fixed point
$\imath^w\colon \pt_w \to w'P_\alpha /B \hookrightarrow G/B$
(resp. $\imath^{ws_\alpha}$) is sent to $\sigma_0\colon \pt \to \PP(V_0 \oplus V_{-w(\alpha)})$ (resp. to $\sigma_{-w(\alpha)}$). 
\end{lem}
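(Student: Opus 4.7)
The plan is to first establish the base case $w=e$, identifying $P_\alpha/B$ with $\PP(V_0\oplus V_{-\alpha})$ as $T$-varieties, and then obtain the general case by left-translation by $w'$.

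For $w=e$: since $\alpha$ is simple, $P_\alpha=B\sqcup Bs_\alpha B$ and $\mathrm{Lie}(P_\alpha)=\mathfrak{b}\oplus\mathfrak{g}_{-\alpha}$, so $P_\alpha/B$ is a smooth projective curve with exactly two $T$-fixed points, $eB$ and $s_\alpha B$, and the tangent $T$-representation at $eB$ is $\mathfrak{g}_{-\alpha}$, i.e.\ the character $-\alpha$. On the other hand, $\PP(V_0\oplus V_{-\alpha})$ is a $T$-equivariant $\PP^1$ with tangent character $-\alpha$ at $\sigma_0$ and $\alpha$ at $\sigma_{-\alpha}$. A smooth projective $T$-equivariant curve with exactly two $T$-fixed points is, once a fixed point is chosen to play the role of $\sigma_0$, determined up to $T$-equivariant isomorphism by the tangent character at that point: indeed, it is necessarily a $\PP^1$, and the classification of $T$-linearizations of line bundles on $\PP^1$ shows that the projectivization is determined by the two fixed-point characters. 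Hence we obtain a $T$-equivariant isomorphism $P_\alpha/B\simeq \PP(V_0\oplus V_{-\alpha})$ with $eB\mapsto \sigma_0$ and $s_\alpha B\mapsto \sigma_{-\alpha}$.

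For general $w$: the map $f\colon P_\alpha/B\to w'P_\alpha/B$ given by $xB\mapsto w'xB$ is an isomorphism of varieties. It is not $T$-equivariant for the natural left $T$-actions, but becomes so if we twist the $T$-action on the source via conjugation by $w'$, i.e.\ $t\cdot xB:=(w'^{-1}tw')xB$. Under this twist, a one-dimensional $T$-representation $V_\lambda$ becomes $V_{w(\lambda)}$, since for $v\in V_\lambda$ the twisted action is $t\cdot v=\lambda(w'^{-1}tw')v=(w(\lambda))(t)v$, using the Weyl action convention $(w(\lambda))(t)=\lambda(w^{-1}tw)$ which is consistent with the action on $\cl$ recalled in Section \ref{algcomb_sec}. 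Applying this twist to the base case sends $V_0\oplus V_{-\alpha}$ to $V_0\oplus V_{-w(\alpha)}$, and composing with $f$ yields the desired $T$-equivariant isomorphism $w'P_\alpha/B\simeq \PP(V_0\oplus V_{-w(\alpha)})$, under which $\pt_w=w'B=f(eB)\mapsto \sigma_0$ and $\pt_{ws_\alpha}=w's_\alpha B=f(s_\alpha B)\mapsto \sigma_{-w(\alpha)}$.

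The main obstacle is the base case, specifically pinning down the $T$-equivariant isomorphism type of $P_\alpha/B$ purely from tangent data. One clean way to bypass the abstract classification is to build the isomorphism directly via the Levi decomposition: the rank-one semisimple subgroup $L_\alpha^{ss}\subseteq P_\alpha$ is isogenous to $SL_2$ and satisfies $L_\alpha^{ss}/(L_\alpha^{ss}\cap B)\simeq P_\alpha/B$ $T$-equivariantly, so projectivizing a two-dimensional representation of $L_\alpha^{ss}\cdot T$ whose $T$-weights are exactly $0$ and $-\alpha$ (such a representation exists after a suitable character twist) furnishes the required $T$-equivariant embedding, which must be an isomorphism for dimension reasons.
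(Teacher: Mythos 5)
Your main argument is correct, and its skeleton is the same as the paper's: the reduction to $w'=e$ by transporting the $T$-action through conjugation by $w'$, with the convention $w(\lambda)(t)=\lambda(w^{-1}tw)$ turning $V_\lambda$ into $V_{w(\lambda)}$, is exactly the paper's first step. You differ in the base case: the paper identifies the adjoint semisimple quotient of $P_\alpha$ with $\mathrm{PGL}_2$, uses $P_\alpha/B\simeq \mathrm{PGL}_2/B_{\mathrm{PGL}_2}\simeq\PP^1$, and computes the torus action on that $\PP^1$ explicitly (the map $T\to\Gm$ being $\pm\alpha$), whereas you compute the tangent characters at the two rational fixed points ($-\alpha$ at $eB$ since $\TB{P_\alpha/B,e}=\mathfrak{g}_{-\alpha}$, matching $-\alpha$ at $\sigma_0$) and invoke rigidity of $T$-actions on a smooth projective curve with two fixed points. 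That rigidity statement is true, but your justification via ``classification of $T$-linearizations of line bundles'' is loose; the clean one-line proof is that such a curve is $\PP^1$ and the action homomorphism $T\to\mathrm{Aut}(\PP^1)=\mathrm{PGL}_2$, fixing two rational points, lands in the corresponding maximal torus, so the action is $[x:y]\mapsto[\chi(t)x:y]$ with $\chi$ read off from the tangent character --- which is essentially the same $\mathrm{PGL}_2$ computation the paper performs via the quotient of $P_\alpha$. So the two routes buy the same thing; yours trades the explicit identification of the quotient group for a standard rigidity fact about $\PP^1$.

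The proposed ``clean bypass'' via the Levi, however, does not work as stated: there is no two-dimensional representation of $L^{ss}_\alpha\cdot T$ whose $T$-weights are exactly $0$ and $-\alpha$, because the weight multiset of any representation of this group must be stable under $s_\alpha$, and $s_\alpha$ sends $\{0,-\alpha\}$ to $\{0,\alpha\}$. Even asking only for weights $\lambda,\lambda-\alpha$ (whose projectivization carries the desired $T$-action) forces $\alpha^\vee(\lambda)=1$, and such a $\lambda\in\cl$ need not exist: for $G=\mathrm{PGL}_2$ the characters of $T$ are multiples of $\alpha$, so $P_\alpha/B=G/B$ is not the projectivization of any two-dimensional representation of $P_\alpha$ (or of $L^{ss}_\alpha\cdot T$), even after a character twist. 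Since this was offered only as an optional alternative, it does not undermine your proof, but the correct fix is precisely to let the rank-one group act projectively through its adjoint quotient $\mathrm{PGL}_2$, as in the paper.
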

\begin{proof}
Multiplication on the left by $w'$ defines an isomorphism $P_\alpha/B \to w'P_\alpha/B$ and it is $T$-equivariant if $T$ acts by multiplication on the left on $w'P_\alpha/B$ and through conjugation by $(w')^{-1}$ and then by multiplication on the left on $P_\alpha/B$. Thus, we can reduce to the case where $w'=e$: the general case follows by replacing the character $\alpha$ by $w(\alpha)$. 

First, let us observe that $\mathrm{PGL}_2$ acts on the projective space $\PP^1$ by projective transformations, i.e.
\[
\overline{\begin{pmatrix}t & b \\ c & d\end{pmatrix}}[x:y] = [tx+by:cx+dy]
\] 
with its Borel subgroup $B_{\mathrm{PGL}_2}$ of upper triangular matrices fixing the point $[1:0]$, which therefore gives an identification $\mathrm{PGL}_2/B_{\mathrm{PGL}_2} \simeq \PP^1$. So, its maximal torus $\Gm$ of matrices such that $b=c=0$ and $d=1$ acts by $t[x:y]=[tx:y]=[x:t^{-1}y]$. Thus, as a $\Gm$-variety, this $\PP^1$ is actually $\PP(V_1\oplus V_{0}) \simeq \PP(V_0 \oplus V_{-1})$.

The adjoint semi-simple quotient of $P_\alpha$ is of rank one, so it is isomorphic to $\mathrm{PGL}_2$. The maximal torus $T$ maps to a maximal torus $\Gm$ and the Borel $B$ to a Borel in this $\mathrm{PGL}_2$. Up to modification of the isomorphism by a conjugation, we can assume that this Borel of $\mathrm{PGL}_2$ is indeed $B_{\mathrm{PGL_2}}$ as above. The map $T \to \Gm$ is $\pm \alpha$ (the sign depends on how the maximal torus of $\mathrm{PGL_2}$ is identified with $\Gm$). Since $P_\alpha/B \simeq \mathrm{PGL}_2/B_{\mathrm{PGL}_2}$, we are done by the $\mathrm{PGL}_2$ case.
\end{proof}

Recall the notation from section \ref{algcomb_sec}.
\begin{lem} \label{pushpulli_lem}
The following diagram commutes.
\[
\xymatrix{
\hh_T(G/B) \ar[d]_{\projal{\alpha}^* (\projal{\alpha})_*} \ar[r]^{\inc^*} & \hh_T(W) \ar[r]^{\mainiso}_{\simeq} & \SWd \ar@{}[r]|-{\subseteq} & \QWd \ar[d]^{\aA_{\alpha}} \\
\hh_T(G/B) \ar[r]^{\inc^*} & \hh_T(W) \ar[r]^{\mainiso}_{\simeq} & \SWd \ar@{}[r]|-{\subseteq} & \QWd \\
}
\]
\end{lem}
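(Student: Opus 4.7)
The plan is to verify the commutativity fiber-by-fiber, by reducing each $T$-fixed point computation to the $\mathbb{P}^1$ computation performed in Section~\ref{torusP1_sec}. The key point is that the projection $\projal{\alpha}\colon G/B \to G/P_\alpha$ is a $\PP^1$-bundle whose fibers over $T$-fixed points are precisely the $T$-equivariant projective lines described by Lemma~\ref{P1GP_lem}.

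For each $\bar w \in W/W_\alpha$, choose a representative $w\in W$. Let $j_{\bar w}\colon \projal{\alpha}^{-1}(\pt_{\bar w}) \hookrightarrow G/B$ be the fiber inclusion and $p_{\bar w}\colon \projal{\alpha}^{-1}(\pt_{\bar w}) \to \pt_{\bar w}$ the structural map; by Lemma~\ref{P1GP_lem}, we have a $T$-equivariant identification $\projal{\alpha}^{-1}(\pt_{\bar w}) \simeq \PP(V_0\oplus V_{-w(\alpha)})$ under which the two $T$-fixed points $\imath^w$ and $\imath^{ws_\alpha}$ correspond to $\sigma_0$ and $\sigma_{-w(\alpha)}$ respectively. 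Since $\projal{\alpha}\circ\imath^w = \incP{\alpha}^{\bar w}$, transverse base change (axiom \ref{loc_ax}, more precisely the compatibility of push-forwards and pull-backs on the cartesian square with horizontal arrows $\incP{\alpha}^{\bar w}$ and $j_{\bar w}$, vertical arrows $p_{\bar w}$ and $\projal{\alpha}$) gives
\[
(\imath^w)^* \projal{\alpha}^* (\projal{\alpha})_*(u) = (\incP{\alpha}^{\bar w})^* (\projal{\alpha})_*(u) = (p_{\bar w})_* j_{\bar w}^*(u).
\]
Applying Lemma~\ref{compP1_lem} with the identification above (so that the character $\alpha$ of Section~\ref{torusP1_sec} is $-w(\alpha)$ here) yields
\[
(\imath^w)^* \projal{\alpha}^* (\projal{\alpha})_*(u) = \tfrac{(\imath^w)^* u}{x_{-w(\alpha)}} + \tfrac{(\imath^{ws_\alpha})^* u}{x_{w(\alpha)}}.
\]

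On the algebraic side, write $f = \mainiso \inc^*(u) \in \SWd \subseteq \QWd$, so that $f(\delta_v) = (\imath^v)^*(u)$ for every $v\in W$. Using the product rule $q\delta_w\cdot q'\delta_{w'} = q\, w(q')\, \delta_{ww'}$ in $\QW$ and the definition $Y_\alpha = \tfrac{1}{x_{-\alpha}} + \tfrac{1}{x_\alpha}\delta_{s_\alpha}$, we get
\[
\delta_w Y_\alpha = \tfrac{1}{x_{-w(\alpha)}}\delta_w + \tfrac{1}{x_{w(\alpha)}}\delta_{ws_\alpha},
\]
hence by definition of $\act$,
\[
\bigl(\aA_\alpha f\bigr)(\delta_w) = f(\delta_w Y_\alpha) = \tfrac{f(\delta_w)}{x_{-w(\alpha)}} + \tfrac{f(\delta_{ws_\alpha})}{x_{w(\alpha)}} = \tfrac{(\imath^w)^* u}{x_{-w(\alpha)}} + \tfrac{(\imath^{ws_\alpha})^* u}{x_{w(\alpha)}}.
\]

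Comparing the two expressions, the values of $\mainiso\inc^*\projal{\alpha}^*(\projal{\alpha})_*(u)$ and of $\aA_\alpha \mainiso \inc^*(u)$ agree on every $\delta_w$, so they are equal in $\QWd$ (and in fact in $\SWd$, as the left-hand side already lies there). The main obstacle is bookkeeping: getting the characters and signs right in the identification of Lemma~\ref{P1GP_lem} and matching them with the conventions of Section~\ref{torusP1_sec}. Once this is set up, both the geometric and algebraic sides collapse to the same two-term sum, and the lemma follows.
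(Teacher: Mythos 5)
Your proposal is correct and follows essentially the same route as the paper: restrict to the fiber over each $T$-fixed point of $G/P_\alpha$, identify it with $\PP(V_0\oplus V_{-w(\alpha)})$ via Lemma \ref{P1GP_lem}, use base change along the cartesian square, and invoke the $\PP^1$ computation of Lemma \ref{compP1_lem}, your evaluation of functionals at each $\delta_w$ merely replacing the paper's block decomposition $\QWd=\bigoplus_{w\in W^\alpha}(\qQ f_w\oplus \qQ f_{ws_\alpha})$. Only note that the base-change step is justified by the axiom of compatibility of push-forwards with pull-backs for transversal squares (applicable since $\projal{\alpha}$ is flat), not by the localization axiom (A\ref{loc_ax}), and that the regularity of $x_{w(\alpha)}$ needed for Lemma \ref{compP1_lem} is the standing $\RS$-regularity assumption on $\sS$.
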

\begin{proof}
In view of Lemma \ref{P1GP_lem}, the strategy is to reduce to the case of Lemma \ref{compP1_lem} by restricting to the fiber over one fixed point of $G/P_\alpha$ at a time.

We decompose $\QWd = \bigoplus_{w \in W^\alpha} (\qQ\cdot f_w \oplus \qQ \cdot f_{ws_\alpha})$ and note that $\aA_\alpha$ preserves this decomposition since 
\[
\aA_\alpha(f_w)=\frac{1}{x_{-w(\alpha)}}(f_w + f_{ws_\alpha}), \hspace{5ex}\aA_\alpha(f_{ws_\alpha})=\frac{1}{x_{w(\alpha)}}(f_w + f_{ws_\alpha}) 
\]
and $A_\alpha$ is $\qQ$-linear. It therefore suffices to check the commutativity of the diagram after extending both rows on the right by a projection $\QWd \to\qQ\cdot f_w \oplus \qQ \cdot f_{ws_\alpha}$, for all $w \in W^\alpha$. But then, the composite horizontal maps $\hh_T(G/B) \to \qQ\cdot f_w \oplus \qQ \cdot f_{ws_\alpha}$ factor as
\[
\hh_T(G/B) \to \hh_T(P_\alpha w B/B) \to \hh_T(\pt) \oplus \hh_T(\pt) {\simeq} \sS \oplus \sS \subseteq \sS[\tfrac{1}{x_{w(\alpha)}}] \oplus \sS[\tfrac{1}{x_{w(\alpha)}}] \subseteq \qQ \oplus \qQ.
\]
Using proper base change on the diagram
\[
\xymatrix{
G/B \ar[d]_{\projal{\alpha}} & w' P_\alpha/B \ar[d] \ar@{_{(}->}[l]_-{}\\
G/P_\alpha & \pt \ar@{_{(}->}[l]_-{\imath^{\bar{w}}_\alpha}
}
\]
and identifying $w'P_\alpha/ B$ with $\PP(V_0 \oplus V_{-w(\alpha)})$ by Lemma \ref{P1GP_lem}, we are reduced to proving the commutativity of 
\[
\xymatrix{
\hh_T\big(\PP(V_0\oplus V_{-w(\alpha)})\big) \ar[d]_{p^*p_*} \ar[r]^-{\sigma^*} & \hh_T(\pt) \oplus \hh_T(\pt) & \sS \oplus \sS \ar[l]_-{\simeq} \ar@{}[r]|-{\subseteq} & \sS[\tfrac{1}{x_{w(\alpha)}}] \oplus \sS[\tfrac{1}{x_{w(\alpha)}}] \ar[d]^{A_\alpha} \\
\hh_T\big(\PP(V_0\oplus V_{-w(\alpha)})\big) \ar[r]^-{\sigma^*} & \hh_T(\pt) \oplus \hh_T(\pt) & \sS \oplus \sS \ar[l]_-{\simeq} \ar@{}[r]|-{\subseteq} & \sS[\tfrac{1}{x_{w(\alpha)}}] \oplus \sS[\tfrac{1}{x_{w(\alpha)}} ]}
\]
which immediately reduces to the diagram of Lemma \ref{pushpulli_lem} followed by an obvious commutative diagram involving pull-backs 
\[
\xymatrix{
\hh_T\big(\PP(V_0\oplus V_{-w(\alpha)})\big)  \ar[r]^-{\sigma^*} & \hh_T(\pt) \oplus \hh_T(\pt) & \sS \oplus \sS \ar[l]_-{\simeq} \ar@{}[r]|-{\subseteq} & \sS[\tfrac{1}{x_{w(\alpha)}}] \oplus \sS[\tfrac{1}{x_{w(\alpha)}}] \\
\hh_T(\pt) \ar[u]^{p^*} \ar[ur]_{\Delta} &  & \sS \ar[ll]_-{\simeq} \ar@{}[r]|-{\subseteq} \ar[u]_{\Delta} & \sS[\tfrac{1}{x_{w(\alpha)}}] \ar[u]_{\Delta} \\
}
\]
in which $\Delta$ is the diagonal morphism.
\end{proof}

\begin{lem} \label{BSclasses_lem}
For any sequence $I=(i_1,\ldots,i_l)$, the Bott-Samelson class $\BS{I} \in \hh_T(G/B)$ maps to 
\[
\mainiso \circ \inc^*(\BS{I}) = \aA_{I^{\rev}}\big(x_\Pi\cdot f_e\big)
\] 
in $\SWd$. 
\end{lem}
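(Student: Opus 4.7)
The plan is to argue by induction on the length $l$ of the sequence $I$, exploiting the Cartesian diagram~\eqref{BottDiag_eq} to reduce the length by one at each step.

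For the base case $l=0$, the space $\BottSam{\emptyset}$ is a single point and $q_\emptyset$ is the inclusion $\inc^e$ of the $T$-fixed point at the identity coset. I would invoke Proposition~\ref{points_prop} with $\Xi=\emptyset$, which gives $(\inc^w)^*(\inc^e)_*(\one)=\Kr_{w,e}\,x_\Pi$, hence $\mainiso\circ \inc^*(\BS{\emptyset})=x_\Pi\cdot f_e=\aA_\emptyset(x_\Pi\cdot f_e)$, as desired (with $\aA_\emptyset$ interpreted as the identity operator).

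For the inductive step, set $I'=(i_1,\ldots,i_{l-1})$ and $\alpha=\alpha_{i_l}$. Since $\projal{\alpha}\colon G/B\to G/P_\alpha$ is a smooth $\PP^1$-bundle, hence flat, the square~\eqref{BottDiag_eq} is transversal. Transverse base change combined with the trivial identity $(p')^*(\one)=\one$ will then yield
\[
\BS{I}=(q_I)_*(p')^*(\one)=\projal{\alpha}^*(\projal{\alpha}\circ q_{I'})_*(\one)=\projal{\alpha}^*(\projal{\alpha})_*\BS{I'}.
\]
Next I would apply $\mainiso\circ\inc^*$ to both sides and use Lemma~\ref{pushpulli_lem} to commute past $\projal{\alpha}^*(\projal{\alpha})_*$, obtaining
\[
\mainiso\circ\inc^*(\BS{I})=\aA_{\alpha}\bigl(\mainiso\circ\inc^*(\BS{I'})\bigr).
\]
The inductive hypothesis applied to $I'$ together with the factorization $\aA_{I^{\rev}}=\aA_{\alpha}\circ\aA_{I'^{\rev}}$, valid because $I^{\rev}=(i_l,i_{l-1},\ldots,i_1)$, will then close the induction.

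I do not anticipate any serious obstacle here: the whole argument is a clean induction combining transverse base change with Lemma~\ref{pushpulli_lem}. The only point deserving a bit of attention is the transversality of~\eqref{BottDiag_eq}, which is immediate from the flatness of $\projal{\alpha}$. The reversal convention $I\mapsto I^{\rev}$ is the combinatorial counterpart of the observation that it is the last factor $P_{\alpha_{i_l}}$ of $\BottSam{I}$ that gets ``peeled off'' first in the inductive reduction, producing the outermost application of $\aA_{\alpha_{i_l}}$.
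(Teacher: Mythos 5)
Your proof is correct and follows essentially the same route as the paper: an induction on $l$ using the Cartesian square~\eqref{BottDiag_eq} with transverse base change to reduce to $\BS{I'}$, with Proposition~\ref{points_prop} handling the base case and Lemma~\ref{pushpulli_lem} carrying the inductive step through to $\aA_{\alpha}$. The paper merely compresses the induction into the single formula $\BS{I}=\projal{\alpha_{i_l}}^*(\projal{\alpha_{i_l}})_*\circ\cdots\circ\projal{\alpha_{i_1}}^*(\projal{\alpha_{i_1}})_*\circ(\inc^e)_*(\one)$ before invoking the same two ingredients.
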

\begin{proof}
By induction using diagram \eqref{BottDiag_eq}, we have \[
\BS{I}=\projal{\alpha_{i_l}}^*(\projal{\alpha_{i_l}})_*\circ \cdots
\circ \projal{\alpha_{i_1}}^*(\projal{\alpha_{i_1}})_*\circ
(\inc^e)_*(\one).
\] 
Since $\mainiso \inc^*(\inc^e)_*(\one)= x_\Pi\cdot f_e$ by Proposition~\ref{points_prop}, the conclusion follows from Lemma~\ref{pushpulli_lem}.
\end{proof}

\section{Equivariant cohomology of a flag variety} \label{equivcoh_sec}

In the present section we describe the $T$-equivariant cohomology of an arbitrary split flag variety $G/P_\Xi$.

\medskip

First, consider the complete flag variety $G/B$.
\begin{prop} \label{SchubertBasis_prop}
For any choice of reduced decompositions $(I_w)_{w \in W}$, the family $(\BS{I_w})_{w \in W}$ form a basis of $\hh_T(G/B)$ over $\sS=\hh_T(\pt)$. 
\end{prop}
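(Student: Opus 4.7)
The plan is to combine the generation statement of Lemma \ref{gendesing_lem} (with $\Xi=\emptyset$), which already ensures that the $\BS{I_w}$ span $\hh_T(G/B)$ as an $\sS$-module, with an injection into $\SWd$ that detects $\sS$-linear independence. Concretely, I use the pull-back $\mainiso \circ \inc^*\colon \hh_T(G/B) \to \SWd$ together with the explicit formula of Lemma \ref{BSclasses_lem}, namely
\[
\mainiso \circ \inc^*(\BS{I_w}) \,=\, \aA_{I_w^{\rev}}\bigl(x_\Pi \cdot f_e\bigr) \,=\, Y_{I_w^{\rev}}\act (x_\Pi \act f_e) \,\in\, \DcFd.
\]

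Reversing a reduced decomposition of $w$ gives a reduced decomposition of $w^{-1}$, so $\{I_w^{\rev}\}_{w \in W}$ is, after the reindexing $w \mapsto w^{-1}$, again a choice of reduced decompositions for the elements of $W$. I then invoke the algebraic fact that $\{Y_{J_w}\}_{w \in W}$ is an $\sS$-basis of $\DcF$ for any such choice. This parallels the statement for $\{X_{J_w}\}$ from \cite[Prop.~7.7]{CZZ} and follows from it by a triangularity argument in the Bruhat order, using $Y_\alpha = \kp_\alpha - X_\alpha$ with $\kp_\alpha \in \sS$ together with the straightening rules governing how $\sS$-scalars commute past the $X_\alpha$: expanding the product $Y_{J_w}$ yields $\pm X_{J_w}$ modulo an $\sS$-combination of $X_{J}$'s coming from strictly shorter subwords of $J_w$, indexed by elements $w' < w$ in Bruhat order.

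Combining this with the rank-one freeness of $\DcFd$ over $\DcF$ with generator $x_\Pi \act f_e$ from \cite[Thm.~10.13]{CZZ2} shows that the images $\{Y_{I_w^{\rev}} \act (x_\Pi \act f_e)\}_{w \in W}$ form an $\sS$-basis of $\DcFd$, and in particular are $\sS$-linearly independent inside $\SWd$. Hence the $\BS{I_w}$ are $\sS$-linearly independent in $\hh_T(G/B)$, and since they already generate, they constitute an $\sS$-basis. The main subtlety is the triangularity argument promoting the $X$-basis to a $Y$-basis of $\DcF$; this is purely algebraic in nature and should be available, explicitly or implicitly, in the companion paper \cite{CZZ2}.
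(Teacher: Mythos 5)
Your proof is correct and follows essentially the same route as the paper: generation from Lemma \ref{gendesing_lem} combined with $\sS$-linear independence of the pulled-back classes $\aA_{I_w^{\rev}}(x_\Pi\cdot f_e)$ inside $\SWd$, via Lemma \ref{BSclasses_lem}. The only difference is that the paper cites \cite[Theorem 12.4]{CZZ2} directly for the statement that these elements form a basis of $\DcFd$, whereas you unpack that result via \cite[Thm.~10.13]{CZZ2} (rank-one freeness of $\DcFd$ over $\DcF$) together with a triangularity argument promoting the $X_{I_w}$-basis of $\DcF$ to a $Y_{I_w}$-basis.
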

\begin{proof}
By Lemma \ref{BSclasses_lem}, the element $\BS{I_w}$ pulls-back to $\aA_{I^{\rev}}\big(x_\Pi\cdot f_e\big)$ in $\SWd$ and these are linearly independent over $\sS$ by \cite[Theorem 12.4]{CZZ2}. They generate $\hh_T(G/B)$ by Lemma \ref{gendesing_lem}. 
\end{proof}

\begin{theo} \label{identificationB_theo}
The pull-back map to fixed points $\inc^*:\hh_T(G/B) \to \hh_T(W)$ is injective, and the isomorphism $\mainiso: \hh_T(W) \simeq \SWd$, identifies its image to $\DcFd \subseteq \SWd$. 
\end{theo}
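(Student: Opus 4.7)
The plan is to combine three ingredients already in place: Proposition~\ref{SchubertBasis_prop}, which provides an $\sS$-basis of $\hh_T(G/B)$; Lemma~\ref{BSclasses_lem}, which computes the image of this basis under $\mainiso \circ \inc^*$; and \cite[Theorem 12.4]{CZZ2}, which furnishes an $\sS$-basis of $\DcFd$. The strategy is to show that $\mainiso\circ\inc^*$ maps a basis of $\hh_T(G/B)$ bijectively onto a basis of $\DcFd$ inside $\SWd$, which will yield both the injectivity of $\inc^*$ and the identification of its image in one stroke.

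First I would fix a choice of reduced decompositions $\{I_w\}_{w\in W}$ and invoke Proposition~\ref{SchubertBasis_prop} to get that $\{\BS{I_w}\}_{w\in W}$ is an $\sS$-basis of $\hh_T(G/B)$. Then Lemma~\ref{BSclasses_lem} identifies the image of $\BS{I_w}$ under $\mainiso\circ\inc^*$ with $\aA_{I_w^{\rev}}(x_\Pi \cdot f_e) = Y_{I_w^{\rev}} \act (x_\Pi \act f_e)$ in $\SWd$. Since each $Y_\alpha$ lies in $\DcF$ (cf.\ \cite[Lemma 10.12]{CZZ2}) and $x_\Pi \act f_e$ is a free $\DcF$-module generator of $\DcFd$, as recalled in Section~\ref{algcomb_sec}, these images automatically lie in $\DcFd \subseteq \SWd$, so the map $\mainiso\circ\inc^*$ factors through the inclusion $\DcFd \hookrightarrow \SWd$.

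Next I would invoke \cite[Theorem 12.4]{CZZ2} to conclude that the family $\{\aA_{I_w^{\rev}}(x_\Pi\cdot f_e)\}_{w\in W}$ is itself an $\sS$-basis of $\DcFd$ (this is the same result that was used for the linear independence half of Proposition~\ref{SchubertBasis_prop}, now used for its full basis content). Combining the two previous steps shows that $\mainiso\circ\inc^*$ is an $\sS$-linear map sending an $\sS$-basis of $\hh_T(G/B)$ bijectively onto an $\sS$-basis of $\DcFd$, hence it is an isomorphism of $\sS$-modules onto $\DcFd$. Since $\mainiso$ is already an isomorphism, it follows that $\inc^*$ is injective and that $\mainiso$ identifies $\inc^*(\hh_T(G/B))$ with $\DcFd$ inside $\SWd$.

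The only non-routine step is the invocation of \cite[Theorem 12.4]{CZZ2}; the geometric content has been fully concentrated in Lemma~\ref{BSclasses_lem}, which itself rests on the $\PP^1$-computation of Section~\ref{torusP1_sec} propagated through the Bott--Samelson projections via Lemma~\ref{pushpulli_lem}. What might initially appear to be the main obstacle---matching the geometric pull-back-to-fixed-points construction against the algebraic push-pull combinatorics defining $\DcFd$---has thus been dismantled in the preceding sections, leaving the proof of the theorem itself as a clean basis-comparison argument.
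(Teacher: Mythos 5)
Your proposal is correct and follows essentially the same route as the paper: the paper's proof likewise combines Proposition~\ref{SchubertBasis_prop} (Bott--Samelson classes form an $\sS$-basis of $\hh_T(G/B)$, which already relies on Lemma~\ref{BSclasses_lem}) with the fact from \cite[Theorem 12.4]{CZZ2} that the elements $\aA_{I_w^{\rev}}(x_\Pi\cdot f_e)$ form an $\sS$-basis of $\DcFd$ inside $\SWd$, so the basis-to-basis comparison gives both injectivity and the identification of the image. Your extra remark that the images land in $\DcFd$ via $Y_\alpha\in\DcF$ is fine but redundant, since \cite[Theorem 12.4]{CZZ2} already places them there.
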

\begin{proof}
This follows from Lemma \ref{SchubertBasis_prop} and the fact that the $\aA_{I^{\rev}}\big(x_\Pi\cdot f_e\big)$ form a basis of $\DcFd$ as a submodule of $\SWd$, still by \cite[Theorem 12.4]{CZZ2}.
\end{proof}

\begin{rem}
We do not know a direct geometric proof that $\hh_T(G/B)$ injects into $\hh_T((G/B)^T)$, which is of course well known for Chow groups or $K$-theory. Tracking back this fact in the proof of the previous theorem, one notices that it makes full use of the algebraic description of these cohomology groups, in other words, that the map $\DcFd \to \SWd$ is indeed an injection by \cite[Lemma 10.2]{CZZ2}.
\end{rem}

\begin{cor} \label{lociso_cor}
The pull-back map $\inc^*: \hh_T(G/B) \to \hh_T(W)$ becomes an isomorphism after localization at the multiplicative subset generated by all $x_\alpha$ where $\alpha$ is a root. 
\end{cor}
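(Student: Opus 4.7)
The plan is to deduce this directly from Theorem~\ref{identificationB_theo} together with the algebraic fact, already recorded in Section~\ref{algcomb_sec}, that inverting all $x_\alpha$ ($\alpha\in\RS$) collapses $\SW$ and $\DcF$ to the same thing, namely $\QW$. So the strategy is: first identify the map in question with the inclusion $\DcFd \hookrightarrow \SWd$, and then observe that this inclusion becomes the identity $\QWd = \QWd$ after tensoring with $Q$ over $\sS$.

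More precisely, by Theorem~\ref{identificationB_theo} (via the isomorphism $\mainiso$) the pull-back $\inc^*$ is identified with the inclusion $\eta^\star\colon \DcFd \hookrightarrow \SWd$, so it suffices to check that this inclusion becomes an isomorphism after the base change $-\otimes_\sS Q$. On the target side, $\SW$ is $\sS$-free with basis $\{\delta_w\}_{w\in W}$, hence $Q\otimes_\sS \SW \simeq \QW$, and taking $Q$-duals gives a canonical identification $Q\otimes_\sS \SWd \simeq \QWd$. On the source side, $\DcF$ is also a free $\sS$-module (of the same rank $|W|$, by \cite[Prop.~7.7]{CZZ}), and the inclusion $\DcF\hookrightarrow \QW$ becomes an isomorphism after inverting the $x_\alpha$, so similarly $Q\otimes_\sS \DcF \simeq \QW$ and therefore $Q\otimes_\sS \DcFd \simeq \QWd$. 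Since the two identifications are compatible with $\eta^\star$, the map $\DcFd\otimes_\sS Q \to \SWd\otimes_\sS Q$ is the identity on $\QWd$, hence an isomorphism.

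There is no real obstacle here; everything needed is already in place. The only point that requires minimal care is that localization commutes with $\Hom_\sS(-,\sS)$ on finitely generated free $\sS$-modules, which is why freeness of $\SW$ and $\DcF$ over $\sS$ enters the argument. Finally, localization of $\hh_T(W)=\prod_{w\in W}\sS$ at $\{x_\alpha\mid\alpha\in\RS\}$ is just the finite product $\prod_{w\in W}Q$, matching $\QWd$ under $\mainiso$, so the statement of the corollary is precisely the isomorphism just obtained.
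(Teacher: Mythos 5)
Your proof is correct and follows essentially the same route as the paper: reduce via Theorem~\ref{identificationB_theo} to the algebraic inclusion $\DcFd\hookrightarrow\SWd$ becoming an isomorphism after inverting the $x_\alpha$. The paper cites \cite[Lemma~10.2]{CZZ2} for that fact, whereas you unpack the underlying argument (freeness of $\SW$ and $\DcF$ over $\sS$, compatibility of localization with $\Hom_\sS(-,\sS)$ on finite free modules), which is also sketched in Section~\ref{algcomb_sec}.
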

\begin{proof}
After localization at this subset, the inclusion $\DcFd \subseteq \SWd$ becomes an isomorphism (see \cite[Lemma 10.2]{CZZ2}).
\end{proof}
\begin{lem}The following diagram commutes
\[
\xymatrix{
\hh_T(W)\ar[d]_{\simeq}^{\mainiso} \ar[r]^{\inc_*} & \hh_T(G/B)\ar@{^(->}[r]^{\inc^*}\ar[d]_{\simeq}^{\mainiso} & \hh_T(W)\ar[d]_{\simeq}^{\mainiso} \\
\SWd\ar[r]^{x_\Pi\act (-)} & \DcFd\ar@{^(->}[r]^{\eta^*} &\SWd
}
\]
\end{lem}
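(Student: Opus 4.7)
The strategy is to prove the two squares commute separately, with the left one being the real content.

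The right square commutes essentially by the construction of the identification in Theorem~\ref{identificationB_theo}: the map $\mainiso$ on the middle $\hh_T(G/B)$ is defined as the composition $\mainiso \circ \inc^*$ landing inside $\DcFd \subseteq \SWd$, and $\eta^*$ is the inclusion $\DcFd \hookrightarrow \SWd$ on duals of the inclusion $\SW \subseteq \DcF$. So this square is essentially tautological.

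For the left square, the plan is to first verify the commutativity of the outer rectangle (\ie left composed with right), and then deduce the commutativity of the left square from the injectivity of $\eta^*$. To verify the outer rectangle, let $f \in \hh_T(W)$ and write $f = \sum_{w\in W} a_w \cdot \one_{\pt_w}$ with $a_w \in \sS$. Since each $\pt_w$ is a point, $a_w$ is the pullback to $\pt_w$ of an element of $\hh_T(\pt)$, so the projection formula gives $\inc_*(f) = \sum_{w} a_w \cdot (\inc^w)_*(\one)$ in $\hh_T(G/B)$. Pulling back along $\inc^v$ and applying Proposition~\ref{points_prop} (the Borel case, $\Xi=\emptyset$), only the $v=w$ term survives and we obtain $(\inc^v)^* \inc_*(f) = a_v \cdot v(x_\Pi)$. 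Therefore
\[
\mainiso \circ \inc^* \circ \inc_*(f) = \sum_{v\in W} a_v \, v(x_\Pi)\, f_v.
\]
On the other hand, recalling that the action of $s \in \sS$ on $\SWd$ is by $s \act f_v = v(s)\, f_v$, we get
\[
x_\Pi \act \mainiso(f) = \sum_{v\in W} a_v\, (x_\Pi \act f_v) = \sum_{v \in W} a_v\, v(x_\Pi)\, f_v,
\]
matching the previous expression.

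Finally, to pass from the outer rectangle to the left square, we use that by the commutativity of the right square, $\mainiso \circ \inc^* = \eta^* \circ \mainiso$, so what we have shown reads $\eta^* \circ \mainiso \circ \inc_* = \eta^* \circ (x_\Pi \act -) \circ \mainiso$. Now $\mainiso \circ \inc_*$ lands in $\DcFd$ (as the image of $\mainiso$ on $\hh_T(G/B)$ via the right square), and $x_\Pi \act \mainiso(f)$ lies in $\DcFd$ by \cite[Theorem 10.13]{CZZ2}. Since $\eta^*\colon \DcFd \hookrightarrow \SWd$ is injective (\cite[Lemma 10.2]{CZZ2}), we conclude $\mainiso \circ \inc_* = (x_\Pi \act -) \circ \mainiso$, as required. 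The main obstacle, which is easily resolved, is simply the bookkeeping of projection formulas and the dualized $\act$-action; no new geometric input is needed beyond Proposition~\ref{points_prop}.
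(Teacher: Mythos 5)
Your proof is correct and follows essentially the same route as the paper. The paper's proof is a one-line citation of Corollary~\ref{points_cor} and Theorem~\ref{identificationB_theo}; you simply unwind what that means, computing $\inc^*\inc_*$ on a general element of $\hh_T(W)$ via Proposition~\ref{points_prop} (from which Corollary~\ref{points_cor} is derived), observing that the right square is tautological from the construction of $\mainiso$ on $\hh_T(G/B)$ in Theorem~\ref{identificationB_theo}, and then cancelling the injective $\eta^\star$ to isolate the left square — exactly the intended argument, spelled out.
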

\begin{proof}This follows from Corollary \ref{points_cor} and Theorem \ref{identificationB_theo}.
\end{proof}

We now consider an arbitrary flag variety $G/P_\Xi$.

\begin{lem} \label{pushBorel_lem}
The following diagram commutes.
\[
\xymatrix{
\hh_T(G/B) \ar[d]_{(\projP{\Xi})_*} \ar[r]^{\inc^*} & \hh_T(W) \ar[r]^{\mainiso}_{\simeq} & \SWd  \ar@{}[r]|-{\subseteq} & \QWd \ar[d]^{\cA_{\Xi}} \\
\hh_T(G/P_\Xi) \ar[r]^{\incP{\Xi}^*} & \hh_T(W/W_\Xi) \ar[r]^-{\mainisoP{\Xi}}_-{\simeq} & \SWPd{\Xi} \ar@{}[r]|-{\subseteq} & \QWPd{\Xi} \\
}
\]
\end{lem}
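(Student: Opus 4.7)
The plan is to apply $\sprojB{\Xi}^\star$ to the right-hand column of the diagram, using its injectivity, to reduce the claim to an equality of maps $\hh_T(G/B)\to\SWd$, and then verify that equality componentwise via transverse base change along $\projP{\Xi}$ combined with an Atiyah-Bott-style localization on each fiber of $\projP{\Xi}$ over a $T$-fixed point.

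Combining functoriality of pull-backs (from $\projP{\Xi}\circ\inc=\incP{\Xi}\circ\rho_\Xi$), the identification $\mainiso\circ\rho_\Xi^*=\sprojB{\Xi}^\star\circ\mainisoP{\Xi}$ from \eqref{finitepush_eq}, and the compatibility $\sprojB{\Xi}^\star\circ\cA_\Xi=\aA_\Xi$ (the $\Xi'=\emptyset$ case of the commutative square in Section~\ref{algcomb_sec}, since $\sprojB{\emptyset}^\star=\id$), the injectivity of $\sprojB{\Xi}^\star$ (it sends $f_{\hat w}$ to the disjoint-support sum $\sum_{\bar v\in\hat w}f_{\bar v}$) reduces the claim to the identity
\[
\mainiso\circ\inc^*\circ\projP{\Xi}^*\circ(\projP{\Xi})_{*}\;=\;\aA_\Xi\circ\mainiso\circ\inc^*
\]
of maps $\hh_T(G/B)\to\SWd$, where $\aA_\Xi=Y_\Xi\act$.

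I would then evaluate both sides at $\delta_v$ for each $v\in W$. On the left, the transverse base change square (transverse by flatness of the smooth map $\projP{\Xi}$)
\[
\xymatrix{
vP_\Xi/B \ar[d]_{p_v} \ar[r]^{j_v} & G/B \ar[d]^{\projP{\Xi}} \\
\pt_{\bar v} \ar[r]^-{\incP{\Xi}^{\bar v}} & G/P_\Xi
}
\]
together with the identity $\projP{\Xi}\circ\inc^v=\incP{\Xi}^{\bar v}$ gives the $v$-coefficient as $(p_v)_{*}(j_v^{*}(x))$. On the right, a direct computation with $Y_\Xi=\sum_{u'\in W_\Xi}\delta_{u'}\tfrac{1}{x_\Xi}$ yields $v$-coefficient $\sum_{u\in vW_\Xi}u(\tfrac{1}{x_\Xi})\cdot(\inc^u)^*(x)$.

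Thus the remaining task is the Atiyah-Bott-style localization formula
\[
(p_v)_{*}(j_v^{*}(x))\;=\;\sum_{u\in vW_\Xi}u\!\left(\tfrac{1}{x_\Xi}\right)\cdot(\inc^u)^{*}(x).
\]
To prove it, let $\iota_u\colon\pt_u\hookrightarrow vP_\Xi/B$ denote the inclusion of the fixed point $u\in vW_\Xi$, so that $\inc^u=j_v\circ\iota_u$, and observe that $vP_\Xi/B$ is, after twisting the $T$-action by conjugation by $v$, equivariantly the full flag variety of the Levi quotient of $P_\Xi$. Applying Theorem~\ref{identificationB_theo} and Corollary~\ref{lociso_cor} to this Levi gives an injection $\hh_T(vP_\Xi/B)\hookrightarrow\bigoplus_{u\in vW_\Xi}\hh_T(\pt_u)$ that becomes an isomorphism after inverting $x_\alpha$ for $\alpha\in\RS_\Xi$. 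By a twisted version of Lemma~\ref{tangent_lem}, the tangent weights at $u$ in $vP_\Xi/B$ are $u(\RS^-_\Xi)$, so the self-intersection formula~(A\ref{ChernTan_ax}) gives $\iota_u^{*}(\iota_u)_{*}(\one)=u(x_\Xi)$. Expanding $j_v^*(x)$ in the resulting localized basis $\{(\iota_u)_{*}(\one)\}_{u\in vW_\Xi}$ and applying $(p_v)_{*}$ (using $p_v\circ\iota_u=\id_\pt$) produces the formula after localization, and since both sides lie in $\sS$ it descends. The main obstacle is carrying out this fiber-wise Atiyah-Bott argument cleanly while keeping track of the $v$-twist; once it is in place, the rest is bookkeeping.
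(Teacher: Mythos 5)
Your reduction is sound, and it is worth noting that the identity you reduce to is exactly Corollary \ref{pushpullBorel_cor}: postcomposing with the injective $\sprojB{\Xi}^\star$, using \eqref{finitepush_eq}, the functoriality $\projP{\Xi}\circ\inc=\incP{\Xi}\circ\fprojal{\Xi}$, and $\sprojB{\Xi}^\star\circ\cA_\Xi=\aA_\Xi$, does convert the lemma into the push--pull square, so you prove the corollary first and deduce the lemma --- the reverse of the paper's order, which is logically fine. Your componentwise computations are also correct: the $v$-coefficient of the left side is $(p_v)_*j_v^*$ by transversal base change along the flat morphism $\projP{\Xi}$, and the $v$-coefficient of $\aA_\Xi(\mainiso\inc^*(x))$ is indeed $\sum_{u\in vW_\Xi}u(1/x_\Xi)\,(\inc^u)^*(x)$.

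The soft spot is the fiber-wise Atiyah--Bott step, which you yourself flag. To expand $j_v^*(x)$ in the classes $(\iota_u)_*(\one)$ you need the pull-back to fixed points of $\hh_T(v'P_\Xi/B)$ to become an isomorphism (at least surjective) after inverting $\{x_\alpha\}_{\alpha\in\RS_\Xi}$. You invoke Theorem \ref{identificationB_theo} and Corollary \ref{lociso_cor} ``for the Levi'', but these results (and their algebraic inputs from \cite{CZZ2}) are stated for a split \emph{semisimple} group with maximal torus $T$, whereas the Levi of $P_\Xi$ is reductive with non-semisimple root datum $(\RS_\Xi\subseteq\cl)$; the standing assumption of Section \ref{algcomb_sec} excludes this case, so this step requires an independent justification (for instance redoing Lemma \ref{gendesing_lem} and the localized fixed-point comparison for the fiber, or relating $\hh_T$ of the fiber to the derived subgroup of the Levi). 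The gap is avoidable, and this is what the paper does: instead of expanding an arbitrary class fiber by fiber, check the (reduced or unreduced) identity on the $Q$-basis $\{(\inc^w)_*(\one)\}_{w\in W}$ of $Q\otimes_\sS\hh_T(G/B)$, which is available from Corollary \ref{lociso_cor} and Proposition \ref{points_prop} for $G$ itself; then $(\projP{\Xi})_*(\inc^w)_*=(\incP{\Xi}^{\bar w})_*$ together with Proposition \ref{points_prop} applied to both $G/B$ and $G/P_\Xi$ and the formula $\cA_\Xi(w(x_\Pi)f_w)=w(x_{\Pi/\Xi})f_{\bar w}$ finishes the proof in one line, with no analysis of the fibers of $\projP{\Xi}$ and no appeal to results beyond their stated scope.
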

\begin{proof}
After tensoring the whole diagram with $\qQ$ over $\sS$, the morphism $\inc^*$ becomes an isomorphism by Corollary \ref{lociso_cor}. The family $\big((\inc^w)_*(\one)\big)_{w \in W}$ is a $\qQ$-basis of $\qQ \otimes_\sS \hh_T(G/B)$, since by Proposition~\ref{points_prop}, $\mainiso\circ \inc^*\circ (\inc^w)_*(\one)$ is $f_w$ multiplied by an element that is invertible (in $\qQ$). It therefore suffices to check the equality of both compositions in the diagram when applied to all $(\inc^w)_*(\one)$ with $w \in W$: 
\[
\begin{split}
\cA_{\Xi}\circ\mainiso\circ \inc^*\circ (\inc^w)_*(\one) & =  \cA_{\Xi}(w(x_\Pi)f_w) = w(x_\Pi) \cA_{\Xi}(f_w) \\
 & \overset{(*)}= w(x_{\Pi/\Xi} ) f_{\bar{w}} = \mainisoP{\Xi} (\incP{\Xi})^* (\incP{\Xi}^{\bar{w}})_*(\one) = \mainisoP{\Xi} (\incP{\Xi})^* (\projP{\Xi})_* (\inc^{w})_*(\one) \\
\end{split}
\]
where equality $(*)$ follows from the definition of $\cA_{\Xi}$.
\end{proof}

\begin{cor} \label{pushpullBorel_cor}
The following diagram commutes.
\[
\xymatrix{
\hh_T(G/B) \ar[d]_{(\projP{\Xi})^*(\projP{\Xi})_*} \ar[r]^{\inc^*} & \hh_T(W) \ar[r]^{\mainiso}_{\simeq} & \SWd \ar@{}[r]|-{\subseteq} & \QWd \ar[d]^{\aA_{\Xi}} \\
\hh_T(G/B) \ar[r]^{\inc^*} & \hh_T(W) \ar[r]^{\mainiso}_{\simeq} & \SWd \ar@{}[r]|-{\subseteq} & \QWd \\
}
\]
\end{cor}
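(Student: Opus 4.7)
The plan is to factor the left vertical composite $(\projP{\Xi})^* (\projP{\Xi})_*$ into its two pieces, interpret each piece on the fixed-point side, and recognize that the composite matches $\aA_\Xi$ via the compatibility between $\cA_\Xi$ and $A_\Xi$ already established algebraically in section \ref{algcomb_sec}.

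First, I would observe that there is a commutative square
\[
\xymatrix{
W \ar[r]^{\inc} \ar[d]_{\fprojal{\Xi}} & G/B \ar[d]^{\projP{\Xi}} \\
W/W_\Xi \ar[r]^{\incP{\Xi}} & G/P_\Xi
}
\]
because the $T$-fixed points of $G/B$ lying over a given fixed point $\bar w \in W/W_\Xi$ of $G/P_\Xi$ are exactly the preimages of $\bar w$ under the natural projection $W \to W/W_\Xi$. Contravariance of pull-back therefore gives $\inc^* \circ (\projP{\Xi})^* = \fprojal{\Xi}^* \circ \incP{\Xi}^*$. Combining with the right-hand identity in \eqref{finitepush_eq} (in the case $\Xi'=\emptyset$), we then have $\mainiso \circ \inc^* \circ (\projP{\Xi})^* = \sprojB{\Xi}^\star \circ \mainisoP{\Xi} \circ \incP{\Xi}^*$.

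Next, I would invoke Lemma \ref{pushBorel_lem} to translate the push-forward side: it gives $\mainisoP{\Xi} \circ \incP{\Xi}^* \circ (\projP{\Xi})_* = \cA_\Xi \circ \mainiso \circ \inc^*$. Plugging this into the previous equality yields
\[
\mainiso \circ \inc^* \circ (\projP{\Xi})^*(\projP{\Xi})_* = \sprojB{\Xi}^\star \circ \cA_\Xi \circ \mainiso \circ \inc^*.
\]

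Finally, I would invoke the algebraic commutative square from the end of section \ref{algcomb_sec} (taking $\Xi'=\emptyset$, so that $\sprojB{\emptyset}^\star$ is the identity), which says precisely that $\sprojB{\Xi}^\star \circ \cA_\Xi = \aA_\Xi$ as maps $\QWd \to \QWd$ (with image landing in $(\QWd)^{W_\Xi}$). Substituting this gives the desired equality $\mainiso \circ \inc^* \circ (\projP{\Xi})^*(\projP{\Xi})_* = \aA_\Xi \circ \mainiso \circ \inc^*$, which is exactly commutativity of the corollary's diagram. No serious obstacle is expected: the corollary is really just the composite of Lemma \ref{pushBorel_lem} with the pull-back compatibility \eqref{finitepush_eq} and the already-proved algebraic compatibility between the geometric-style operator $\cA_\Xi$ and the Weyl-invariant-style operator $\aA_\Xi$.
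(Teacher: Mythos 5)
Your proof is correct and follows essentially the same route as the paper: first establish the pull-back square via \eqref{finitepush_eq}, then compose with the push-forward square of Lemma \ref{pushBorel_lem}. Your only addition is that you make explicit the identity $\sprojB{\Xi}^\star \circ \cA_\Xi = \aA_\Xi$ (the $\Xi'=\emptyset$ case of the diagram at the end of section \ref{algcomb_sec}), which the paper uses silently when ``combining'' the two diagrams.
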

\begin{proof}
Using equation \eqref{finitepush_eq}, one easily checks the commutativity of diagram involving pull-backs 
\[
\xymatrix{
\hh_T(G/B)  \ar[r]^{\inc^*} & \hh_T(W) \ar[r]^{\mainiso}_{\simeq} & \SWd \ar@{}[r]|-{\subseteq} & \QWd \\
\hh_T(G/P_\Xi) \ar[u]^{\projP{\Xi}^*} \ar[r]^{\inc^*} & \hh_T(W/W_\Xi) \ar[r]^-{\mainisoP{\Xi}}_-{\simeq} & \SWPd{\Xi} \ar@{}[r]|-{\subseteq} & \QWPd{\Xi} \ar[u]_{\sprojB{\Xi}^\star} \\
}
\]
where $\sprojB{\Xi}^\star$ is the sum over orbits: $\sprojB{\Xi}^\star(f_{\bar{w}})=\sum_{\bar{v}=\bar{w}} f_v$. The result follows from the combination of this diagram and the one of Lemma \ref{pushBorel_lem}. 
\end{proof}

\begin{lem} \label{BSPclasses_lem}
For any sequence $I=(i_1,\ldots,i_l)$, the Bott-Samelson class $\BSP{\Xi}{I} \in \hh_T(G/P_\Xi)$ maps to 
\[
\mainiso \circ (\incP{\Xi})^*(\BSP{\Xi}{I}) = \cA_\Xi\circ\aA_{I^{\rev}}\big(x_\Pi f_e\big)
\] 
in $\SWd$.
\end{lem}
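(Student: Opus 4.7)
The statement is a direct parabolic upgrade of Lemma~\ref{BSclasses_lem}, and the plan is to obtain it by composing that lemma with the compatibility between push-forward along $\projP{\Xi}$ and the algebraic push-forward $\cA_\Xi$ that was established in Lemma~\ref{pushBorel_lem}.

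First, I would unwind the definition: since $q_I^\Xi = \projP{\Xi}\circ q_I$, functoriality of push-forwards gives
\[
\BSP{\Xi}{I} \;=\; (q_I^\Xi)_*(\one) \;=\; (\projP{\Xi})_* (q_I)_*(\one) \;=\; (\projP{\Xi})_*(\BS{I}).
\]
Thus $\mainisoP{\Xi}\circ (\incP{\Xi})^*(\BSP{\Xi}{I}) = \mainisoP{\Xi}\circ (\incP{\Xi})^*\circ (\projP{\Xi})_*(\BS{I})$.

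Next, I would apply the commutative square from Lemma~\ref{pushBorel_lem}, which identifies $\mainisoP{\Xi}\circ (\incP{\Xi})^*\circ (\projP{\Xi})_*$ with $\cA_\Xi\circ \mainiso\circ \inc^*$ on $\hh_T(G/B)$. Substituting gives
\[
\mainisoP{\Xi}\circ (\incP{\Xi})^*(\BSP{\Xi}{I}) \;=\; \cA_\Xi\bigl(\mainiso\circ \inc^*(\BS{I})\bigr).
\]
Finally, Lemma~\ref{BSclasses_lem} computes the inner term as $\mainiso\circ \inc^*(\BS{I}) = \aA_{I^{\rev}}(x_\Pi f_e)$, which yields the claimed identity $\cA_\Xi\circ \aA_{I^{\rev}}(x_\Pi f_e)$.

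There is no real obstacle here: the whole content has been carefully packaged into the two previous lemmas, so the proof reduces to the observation that Bott-Samelson classes on $G/P_\Xi$ are push-forwards of the corresponding ones on $G/B$ along $\projP{\Xi}$. The only point that deserves an explicit mention is that the codomain in the formula is really $\SWPd{\Xi}$ (rather than $\SWd$), via the inclusion $\SWPd{\Xi}\subseteq \QWPd{\Xi}$ in the target of $\cA_\Xi$ from Lemma~\ref{pushBorel_lem}.
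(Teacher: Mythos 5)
Your proposal is correct and follows exactly the paper's own argument: write $\BSP{\Xi}{I}=(\projP{\Xi})_*(\BS{I})$, apply the commutative square of Lemma~\ref{pushBorel_lem}, and conclude with Lemma~\ref{BSclasses_lem}. Nothing is missing; this is the same three-step composition the paper uses.
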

\begin{proof}
We have
\[
\mainiso (\incP{\Xi})^* (\BSP{\Xi}{I_w}) 
= \mainiso (\incP{\Xi})^* (\projP{\Xi})_*(\BS{I_w})
= \cA_{\Xi}\circ \mainiso \circ\inc^* (\BS{I_w}) = \cA_{\Xi}\circ \aA_{I_w^{\rev}}\big(x_\Pi f_e\big)
\]
using Lemma \ref{pushBorel_lem} and Lemma \ref{BSclasses_lem} for the last two equalities.
\end{proof}

\begin{prop} \label{basisGP_prop}
For any choice of reduced decompositions $(I_w)_{w \in W^{\Xi}}$ for elements minimal in their $W_\Xi$-cosets, the classes $\BSP{\Xi}{I_w}$ form an $\sS$-basis of $\hh_T(G/P_{\Xi})$. 
\end{prop}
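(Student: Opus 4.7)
The plan is to combine the generation statement already established with an $\sS$-linear independence argument obtained by pulling back to $T$-fixed points. By Lemma~\ref{gendesing_lem}, the family $(\BSP{\Xi}{I_w})_{w \in W^\Xi}$ generates $\hh_T(G/P_\Xi)$ as an $\sS$-module, so only linear independence remains to be shown.

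The key step is to transfer the question into $\SWPd{\Xi}$ via the composition
\[
\hh_T(G/P_\Xi) \xrightarrow{(\incP{\Xi})^*} \hh_T(W/W_\Xi) \xrightarrow{\mainisoP{\Xi}} \SWPd{\Xi}.
\]
By Lemma~\ref{BSPclasses_lem}, this composition sends $\BSP{\Xi}{I_w}$ to $\cA_\Xi \circ \aA_{I_w^{\rev}}(x_\Pi f_e)$. Hence it suffices to prove that the family $\bigl(\cA_\Xi \circ \aA_{I_w^{\rev}}(x_\Pi f_e)\bigr)_{w \in W^\Xi}$ is $\sS$-linearly independent in $\SWPd{\Xi}$, which is a purely algebraic/combinatorial statement.

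For that last statement, I would invoke the parabolic analogue of \cite[Theorem~12.4]{CZZ2} used in the Borel case (Proposition~\ref{SchubertBasis_prop}), which asserts that $(\aA_{I_w^{\rev}}(x_\Pi f_e))_{w \in W}$ is an $\sS$-basis of $\DcFd$. The commutative square from section~\ref{algcomb_sec} relating $\cA_\Xi$ to $A_\Xi$ via the isomorphism $\sprojB{\Xi}^\star\colon \DcFPd{\Xi} \isoto (\DcFd)^{W_\Xi}$, combined with the $W_\Xi$-averaging behavior of $A_\Xi$ on Schubert-type elements indexed by $W$, reduces the desired independence of the subfamily indexed by $W^\Xi$ to the known independence of the full family in $\DcFd$; equivalently, these images form an $\sS$-basis of $\DcFPd{\Xi}$.

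The main obstacle is the purely algebraic step: that the restriction to $W^\Xi$, after applying $\cA_\Xi$, still yields a basis of $\DcFPd{\Xi}$ and not merely a generating set. Once that combinatorial input is secured from \cite{CZZ2}, the geometric conclusion follows immediately by combining generation (Lemma~\ref{gendesing_lem}) with injectivity of the map $\sS\text{-span of the }\BSP{\Xi}{I_w} \to \SWPd{\Xi}$ afforded by Lemma~\ref{BSPclasses_lem}.
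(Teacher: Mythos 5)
Your proposal is correct and follows essentially the same route as the paper: generation from Lemma~\ref{gendesing_lem}, transfer to the fixed-point ring via Lemma~\ref{BSPclasses_lem}, and independence there via the parabolic basis theorem from \cite{CZZ2} (it is Theorem~14.3 there). The only cosmetic difference is that the paper pulls back through $\projP{\Xi}^*$ and $\inc^*$ to land in $\SWd$ and work with $\aA_{\Xi}\aA_{I_w^{\rev}}(x_\Pi f_e)$, whereas you go directly to $\SWPd{\Xi}$ and work with $\cA_{\Xi}\aA_{I_w^{\rev}}(x_\Pi f_e)$, but these agree after composing with the injection $\sprojB{\Xi}^\star$, so the independence statements are equivalent.
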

\begin{proof}
By Lemma~\ref{gendesing_lem}, the classes $\BSP{\Xi}{I_w}$ generate $\hh_T(G/P_{\Xi})$ as an $\sS$-module. We have 
\[
\mainiso \inc^* (\projP{\Xi})^* (\BSP{\Xi}{I_w}) = \mainiso \inc^* (\projP{\Xi})^*(\projP{\Xi})_*(\BS{I_w}) = \aA_{\Xi} \mainiso \inc^* (\BS{I_w}) = \aA_{\Xi} \aA_{I_w^{\rev}}\big(x_\Pi f_e\big)
\]
and these elements are linearly independent by \cite[Theorem 14.3]{CZZ2}.
\end{proof}

Let $\Xi' \subseteq \Xi \subseteq \Pi$.
\begin{cor} \label{pushsurj_cor}
The push-forward map $(\proj{\Xi'}{\Xi})_*:\hh_T(G/P_{\Xi'}) \to \hh_T(G/P_{\Xi})$ is surjective and the pull-back map $(\proj{\Xi'}{\Xi})^*: \hh_T(G/P_{\Xi}) \to \hh_T(G/P_{\Xi'})$ is injective.
\end{cor}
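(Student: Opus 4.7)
The plan is to leverage the Bott-Samelson basis of Proposition~\ref{basisGP_prop}: surjectivity will follow from it immediately by functoriality of push-forwards, and injectivity will then follow from surjectivity via the projection formula. For the surjectivity of $(\proj{\Xi'}{\Xi})_*$, I would first record the combinatorial inclusion $W^\Xi\subseteq W^{\Xi'}$, which holds because $W_{\Xi'}\subseteq W_\Xi$ and any element of minimal length in its $W_\Xi$-coset is a fortiori of minimal length in its smaller $W_{\Xi'}$-coset. For any $w\in W^\Xi$ and any reduced decomposition $I_w$ of $w$, the factorization $\projP{\Xi}=\proj{\Xi'}{\Xi}\circ\projP{\Xi'}$ together with functoriality of push-forwards yields
\[
(\proj{\Xi'}{\Xi})_*\bigl(\BSP{\Xi'}{I_w}\bigr)=\BSP{\Xi}{I_w}.
\]
Choosing reduced decompositions compatibly, as $w$ ranges over $W^\Xi$ these classes exhaust the Bott-Samelson basis of $\hh_T(G/P_\Xi)$, so $(\proj{\Xi'}{\Xi})_*$ is surjective.

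For the injectivity of $(\proj{\Xi'}{\Xi})^*$, I would use the projection formula. By the surjectivity just established, there exists $y\in\hh_T(G/P_{\Xi'})$ with $(\proj{\Xi'}{\Xi})_*(y)=\one$. Then for every $x\in\hh_T(G/P_\Xi)$,
\[
(\proj{\Xi'}{\Xi})_*\bigl((\proj{\Xi'}{\Xi})^*(x)\cdot y\bigr)=x\cdot(\proj{\Xi'}{\Xi})_*(y)=x,
\]
so any $x$ in the kernel of $(\proj{\Xi'}{\Xi})^*$ must vanish.

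There is no substantial obstacle here, as both parts reduce to formal manipulations once Proposition~\ref{basisGP_prop} is available. An alternative route for injectivity would be to factor $\projP{\Xi}^*=\projP{\Xi'}^*\circ(\proj{\Xi'}{\Xi})^*$ and to invoke the injectivity of $\projP{\Xi}^*$, which is already contained in the proof of Proposition~\ref{basisGP_prop} through the linear independence of $\aA_\Xi\,\aA_{I_w^{\rev}}(x_\Pi f_e)$ in $\SWd$. I would nonetheless favor the projection-formula approach because it only uses the statement of Proposition~\ref{basisGP_prop}, not its internal computation, and because it packages surjectivity and injectivity into one clean implication.
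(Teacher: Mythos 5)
Your surjectivity argument is essentially the paper's: both observe that the Bott-Samelson basis of $\hh_T(G/P_\Xi)$ provided by Proposition~\ref{basisGP_prop} is hit by the push-forward, the paper phrasing this in terms of classes pushed down from $G/B$ and you in terms of classes pushed down from $G/P_{\Xi'}$, which is the same fact by functoriality of push-forwards applied to $\projP{\Xi}=\proj{\Xi'}{\Xi}\circ\projP{\Xi'}$. For injectivity you take a genuinely different route. The paper factors $\projP{\Xi}^* = (\proj{\Xi'}{\Xi})^*$ followed by $\projP{\Xi'}^*$ and reads off the injectivity of the composite $\projP{\Xi}^*$ from the linear independence, inside $\SWd$, of the images $\aA_\Xi\,\aA_{I_w^{\rev}}(x_\Pi f_e)$ established inside the proof of Proposition~\ref{basisGP_prop}; you instead choose $y$ with $(\proj{\Xi'}{\Xi})_*(y)=\one$ (available by the surjectivity just proved) and apply the projection formula to see that $x\mapsto(\proj{\Xi'}{\Xi})_*\bigl((\proj{\Xi'}{\Xi})^*(x)\cdot y\bigr)$ is the identity, so the kernel of $(\proj{\Xi'}{\Xi})^*$ vanishes. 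Both are correct. Your version has the merit of treating Proposition~\ref{basisGP_prop} as a black box rather than reaching inside its proof, and it produces an explicit $\sS$-linear retraction of $(\proj{\Xi'}{\Xi})^*$; the paper's version is marginally shorter once the proof of that proposition is already on the page, which is why they state it as an immediate corollary. One small remark: the observation $W^\Xi\subseteq W^{\Xi'}$ is correct but not actually needed for surjectivity — what matters is that the $\BSP{\Xi}{I_w}$ form a basis of the target and are each in the image, not that their preimages $\BSP{\Xi'}{I_w}$ be basis elements of the source.
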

\begin{proof}
Surjectivity is obvious from the fact that $\BS{I_w}$ maps to the basis element $\BSP{\Xi}{I_{\bar{w}}}$ for any $w \in W^\Xi$ and injectivity can be seen in the proof of Proposition \ref{basisGP_prop}: the elements $\BSP{\Xi}{I_{\bar{w}}}$ stay independent when pulled back all the way to $\hh_T(W)$ through $\hh_T(G/B)$.
\end{proof}

\begin{theo} \label{identificationP_theo}
The pull-back map $\incP{\Xi}^*: \hh_T(G/P_{\Xi}) \to \hh_T(W/W_\Xi)$ is injective and the isomorphism $\mainisoP{\Xi}: \hh_T(W/W_\Xi) \isoto \SWPd{\Xi}$ identifies its image to $\DcFPd{\Xi} \subseteq \SWPd{\Xi}$. 
\end{theo}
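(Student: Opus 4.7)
My plan is to bootstrap from the Borel case (Theorem \ref{identificationB_theo}) via the projection $\projP{\Xi}\colon G/B\to G/P_\Xi$, exactly in parallel with how the algebraic side $\DcFPd{\Xi}$ embeds into $\DcFd$ via $\sprojB{\Xi}^\star$. The key diagram is the commutative square used in the proof of Corollary \ref{pushpullBorel_cor}:
\[
\xymatrix{
\hh_T(G/B)  \ar@{^{(}->}[r]^-{\inc^*} & \hh_T(W) \ar[r]^-{\mainiso}_-{\simeq} & \SWd \\
\hh_T(G/P_\Xi) \ar@{^{(}->}[u]^{\projP{\Xi}^*} \ar[r]^-{\incP{\Xi}^*} & \hh_T(W/W_\Xi) \ar[r]^-{\mainisoP{\Xi}}_-{\simeq} & \SWPd{\Xi} \ar@{^{(}->}[u]_{\sprojB{\Xi}^\star}
}
\]
The left vertical arrow is injective by Corollary \ref{pushsurj_cor}, the top horizontal composite is injective by Theorem \ref{identificationB_theo}, and $\sprojB{\Xi}^\star$ is injective on the algebraic side (as recorded in Section \ref{algcomb_sec}). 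A straightforward diagram chase then forces $\incP{\Xi}^*$ to be injective, which settles the first assertion.

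For the identification of the image, I would use the Bott-Samelson basis provided by Proposition \ref{basisGP_prop}. Fix reduced decompositions $(I_w)_{w \in W^\Xi}$. By Lemma \ref{BSPclasses_lem}, the classes $\BSP{\Xi}{I_w}$ pull back through $\mainisoP{\Xi} \circ \incP{\Xi}^*$ to the elements $\cA_\Xi \aA_{I_w^{\rev}}(x_\Pi \cdot f_e)$ inside $\SWPd{\Xi}$. Since the $\BSP{\Xi}{I_w}$ form an $\sS$-basis of $\hh_T(G/P_\Xi)$ and $\incP{\Xi}^*$ has just been shown to be injective, it suffices to verify that the family $\bigl\{\cA_\Xi \aA_{I_w^{\rev}}(x_\Pi \cdot f_e)\bigr\}_{w \in W^\Xi}$ is an $\sS$-basis of $\DcFPd{\Xi}$ as a submodule of $\SWPd{\Xi}$. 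This is precisely the parabolic analogue of the fact used in the Borel case, and it is [CZZ2, Theorem 14.3] — essentially the assertion that the push-pull operator $\cA_\Xi\colon \DcFd \to \DcFPd{\Xi}$ sends the $W^\Xi$-indexed portion of the Borel basis onto a basis of $\DcFPd{\Xi}$.

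The main obstacle, conceptually, is ensuring that the cited algebraic result from \cite{CZZ2} matches the geometric output of Lemma \ref{BSPclasses_lem} on the nose; once the conventions (reduced decompositions of minimal coset representatives, the placement of $\cA_\Xi$ versus $\aA_\Xi$, and the factor $x_\Pi \cdot f_e$) are aligned, the inclusion $\mainisoP{\Xi}(\im \incP{\Xi}^*) \subseteq \DcFPd{\Xi}$ is automatic from Lemma \ref{BSPclasses_lem} together with the fact that $\cA_\Xi$ lands in $\DcFPd{\Xi}$ by the last commutative square of Section \ref{algcomb_sec}, and equality follows by comparing $\sS$-bases of equal rank $|W^\Xi|$. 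No further geometric input is needed beyond what has already been developed in the preceding sections.
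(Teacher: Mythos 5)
Your proof is correct and follows essentially the same route as the paper's: injectivity comes from the injectivity of the composite pull-back through $G/B$ (the content of Corollary~\ref{pushsurj_cor}), and the image identification comes from matching the Bott--Samelson basis from Proposition~\ref{basisGP_prop} with the algebraic basis of $\DcFPd{\Xi}$ via Lemma~\ref{BSPclasses_lem}. The only minor point is that the paper cites both [CZZ2, Theorem~14.3] and [CZZ2, Lemma~15.1] for the final basis claim (Lemma~15.1 is what passes from the $\aA_\Xi$-picture on $(\DcFd)^{W_\Xi}$ to the $\cA_\Xi$-picture on $\DcFPd{\Xi}$); you cover this implicitly via the commutative square from Section~\ref{algcomb_sec}, but it is cleaner to cite Lemma~15.1 explicitly, and the closing remark about comparing ranks is redundant once you know the images of the Bott--Samelson classes already form a basis of $\DcFPd{\Xi}$.
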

\begin{proof}
As seen in the proof of Corollary \ref{pushsurj_cor}, pulling back further to $\hh_T(W)$ is injective, so injectivity of $\incP{\Xi}^*$ is clear. By Lemma \ref{BSPclasses_lem}, for any $w \in W^\Xi$, the Bott-Samelson class $\BSP{\Xi}{I_{w}}$ is sent to $\cA_{\Xi}\aA_{I_w^{\rev}}\big(x_{\Pi/\Xi} f_e\big)$. These elements form a basis of $\DcFPd{\Xi}$ by \cite[Theorem 14.3 and Lemma 15.1]{CZZ2}.
\end{proof}

\begin{cor}
The pull-back map $\incP{\Xi}^*: \hh_T(G/P_{\Xi}) \to \hh_T(W/W_\Xi)$ becomes an isomorphism after localization at the multiplicative subset generated by all $x_\alpha$ where $\alpha$ is a root.
\end{cor}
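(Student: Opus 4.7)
The plan is to reduce this to the corresponding algebraic fact about $\DcFPd{\Xi} \subseteq \SWPd{\Xi}$, exactly in parallel to the proof of Corollary \ref{lociso_cor} in the Borel case.

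First, I would apply Theorem \ref{identificationP_theo} to identify $\incP{\Xi}^*$, via the isomorphism $\mainisoP{\Xi}$, with the canonical inclusion $\DcFPd{\Xi} \hookrightarrow \SWPd{\Xi}$. Since $\mainisoP{\Xi}$ is an isomorphism of $\sS$-modules (hence remains an isomorphism after any localization of $\sS$), it suffices to show that the inclusion $\DcFPd{\Xi} \hookrightarrow \SWPd{\Xi}$ becomes an isomorphism after inverting all $x_\alpha$ for $\alpha \in \RS$.

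Inverting this multiplicative subset on $\sS$ produces $Q$ by definition. The diagram displayed in Section \ref{algcomb_sec} just after the dualization diagram (together with the statement that its horizontal maps become isomorphisms after tensoring by $Q$ on the left) shows that $Q \otimes_\sS \DcFPd{\Xi} \isoto \QWPd{\Xi}$ and $Q \otimes_\sS \SWPd{\Xi} \isoto \QWPd{\Xi}$, both through the inclusions in question. Hence after localization the inclusion $\DcFPd{\Xi} \hookrightarrow \SWPd{\Xi}$ becomes the identity on $\QWPd{\Xi}$, which finishes the proof.

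There is no real obstacle here; the only subtlety is keeping track that the identifications on both sides are compatible with the inclusion, but this is exactly the content of the commutative square of inclusions in Section \ref{algcomb_sec}. No new geometric input is needed beyond what Theorem \ref{identificationP_theo} already provides.
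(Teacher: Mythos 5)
Your proof is correct and follows exactly the paper's approach: use Theorem \ref{identificationP_theo} to reduce to the inclusion $\DcFPd{\Xi}\subseteq \SWPd{\Xi}$, then invoke that this inclusion becomes an isomorphism after localizing to $Q$. The paper cites \cite[Lemma 11.5]{CZZ2} for that last algebraic fact, which is the same content as the remark you appeal to about the horizontal maps in the diagram of Section \ref{algcomb_sec}.
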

\begin{proof}
After localization at this subset, the inclusion $\DcFPd{\Xi} \subseteq \SWPd{\Xi}$ becomes an isomorphism (see \cite[Lemma 11.5]{CZZ2}).
\end{proof}

As for $G/B$, we have the following commutative diagram
\begin{equation} \label{fixeddiagramP_eq}
\begin{gathered}
\xymatrix{
\hh_T(W/W_\Xi)\ar[r]^{(\incP{\Xi})_*} \ar[d]_{\simeq}^{\mainisoP{\Xi}} & \hh_T(G/P_{\Xi})\ar@{^(->}[r]^{(\incP{\Xi})^*}\ar[d]_{\simeq}^{\mainisoP{\Xi}} & \hh_T(W/W_\Xi)\ar[d]_{\simeq}^{\mainisoP{\Xi}}\\
\SWPd{\Xi}\ar[r]^{x_{\Pi/\Xi}\act (-)} & \DcFPd{\Xi}\ar@{^(->}[r]^{\eta_\Xi^*} & \SWPd{\Xi}
}
\end{gathered}
\end{equation}

\begin{lem} \label{pushP_lem}
The following diagram commutes.
\[
\xymatrix{
\hh_T(G/P_{\Xi'}) \ar[d]_{ (\proj{\Xi'}{\Xi})_*} \ar[r]^{(\incP{\Xi'})^*} & \hh_T(W/W_{\Xi'}) \ar[r]^{\mainisoP{\Xi'}}_{\simeq} & \SWPd{\Xi'} \ar@{}[r]|-{\subseteq} & \QWPd{\Xi'} \ar[d]^{\cA_{\Xi/\Xi'}} \\
\hh_T(G/P_{\Xi}) \ar[r]^{(\incP{\Xi})^*} & \hh_T(W/W_{\Xi}) \ar[r]^{\mainisoP{\Xi}}_{\simeq} & \SWPd{\Xi} \ar@{}[r]|-{\subseteq} & \QWPd{\Xi} \\
}
\]
\end{lem}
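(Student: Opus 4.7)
The plan is to reduce to the already established Borel case, namely Lemma \ref{pushBorel_lem}, by using that push-forward along $\projP{\Xi'}\colon G/B \to G/P_{\Xi'}$ is surjective (Corollary \ref{pushsurj_cor}). Since $\projP{\Xi} = \proj{\Xi'}{\Xi} \circ \projP{\Xi'}$, functoriality of the push-forward gives $(\projP{\Xi})_* = (\proj{\Xi'}{\Xi})_* \circ (\projP{\Xi'})_*$. Thus, for any $y \in \hh_T(G/P_{\Xi'})$, writing $y = (\projP{\Xi'})_*(x)$ with $x \in \hh_T(G/B)$, it suffices to prove that both composites in the diagram, applied to $(\projP{\Xi'})_*(x)$, yield the same element of $\QWPd{\Xi}$.

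Going along the bottom, the composition becomes
\[
\mainisoP{\Xi} \circ (\incP{\Xi})^* \circ (\proj{\Xi'}{\Xi})_* \circ (\projP{\Xi'})_* = \mainisoP{\Xi} \circ (\incP{\Xi})^* \circ (\projP{\Xi})_*,
\]
which equals $\cA_{\Xi} \circ \mainiso \circ \inc^*$ by Lemma \ref{pushBorel_lem}. Going along the top (and then down), we get
\[
\cA_{\Xi/\Xi'} \circ \mainisoP{\Xi'} \circ (\incP{\Xi'})^* \circ (\projP{\Xi'})_* = \cA_{\Xi/\Xi'} \circ \cA_{\Xi'} \circ \mainiso \circ \inc^*,
\]
again by Lemma \ref{pushBorel_lem} applied to the projection $\projP{\Xi'}$. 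The commutativity thus reduces to the purely algebraic identity
\[
\cA_{\Xi/\Xi'} \circ \cA_{\Xi'} = \cA_{\Xi} \colon \QWd \to \QWPd{\Xi}.
\]

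This last identity holds in the algebra $\QWd$ and is a combinatorial compatibility of the push-forward operators already established in \cite{CZZ2} (it reflects the factorization of $Y_{\Xi}$ as $Y_{\Xi/\Xi'} \cdot Y_{\Xi'}$ together with the compatibility of the $\sdiag{\cdot}{\cdot}^\star$ maps with orbit-sums, via \cite[Lem.~6.5]{CZZ2}). Granted this identity, the two composites agree on every element of the form $(\projP{\Xi'})_*(x)$, hence, by the surjectivity from Corollary \ref{pushsurj_cor}, on all of $\hh_T(G/P_{\Xi'})$.

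The only potentially delicate point is that we cannot invoke the geometric composition $(\projP{\Xi})_* = (\proj{\Xi'}{\Xi})_* \circ (\projP{\Xi'})_*$ to \emph{deduce} the algebraic relation $\cA_{\Xi} = \cA_{\Xi/\Xi'} \circ \cA_{\Xi'}$, since the latter is precisely the algebraic statement we are using to identify the geometric push-forward with $\cA_{\Xi/\Xi'}$. Hence the algebraic identity must be invoked as an independent combinatorial fact from \cite{CZZ2}; this is the main (though essentially routine) obstacle.
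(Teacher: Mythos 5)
Your proposal is correct and follows essentially the same route as the paper: precompose with the surjective push-forward $(\projP{\Xi'})_*$ from $G/B$ (Corollary \ref{pushsurj_cor}), apply Lemma \ref{pushBorel_lem} to both $\Xi'$ and $\Xi$, and conclude via the algebraic identity $\cA_{\Xi}=\cA_{\Xi/\Xi'}\circ\cA_{\Xi'}$, which the paper likewise invokes as a known fact from the algebraic theory rather than deducing it from geometry.
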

\begin{proof}
By the surjectivity claim in Corollary \ref{pushsurj_cor}, we can precompose the diagram by $\projP{\Xi'}$. Since $\cA_{\Xi}=\cA_{\Xi/\Xi'}\circ \cA_{\Xi'}$, the result follows from Lemma \ref{pushBorel_lem} applied first to $\Xi'$ and then to $\Xi$.
\end{proof}

Summarizing, we have the following commutative diagrams describing the correspondence between the cohomology rings and their algebraic counterparts:
\begin{equation} \label{pullcube_eq}
\begin{gathered}
\xymatrix{
 & \hh_T(W/W_{\Xi'}) \ar[rr]_-{\simeq}^-{\mainisoP{\Xi'}} & & \SWPd{\Xi'} \\
\hh_T(G/P_{\Xi'}) \ar[rr]_-(.70){\simeq}^-(.70){\mainisoP{\Xi'}} \ar@{^{(}->}[ur]^{\incP{\Xi'}^*} & & \DcFPd{\Xi'} \ar@{^{(}->}[ur]^{} & \\
 & \hh_T(W/W_{\Xi}) \ar'[r][rr]_-{\simeq}^-{\mainisoP{\Xi}} \ar@{^{(}->}'[u]^{(\fproj{\Xi'}{\Xi})^*}[uu] & & \SWPd{\Xi} \ar@{^{(}->}[uu]_{(\sproj{\Xi'}{\Xi})^\star} \\
\hh_T(G/P_{\Xi}) \ar[rr]_-{\simeq}^-{\mainisoP{\Xi}} \ar@{^{(}->}[uu]^{(\proj{\Xi'}{\Xi})^*} \ar@{^{(}->}[ur]^{\incP{\Xi}^*} & & \DcFPd{\Xi} \ar@{^{(}->}[uu]^{} \ar@{^{(}->}[ur]^{} & \\
}
\end{gathered}
\end{equation} 
For push-forwards, instead, the morphism $\cA_{\Xi/\Xi'} : \QWPd{\Xi'} \to \QWPd{\Xi}$ induces a map $\cA_{\Xi/\Xi'}:\DcFPd{\Xi'} \to \DcFPd{\Xi}$ by \cite[Lemma 15.1]{CZZ2}, and we have:
\begin{equation} \label{pushcube_eq}
\begin{gathered}
\xymatrix@C=5ex{
 & \hh_T(W/W_{\Xi'}) \ar[rr]_-{\simeq}^-{\mainisoP{\Xi'}} & & \SWPd{\Xi'} \ar@{^{(}->}[r] & \QWPd{\Xi'} \ar@{->>}[dd]^{\cA_{\Xi/\Xi'}} \\
\hh_T(G/P_{\Xi'}) \ar@{->>}[dd]_{(\proj{\Xi'}{\Xi})_*} \ar[rr]_-{\simeq}^-{\mainisoP{\Xi'}} \ar@{^{(}->}[ur]^{\incP{\Xi'}^*} & & \DcFPd{\Xi'} \ar@{^{(}->}[ur]^{} \ar@{->>}[dd]_(.30){\cA_{\Xi/\Xi'}} & \\
 & \hh_T(W/W_{\Xi}) \ar'[r][rr]_-{\simeq}^-{\mainisoP{\Xi}} & & \SWPd{\Xi} \ar@{^{(}->}[r] & \QWPd{\Xi} \\
\hh_T(G/P_{\Xi}) \ar[rr]_-{\simeq}^-{\mainisoP{\Xi}} \ar@{^{(}->}[ur]^{\incP{\Xi}^*} & & \DcFPd{\Xi} \ar@{^{(}->}[ur]^{} & \\
}
\end{gathered}
\end{equation}
Notice that on this diagram, there is no map from $\hh_T(W/W_{\Xi'})$ to $\hh_T(W/W_\Xi)$, nor from $\SWPd{\Xi'}$ to $\SWPd{\Xi}$ because the operator $\cA_{\Xi/\Xi'}$ is not defined at that level.

By \eqref{fixeddiagramP_eq} and the identity $x_{\Pi/\Xi'}=x_{\Pi/\Xi}x_{\Xi/\Xi'}$, we finally have the following.
\begin{equation}\label{pushcube2_eq}
\begin{gathered}
\xymatrix@C=5ex{
& \hh_T(W/W_{\Xi'}) \ar[dl]_-{(\incP{\Xi'})_*} \ar@{->>}'[d][dd]^-{(\fproj{\Xi'}{\Xi})_*}\ar[rr]_-{\simeq}^-{\mainisoP{\Xi'}} & & \SWPd{\Xi'} \ar[dl]_-{x_{\Pi/\Xi'}\act\_}\ar@{->>}[dd]^{(\sdiag{\Xi'}{\Xi})^\star}\ar[r] & \QWPd{\Xi'}\ar@{->>}[dd]^{(\sdiag{\Xi'}{\Xi})^\star}\\
\hh_T(G/P_{\Xi'})\ar[rr]_(.4){\simeq}^(.4){\mainisoP{\Xi'}}\ar@{->>}[dd]_{(\proj{\Xi'}{\Xi})_*} & & \DcFPd{\Xi'}\ar@{->>}[dd]_(.3){\cA_{\Xi/\Xi'}}&&\\
 & \hh_T(W/W_{\Xi'})\ar'[r][rr]_-{\simeq}^-{\mainisoP{\Xi}}\ar[dl]_-{(\incP{\Xi})_*} && \SWPd{\Xi}\ar[dl]^{x_{\Pi/\Xi'}\act\_} \ar[r]& \QWPd{\Xi}\\ 
\hh_T(G/P_{\Xi})\ar[rr]_-{\simeq}^-{\mainisoP{\Xi}} & & \DcFPd{\Xi} & &}
\end{gathered}
\end{equation}

\section{Invariant subrings and push-forward pairings}

We now describe how the Weyl group $W$, as an abstract group, acts on $\hh_T(G/B)$, and how $W_\Xi$-invariant elements of this action are related to $\hh_T(G/P_{\Xi})$.

\medskip

Since the projection $G/T \to G/B$ is an affine bundle, by homotopy invariance the induced pull-back  $\hh_T(G/B) \isoto \hh_T(G/T)$ is an isomorphism. The Weyl group action is easier to describe geometrically on $\hh_T(G/T)$. Since $W\simeq N_G(T)/T$, multiplication on the right by $w\in W$ defines a right action of $W$ on $G/T$, by $T$-equivariant morphisms. Action by induced pull-backs, therefore, defines a left action of $W$ on $\hh_T(G/T)$. Similarly, 
a right action of $W$ on the $T$-fixed points $(G/T)^T=W$ induces a left action of $W$ on $\hh_T(W)$, and the pull-back $\hh_T(G/T) \to \hh_T(W)$ is $W$-equivariant. Identifying $\hh_T(G/T)\simeq \hh_T(G/B)$, we obtain the Weyl group action on $\hh_T(G/B)$ with $\inc^*\colon\hh_T(G/B) \to \hh_T(W)$ being $W$-equivariant. 

\medskip

One easily checks on $\sS$-basis elements $f_w$ that through $\mainiso$, this $W$-action on $\hh_T(W)$ corresponds to the $W$-action on $\SWd$ by the Hecke action $w(z)=\de_w \act z$, as described in \cite[\S 4]{CZZ2} (by definition, we have $\de_w \act f_v = f_{vw^{-1}}$).
 
\begin{theo} \label{Wfixed_theo}
The image of the injective pull-back map $\hh_T(G/P_{\Xi}) \to \hh_T(G/B)$ is $\hh_T(G/B)^{W_\Xi}$. 
\end{theo}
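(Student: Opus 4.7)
The plan is to transport the entire statement to the algebraic side via the isomorphisms of Theorems \ref{identificationB_theo} and \ref{identificationP_theo}, and then appeal to the purely combinatorial statement \cite[Lemma 11.7]{CZZ2} which asserts that the image of $\sprojB{\Xi}^\star$ in $\DcFd$ equals $(\DcFd)^{W_\Xi}$ for the Hecke action.

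First, I would identify the geometric pull-back $(\proj{\emptyset}{\Xi})^*\colon \hh_T(G/P_\Xi)\to \hh_T(G/B)$ with the algebraic injection $\sprojB{\Xi}^\star\colon \DcFPd{\Xi}\hookrightarrow \DcFd$. This is exactly the content of the front and back faces of Diagram \eqref{pullcube_eq} specialised to $\Xi'=\emptyset$: both squares commute, and the vertical maps on the right-hand face are precisely $\sprojB{\Xi}^\star$ on $\DcFPd{\Xi}$ and on $\SWPd{\Xi}$. Injectivity of $\sprojB{\Xi}^\star$ moreover reproves the injectivity that was already asserted in the statement (and obtained geometrically in Corollary \ref{pushsurj_cor}).

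Second, I would match the geometric $W$-action on $\hh_T(G/B)$ with the Hecke action on $\DcFd\subseteq \SWd$. This is stated in the paragraph preceding Theorem \ref{Wfixed_theo}: right-translation by $w\in W$ on $G/T$ and on $(G/T)^T=W$ pulls back to a left action on $\hh_T(G/T)\simeq \hh_T(G/B)$ respectively on $\hh_T(W)$; checking on the $\sS$-basis $\{f_w\}$ that $\mainiso$ intertwines this with the Hecke action $\de_w\act f_v=f_{vw^{-1}}$ is a direct computation, since both actions are determined by their effect on fixed points. Therefore the subring $\hh_T(G/B)^{W_\Xi}$ corresponds, under $\mainiso\circ\inc^*$, exactly to $(\DcFd)^{W_\Xi}$.

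Combining the two previous steps, the statement reduces to the equality $\sprojB{\Xi}^\star(\DcFPd{\Xi})=(\DcFd)^{W_\Xi}$ inside $\DcFd$, which is \cite[Lemma 11.7]{CZZ2}. The only real work is the identification of the $W$-actions in the second step, but that verification is already carried out (on the dual basis) before the theorem is stated; no further essential obstacle remains, so the proof amounts to assembling the three ingredients above.
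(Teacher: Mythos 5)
Your proposal is correct and follows essentially the same route as the paper: transport the problem through Diagram \eqref{pullcube_eq} (with $\Xi'=\emptyset$), use the $W$-equivariance and injectivity of $\inc^*$ together with the identification of the geometric $W$-action with the Hecke action, and invoke \cite[Lemma 11.7]{CZZ2} to identify the image of $\sprojB{\Xi}^\star$ with the $W_\Xi$-invariants. The only stylistic difference is that you explicitly phrase the final reduction at the level of $\DcFd$ while the paper states it at the level of $\SWd$, but Lemma 11.7 covers both (as the paper itself notes in section~\ref{algcomb_sec}), so the two formulations are interchangeable.
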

\begin{proof}
In Diagram~\eqref{pullcube_eq}, the upper square is $W$-equviariant. Since $\inc^*$ is both $W$-equivariant and injective, we are reduced to showing that $\sprojB{\Xi}^\star$ identifies $\SWPd{\Xi}$ to $(\SWd)^{W_\Xi}$, which  follows from \cite[Lemma 11.7]{CZZ2}.
\end{proof}

The following theorem generalizes \cite[Proposition 6.5.(i)]{Br97}. According to the irreducible Dynkin types of the group, regularity assumptions on elements of the base ring $R$ (or weaker assumptions on elements in $R\lbr x \rbr$) are needed. They are carefully summarized in \cite[Lemma 2.7]{CZZ2}, but as a first approximation, regularity  in $R$ of $2$, $3$ and divisors of $|\cl_w/\cl_r|$ cover all types, except the $C_n^{sc}$ case, in which one needs $2$ to be invertible.\footnote{Regarding these assumptions, there is a slight omission in the statement of \cite[Proposition 6.5.(i)]{Br97}. One needs to add that no root is divisible in the lattice for the statement to hold integrally. Otherwise, for example, the product of all roots divided by $2$ gives a counter-example in the $C_2^{sc}$ case.} 
\begin{theo} \label{image_theo}
Under the regularity assumptions of \cite[Lemma 2.7]{CZZ2}, the image of the injective pull-back $\inc^*: \hh_T(G/B) \to \hh_T(W)\cong \SWd$ is the set of element $\sum_{w\in W}q_wf_w$ such that $x_\alpha|(q_w-q_{s_\alpha w})$ for all roots $\alpha$. 
\end{theo}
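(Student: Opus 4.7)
The plan is to reduce the statement to a purely algebraic one and split it into a routine and a subtle direction.

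First I would invoke Theorem \ref{identificationB_theo}, which identifies $\inc^*(\hh_T(G/B))$ with $\DcFd \subseteq \SWd$ via $\mainiso$. The theorem then becomes the purely algebraic assertion that, under the regularity hypotheses of \cite[Lemma~2.7]{CZZ2},
\[
\DcFd \;=\; M_{\text{GKM}} \;:=\; \Big\{\sum_{w\in W} q_w f_w \in \SWd \;\big|\; x_\alpha \,\text{ divides }\, q_w - q_{s_\alpha w} \text{ for every } \alpha \in \RS \text{ and } w \in W \Big\}.
\]

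For the inclusion $\DcFd \subseteq M_{\text{GKM}}$, I would use that $X_\alpha \in \DcF$ for \emph{every} root $\alpha$ (not just simple ones) by definition of $\DcF$, together with the fact that $\delta_w \in \SW \subseteq \DcF$. A direct computation in $\QW$ gives
\[
X_\alpha \cdot \delta_w \;=\; \tfrac{1}{x_\alpha}\delta_w - \tfrac{1}{x_\alpha}\delta_{s_\alpha w},
\]
so for any $f = \sum q_v f_v \in \DcFd$ one has $f(X_\alpha \delta_w) = \tfrac{1}{x_\alpha}(q_w - q_{s_\alpha w})$, which must lie in $\sS$ since $X_\alpha \delta_w \in \DcF$. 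This yields the divisibility.

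The reverse inclusion $M_{\text{GKM}} \subseteq \DcFd$ is the main obstacle and is where the regularity hypotheses enter. Given $f \in M_{\text{GKM}}$, I need $f(X_{I_w}) \in \sS$ for every reduced decomposition $I_w$ (these $X_{I_w}$ form an $\sS$-basis of $\DcF$ by \cite[Prop.~7.7]{CZZ}). The strategy is to prove stability of $M_{\text{GKM}}$ under the $\act$-action of each $X_\alpha$: using $\delta_w X_\alpha = \tfrac{1}{x_{w(\alpha)}}(\delta_w - \delta_{ws_\alpha})$ and the identity $ws_\alpha = s_{w(\alpha)} w$, the divisibility hypothesis applied to the root $w(\alpha)$ gives $(X_\alpha \act f)(\delta_w) \in \sS$, so $X_\alpha \act f \in \SWd$. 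Checking that $X_\alpha \act f$ again satisfies the GKM divisibilities is the delicate part: here one needs the regularity of certain elements of $\sS$ (the precise conditions in \cite[Lem.~2.7]{CZZ2}), because expressing the new coefficient differences in the basis requires dividing by formal expressions that are only guaranteed to be non-zero-divisors under those hypotheses. Iterating starting from the unit of $\SWd$ and combining with the fact that $\DcFd$ is free of rank one over $\DcF$ on the generator $x_\Pi \act f_e$ (cited from \cite[Theorem~10.13]{CZZ2}), one identifies $M_{\text{GKM}}$ with the $\DcF$-submodule $\DcF \act (x_\Pi \act f_e) = \DcFd$.

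The hard step is therefore the algebraic stability argument, and it is genuinely necessary to assume the regularity conditions: the footnote after the theorem exhibits a counterexample in type $C_2^{sc}$ when $2$ is not inverted, where the product of all roots divided by $2$ produces an element of $M_{\text{GKM}} \setminus \DcFd$. I expect the cleanest write-up to simply invoke the algebraic characterization of $\DcFd$ inside $\SWd$ proved in \cite{CZZ2} under exactly these hypotheses, rather than reprove it from scratch here.
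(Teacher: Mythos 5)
Your proposal matches the paper's approach exactly: reduce via Theorem \ref{identificationB_theo} to the purely algebraic claim that $\DcFd$ is cut out in $\SWd$ by the GKM divisibility conditions, and cite the algebraic result from \cite{CZZ2} (the paper cites \cite[Theorem 10.7]{CZZ2}) rather than reprove it. Your extra sketch of that algebraic theorem is reasonable in spirit, though the ``iterate starting from the unit of $\SWd$'' step would not by itself close the reverse inclusion (the $\DcF$-orbit of $\unit$ is not all of $\DcFd$, and stability of $M_{\text{GKM}}$ under the $\act$-action does not immediately give $M_{\text{GKM}}\subseteq\DcFd$); your decision to simply cite \cite{CZZ2} is the right call and is what the paper does.
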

\begin{proof}
If follows from \cite[Theorem 10.7]{CZZ2}.
\end{proof}

We now describe the pairing given by multiplication and then push-forward to the point, that we call the \emph{push-forward pairing}.
Let 
\[
\begin{array}{ccc}
\hh_T(G/P_{\Xi})\otimes_{\sS} \hh_T(G/P_{\Xi}) & \tooby{\raisebox{1ex}{$\pairing{-}{-}{\Xi}$}} & \sS \\
\xi \otimes \xi' & \longmapsto & \pairing{\xi}{\xi'}{\Xi} = (\proj{\Xi}{\Pi})_*(\xi \cdot \xi')
\end{array}
\]
It is clearly $\sS$-bilinear and symmetric. Through the isomorphism $\mainiso$, this pairing corresponds to
\[
\pairing{\xi}{\xi'}{\Xi} = \cA_{\Pi/\Xi}(\mainisoP{\Xi}(\xi)\cdot \mainisoP{\Xi}(\xi'))
\]
by Diagram \eqref{pushcube_eq}.

\begin{theo} \label{pairing_theo}
The push-forward pairing $\hh_T(G/P_{\Xi})\otimes_\sS \hh_T(G/P_{\Xi}) \to \hh_T(\pt) \simeq \sS$, sending $(\xi,\xi')$ to $\pairing{\xi}{\xi'}{\Xi}$ is non-degenerate.
\end{theo}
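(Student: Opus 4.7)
The strategy is to transport the statement through Theorem~\ref{identificationP_theo} and the lower face of Diagram~\eqref{pushcube_eq}, reducing non-degeneracy of the push-forward pairing to the purely algebraic statement that the $\sS$-bilinear pairing
\[
\DcFPd{\Xi} \otimes_\sS \DcFPd{\Xi} \longrightarrow \sS, \quad (a,b) \longmapsto \cA_{\Pi/\Xi}(a \cdot b),
\]
has trivial left (equivalently, right) kernel.

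First I would compute this pairing after extending scalars to $\qQ$, i.e.\ on $\QWPd{\Xi}$. Unfolding the definition of $\cA_{\Pi/\Xi}$ and of the action $\act$, and using the fact that $x_{\Pi/\Xi}$ is $W_\Xi$-invariant so that $\bar{v}(x_{\Pi/\Xi})$ is well defined for $\bar{v} \in W/W_\Xi$, one checks directly that $\cA_{\Pi/\Xi}(f_{\bar v}) = 1/\bar{v}(x_{\Pi/\Xi})$. Combined with the orthogonality relation $f_{\bar v} \cdot f_{\bar w} = \Kr_{\bar v,\bar w} f_{\bar v}$ this yields, for $\xi = \sum a_{\bar v} f_{\bar v}$ and $\xi' = \sum a'_{\bar v} f_{\bar v}$ in $\QWPd{\Xi}$, an Atiyah--Bott--Berline--Vergne style localization formula
\[
\pairing{\xi}{\xi'}{\Xi} = \sum_{\bar v \in W/W_\Xi} \frac{a_{\bar v}\, a'_{\bar v}}{\bar{v}(x_{\Pi/\Xi})}.
\]
In particular the pairing on $\QWPd{\Xi}$ is already perfect, with basis $(f_{\bar v})$ dual to $(\bar{v}(x_{\Pi/\Xi}) f_{\bar v})$.

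To descend from $\qQ$ to $\sS$, I would use the Bott-Samelson $\sS$-basis $(\BSP{\Xi}{I_w})_{w \in W^\Xi}$ of $\DcFPd{\Xi}$ from Proposition~\ref{basisGP_prop}. After tensoring with $\qQ$ over $\sS$ it remains a $\qQ$-basis of $\QWPd{\Xi}$, so perfectness of the pairing on $\QWPd{\Xi}$ forces the Gram matrix $G=\big(\pairing{\BSP{\Xi}{I_w}}{\BSP{\Xi}{I_{w'}}}{\Xi}\big)_{w,w' \in W^\Xi}$ to have determinant invertible in $\qQ$. But the entries of $G$ lie in $\sS$, and by our $\Sigma$-regularity assumption the localization map $\sS \hookrightarrow \qQ$ is injective, so $\det G \in \sS$ becomes invertible in $\qQ$ and is therefore a non-zero divisor in $\sS$. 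This forces the $\sS$-linear map $\xi \mapsto \pairing{\xi}{-}{\Xi}$ from $\DcFPd{\Xi}$ to $\Hom_\sS(\DcFPd{\Xi}, \sS)$ to be injective, which is exactly non-degeneracy.

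The only substantive input from geometry is the translation of the push-forward pairing into its algebraic counterpart $(a,b)\mapsto\cA_{\Pi/\Xi}(ab)$, and this has already been established through Lemma~\ref{pushP_lem} and Diagram~\eqref{pushcube_eq} applied to $\proj{\Xi}{\Pi}$. Once this is in place the proof is purely a two-line linear algebra observation plus the $\Sigma$-regularity of $\sS$; so the step I would expect to need the most care is simply the elementary computation of $\cA_{\Pi/\Xi}(f_{\bar v})$, which I expect to be straightforward from the definitions. No appeal to the Bruhat-triangularity of Bott-Samelson classes or to the rank-one freeness of $\DcFd$ over $\DcF$ is required.
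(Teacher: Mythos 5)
Your proof is correct, and it takes a genuinely different route from the paper's. The paper's proof of Theorem~\ref{pairing_theo} is a one-line citation to \cite[Theorem 15.6]{CZZ2}, an algebraic result proved in the companion paper (where, as the subsequent remark notes, the non-degeneracy comes from exhibiting an explicit dual basis to the Bott-Samelson basis, at least for $G/B$; this actually yields the stronger conclusion that the pairing is \emph{perfect}). Your argument is instead a self-contained localization argument: you compute $\cA_{\Pi/\Xi}(f_{\bar v}) = 1/\bar{v}(x_{\Pi/\Xi})$ directly from the definition, which gives the ABBV-type formula and shows the pairing on $\QWPd{\Xi}$ is perfect with an obvious dual pair of bases; then, since the Bott-Samelson classes form an $\sS$-basis of $\DcFPd{\Xi}$ and hence a $Q$-basis of $\QWPd{\Xi}$, the Gram matrix has entries in $\sS$ with determinant becoming a unit in $Q$, hence (using $\Sigma$-regularity so that $\sS \hookrightarrow Q$) a non-zero divisor in $\sS$, and injectivity of $\xi \mapsto \pairing{\xi}{-}{\Xi}$ follows by the adjugate trick. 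What your approach buys is a short, conceptually transparent argument using only the identification theorem and the push-forward diagram, with no further appeal to the combinatorics of CZZ2; what the paper's cited approach buys is the stronger perfectness (which matters if one wants the dual Bott-Samelson basis for algorithmic computations, as the remark emphasizes), but at the cost of importing a more elaborate combinatorial result. Since the statement only claims non-degeneracy, your argument is sufficient.
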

\begin{proof}
This follows from \cite[Theorem 15.6]{CZZ2}.
\end{proof}

\begin{rem}
Note that in \cite[Theorem 15.5]{CZZ2}, we describe a basis that is dual to the basis of Bott-Samelson classes for the push-forward pairing on $G/B$. That dual basis can be very useful for algorithmic computations. However, it is given in combinatorial terms, and we do not have a geometric interpretation of its elements. When the formal group law is additive, this problem disappears since the basis is auto-dual (up to a permutation), see \cite[Prop. 1, p. 69]{Dem74}, but for general formal group laws, this is not the case. 
\end{rem}

\section{Borel style presentation} \label{charmap_sec}

The \emph{geometric (equivariant) characteristic map} $c_g:\hh_T(\pt) \to \hh_T(G/B)$ is defined as the composition
\[
\hh_T(\pt) \isoto \hh_{T \times G}(G) \isofrom \hh_G(G/T) \to \hh_T(G/T) \isofrom \hh_T(G/B)
\]
where the first two maps are isomorphisms from Axiom (A\ref{torsor_ax}), the third is the restriction to the subgroup $T$ of $G$ and the fourth is the pull-back map, an isomorphism by Axiom (A\ref{hominv_ax}) of homotopy invariance. In $\hh_{T \times G}(G)$, the action of $T \times G$ on $G$ is by $(t,g)\cdot g' = gg't^{-1}$, and the other non-trivial actions are by multiplication on the left. 
Note that $\cg$ is $R=\hh(\pt)$-linear, although not $\hh_T(\pt)$-linear. By restricting further to $\hh(G/B)$, one obtains the non-equivariant characteristic map $\cc:\hh_T(\pt) \to \hh(G/B)$. Recall the algebraic characteristic map $\cmS:\sS \to \DcFd$, sending $s \to s \act \unit$, defined in section \ref{algcomb_sec}.
\begin{lem}
The algebraic and geometric characteristic maps coincide with each other, up to the identifications $\sS \simeq \hh_T(\pt)$ of Theorem \ref{ShT_theo} and $\mainiso: \hh_T(G/B) \simeq \DcFd$ of Theorem \ref{identificationB_theo}. 
\end{lem}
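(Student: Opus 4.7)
My plan is to verify the equality of the two maps on a set of topological generators of $\sS$. Both $\cmS : \sS \to \DcFd$ and the composite $\mainiso \circ \cg : \sS \to \DcFd$ (where the target is viewed as a subring of $\SWd$ via Theorem~\ref{identificationB_theo}) are $R$-algebra homomorphisms: for $\cmS$ this is immediate from the definition, while for $\cg$ it follows because each map in the chain defining it is a ring map (pull-backs, restrictions and the isomorphisms coming from Axioms~\ref{torsor_ax} and~\ref{hominv_ax}). Since $\sS = \RMF$ is topologically generated over $R$ by the elements $x_\lambda$, $\lambda \in M$, it suffices to check equality on each $x_\lambda$; both maps are compatible with the relevant filtrations coming from $\IF$ on the source and from the $\gamma$-filtration on the target, so no extra care is needed at this point.

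For the algebraic side, by the very definition of the action $\act$ of $\sS \subseteq \SW$ on $\SWd$, we have
\[
\cmS(x_\lambda) \;=\; x_\lambda \act \unit \;=\; \sum_{w \in W} w(x_\lambda)\, f_w.
\]
For the geometric side, under the identification of Theorem~\ref{ShT_theo}, $x_\lambda$ corresponds to $c_1^T(k_\lambda) \in \hh_T(\pt)$, where $k_\lambda$ is the $T$-representation of weight $\lambda$. Tracing this class through the four steps in the definition of $\cg$ yields $c_1^T(\cL_\lambda) \in \hh_T(G/B)$, where $\cL_\lambda := G \times^B k_\lambda$ is the $T$-equivariant line bundle on $G/B$ associated to $\lambda$ by the Borel mixing construction (with $T$ acting on $G/B$ by left multiplication). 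I would then apply $\mainiso \circ \inc^*$ and use that characteristic classes commute with pull-back: an argument parallel to the end of the proof of Lemma~\ref{tangent_lem} (replacing tangent weights by the weight $\lambda$) shows that the fiber of $\cL_\lambda$ at the fixed point $wB/B$ is the $T$-representation of weight $w(\lambda)$. Hence $(\inc^w)^* c_1^T(\cL_\lambda) = x_{w(\lambda)} = w(x_\lambda)$, and therefore
\[
\mainiso \circ \inc^* \circ \cg(x_\lambda) \;=\; \sum_{w \in W} w(x_\lambda)\, f_w \;=\; \mainiso \circ \inc^* \circ \cmS(x_\lambda).
\]
Since $\mainiso \circ \inc^*$ is injective (Theorem~\ref{identificationB_theo}), this gives the desired equality on $x_\lambda$ and hence on all of $\sS$.

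The main obstacle is the careful bookkeeping required to recognize the image of $c_1^T(k_\lambda)$, under the chain of isomorphisms defining $\cg$, as $c_1^T(\cL_\lambda)$: one must track the line bundle $k_\lambda$ on $\pt$ first to the trivial bundle $G \times k_\lambda$ on $G$ with $T \times G$-action, then descend along the $T$-torsor $G \to G/T$ to identify it with the $G$-equivariant bundle $G \times^T k_\lambda$ on $G/T$, then restrict along $T \hookrightarrow G$, and finally pull back along the affine bundle $G/T \to G/B$ via homotopy invariance (Axiom~\ref{hominv_ax}) to obtain $\cL_\lambda$. Once this identification is in place, the fixed-point computation reproducing the character $w(\lambda)$ is routine, and the lemma follows.
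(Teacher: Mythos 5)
Your proposal is correct and follows essentially the same strategy as the paper: reduce to comparing the two maps componentwise over the fixed points $w\in W$, then to the topological generators $x_\lambda$, compute $\cmS(x_\lambda) = \sum_w w(x_\lambda)f_w$ on the algebraic side, and trace a line bundle through the torsor/restriction/pull-back chain on the geometric side to land on $x_{w(\lambda)}$ at each fixed point. The one presentational difference is that the paper makes the "naturality of Chern classes" step explicit by reducing the line-bundle bookkeeping to a computation in $K$-theory (introducing the intermediate bundles $M_\lambda$, $N_\lambda$, $M'_\lambda$ to handle the inverse isomorphisms in the chain cleanly), whereas you track the bundle itself through all four steps and only take $c_1^T$ at the end; both amount to the same identification, and your closing paragraph correctly flags this bookkeeping as the only nontrivial part.
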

\begin{proof}
It suffices to show the equality after embedding in $\SWd \simeq \hh_T(W)$, which decomposes as copies of $\sS$. In other words, it suffices to compare, for every $w \in W$, a map $\phi_w$ from $\sS$ to itself, and a map $\psi_w$ from $\hh_T(\pt)$ to itself. Both are continuous $R$-algebra maps, $\psi_w$ for the topology induced by the $\gamma$-filtration and $\phi_w$ for the $\IF$-adic topology, which correspond to each other through $\sS \simeq \hh_T(\pt)$. Since $\sS$ is (topologically) generated by elements $x_\lambda$, corresponding to first Chern classes of line bundles $c_1^T(L_\lambda)$ in $\hh_T(\pt)$, it suffices to compare $\phi_w(x_\lambda)$ and $\psi_w(c_1^T(L_\Lambda))$. By definition of $\cmS$, we have $\phi(x_\lambda)=x_{w(\lambda)}$. Since $\cg$ is defined using only pull-back and restriction maps, both commuting with taking Chern classes, it suffices to verify that when $\hh=K$, the Grothendieck group, we have $\psi_w([L_\lambda])=[L_{w(\lambda)}]$. This is easily checked by using total spaces of bundles, and the formalism of points. For this purpose, let us consider the following equivariant bundles:
\begin{itemize}
\item $M_\lambda$, the $T \times G$-equivariant line bundle over $G$, whose total space is $L_\lambda \times G$ mapping by the second projection to $G$, and with action given on points by $(t,g) \cdot (v,g')=(\lambda(t)v,gg't^{-1})$;
\item $N_\lambda$, the $G$-equivariant line bundle over $G/T$, whose total space is $G \times^T L_\lambda$, the quotient of $G \times L_\lambda$ by the relation $(gt,v)=(g,\lambda(t)v)$, mapping to $G/T$ by the first projection, and with $G$ action by $g \cdot (g',v)=(gg',v)$; 
\item $M'_\lambda$, the $T \times G$-equivariant line bundle over $G$, whose total space is $G \times_{G/T} G \times^T L_\lambda$, mapping to $G$ by the first projection, with action of $T \times G$ given by $(t,g)\cdot (g_1,g_2,v)=(gg_1t^{-1},gg_2,v)$.
\end{itemize}
It is clear that $L_\lambda$ restricts to $T \times G$ and pulls-back over $G$ to $M_\lambda$. Similarly, $N_\lambda$ restricts and pulls-back to $M'_\lambda$. But $M_\lambda$ maps isomorphically to $M'_\lambda$ by the map $(v,g) \mapsto (g,g,v)$. Therefore, $[L_\lambda]$ maps to $[N_\lambda]$ by the map $K_T(\pt) \isoto K_{T \times G}(G) \isofrom K_{G}(G/T)$. Furthermore, $N_\lambda$ restricts and pulls-back as a $T$-equivariant bundle to the fixed point $w$ in $G/T$ (or $G/B$) as $wT \times^T L_\lambda$ with $T$-action on the left, isomorphic to $L_{w(\lambda)}$. This completes the proof. 
\end{proof}

Let $\tor$ be the torsion index of the root datum, as defined in \cite[\S 5]{Dem73}. See also \cite[5.1]{CPZ} for a table giving the values of its prime divisors for each simply connected type. For other types, one just needs to add the prime divisors of $|\Lambda_w/\Lambda|$ by \cite[\S 5, Prop.~6]{Dem73}.
Together with the previous lemma, \cite[Thm.~11.4]{CZZ} immediately implies a Borel style presentation of $\hh_T(G/B)$. Let $\pi:G/B \to \pt$ be the structural map. 
\begin{theo} \label{BorelPres_theo}
If $2 \tor$ is regular in $R$, then the map $\hh_T(pt)\otimes_{\hh_T(\pt)^W} \hh_T(\pt) \to \hh_T(G/B)$ sending $a \otimes b$ to $\pi^*(a) \cg(b)$ is an $\hh_T(\pt)$-linear ring isomorphism if and only if the (non-equivariant) characteristic map $\cc: \hh_T(\pt) \to \hh(G/B)$ is surjective.
\end{theo}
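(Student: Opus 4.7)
The plan is to translate the theorem, via the identifications already established, into the purely algebraic statement of \cite[Thm.~11.4]{CZZ} about the map $\sS \otimes_{\sS^W} \sS \to \DcFd$.

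First I would set up the dictionary. By Theorem~\ref{ShT_theo} identify $\hh_T(\pt)$ with $\sS$, so that $\hh_T(\pt)^W$ corresponds to $\sS^W$, and by Theorem~\ref{identificationB_theo} identify $\hh_T(G/B)$ with $\DcFd$. Under these identifications the pull-back $\pi^*\colon \hh_T(\pt) \to \hh_T(G/B)$ becomes the $\sS$-module structure map of $\DcFd$ (since $\pi^*$ endows $\hh_T(G/B)$ with its $\hh_T(\pt)$-algebra structure, which is the one used throughout the paper), and by the preceding lemma the geometric characteristic map $\cg$ coincides with the algebraic one $\cmS\colon s \mapsto s \act \unit$. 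Hence the map in the theorem corresponds to
\[
\sS \otimes_{\sS^W} \sS \longrightarrow \DcFd, \qquad a \otimes b \longmapsto a \cdot \cmS(b).
\]

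Next I would match the two surjectivity conditions. The non-equivariant characteristic map $\cc$ is obtained from $\cg$ by further restriction to the trivial group. Under the dictionary this corresponds to reducing modulo the augmentation ideal $\IF$ of $\sS$, and one checks that $\hh(G/B)$ is identified with $\DcFd/\IF\cdot \DcFd$ (using Axiom~\ref{torsor_ax} to pass between $\hh_T$ and $\hh$, together with the freeness of $\DcFd$ over $\sS$ implicit in Proposition~\ref{SchubertBasis_prop}). Thus $\cc$ corresponds to the reduction $\bar\cmS\colon R \to \DcFd/\IF\cdot \DcFd$ of the algebraic characteristic map, and surjectivity of $\cc$ becomes surjectivity of $\bar \cmS$.

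Finally I would invoke \cite[Thm.~11.4]{CZZ}: under the regularity hypothesis that $2\tor$ is regular in $R$, that theorem asserts that the multiplication map $\sS \otimes_{\sS^W} \sS \to \DcFd$ is an isomorphism if and only if $\bar\cmS$ is surjective, which, in view of the identifications above, is exactly the statement of the theorem. The hard part is not the main argument, which is really a formal translation, but the bookkeeping in the second step: checking that the non-equivariant $\hh(G/B)$ does match $\DcFd/\IF\cdot \DcFd$ as an $R$-algebra, and that the characteristic map $\cc$ corresponds to $\bar\cmS$ under this identification; this is a compatibility statement for the characteristic map with the base change $\sS \to R$ and follows from the naturality built into the axioms of an equivariant oriented cohomology theory.
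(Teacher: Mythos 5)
Your proposal is correct and follows the same route as the paper: via the preceding lemma (identifying $\cg$ with $\cmS$) and Theorems~\ref{ShT_theo} and~\ref{identificationB_theo}, the geometric statement translates into the purely algebraic statement of \cite[Thm.~11.4]{CZZ}, and the theorem is then a formal consequence. The paper compresses all of this into the single line ``Together with the previous lemma, \cite[Thm.~11.4]{CZZ} immediately implies\ldots'', so your write-up simply makes explicit the bookkeeping the paper leaves implicit. One small imprecision: the identification $\hh(G/B) \simeq \DcFd/\IF\cdot\DcFd$ does not come from Axiom~\ref{torsor_ax} (which relates $\hh_{G/H}(Y)$ to $\hh_G(X)$ for an $H$-torsor $X\to Y$ and gives no direct bridge between $\hh_T(G/B)$ and $\hh(G/B)$); it is the change-of-coefficients isomorphism $\hh_H(\pt)\otimes_{\hh_T(\pt)}\hh_T(G/B)\isoto\hh_H(G/B)$ for $H=\{1\}$, proved in the final section of the paper via the Bott--Samelson $\sS$-basis and the perfectness of the push-forward pairing (or, equivalently, via the non-equivariant computation of $\hh(G/B)$ in \cite{CPZ}). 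Your appeal to the freeness of $\DcFd$ over $\sS$ is the right ingredient; only the cited axiom is off.
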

In particular, it will hold for $K$-theory, since the characteristic map is always surjective for $K$-theory. It will also hold for any cohomology theory if $\tor$ is invertible in $R$, as \cite[Cor.~13.9]{CPZ} shows that the non-equivariant characteristic map is then surjective. 

As mentioned in the introduction, this presentation was obtained in \cite{KiKr13} for algebraic cobordism, with the torsion index inverted, and by using comparisons with complex cobordism. 

\section{Subgroups of $T$}

Let $H$ be a subgroup of $T$ given by the embedding $h:H \hook T$. For example $H$ could be the trivial group, a finite multiplicative group or a subtorus of $T$. For any $X \in \TSch$, and thus in $\GSch{H}$ by restriction, there is a restriction ring map $\res{h}:\hh_T(X) \to \hh_H(X)$, in particular if $X=\pt$, which induces a canonical morphism $\hh_H(\pt) \otimes_{\hh_T(\pt)} \hh_T(X) \to \hh_H(X)$ of rings over $\hh_H(\pt)$, sending $a \otimes b$ to $a \cdot \res{h}(b)$. This ``change of coefficients'' morphism is compatible with pull-backs and push-forwards.

\begin{lem}
The morphism $\hh_H(\pt) \otimes_{\hh_T(\pt)} \hh_T(X) \to \hh_H(X)$ is an isomorphism when $X=G/P_{\Xi}$ or $X=W/W_{\Xi}$.
\end{lem}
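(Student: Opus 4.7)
The case $X = W/W_\Xi$ is immediate: this variety is a finite disjoint union of $T$-fixed points (which are therefore also $H$-fixed), so by additivity $\hh_T(W/W_\Xi) \simeq \hh_T(\pt)^{\#W/W_\Xi}$ and $\hh_H(W/W_\Xi) \simeq \hh_H(\pt)^{\#W/W_\Xi}$, and the change of coefficients map is clearly an isomorphism factor by factor.

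For $X = G/P_\Xi$, I would induct on the Bruhat filtration. Set $X_d = \bigcup_{w \in W^\Xi,\ \ell(w) \le d} \Schub{\Xi}{\bar{w}}$, so that $X_d \setminus X_{d-1} = \bigsqcup_{\ell(w) = d,\ w \in W^\Xi} BwP_\Xi/P_\Xi$ is a finite disjoint union of affine cells $\AA^d$ on which $T$ acts linearly through roots (via the identification with the appropriate unipotent group, with $T$ acting by conjugation). By Axiom \ref{hominv_ax}, $\hh_G(X_d \setminus X_{d-1})$ identifies with a finite direct sum of copies of $\hh_G(\pt)$ for both $G = T$ and $G = H$, compatibly with change of coefficients, so the change of coefficients morphism is trivially an isomorphism on each stratum.

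The core of the argument is to show that the localization sequence (Axiom \ref{loc_ax}) is in fact a \emph{split} short exact sequence, for $G = T$ and $G = H$ simultaneously:
\[
0 \to \hh_G(X_{d-1}) \to \hh_G(X_d) \to \hh_G(X_d \setminus X_{d-1}) \to 0.
\]
To produce the splitting, for each $w \in W^\Xi$ of length $d$ I fix a reduced decomposition $I_w$; then $q_{I_w}^\Xi$ factors through $\Schub{\Xi}{\bar{w}} \subseteq X_d$, so $\BSP{\Xi}{I_w}$ lifts canonically to a class in $\hh_G(X_d)$. Since $q_{I_w}^\Xi$ is an isomorphism over the smooth open cell $BwP_\Xi/P_\Xi$, and the other length-$d$ cells are disjoint from $\Schub{\Xi}{\bar{w}}$ by standard Bruhat-order facts, transverse base change along the open immersion $X_d \setminus X_{d-1} \hookrightarrow X_d$ identifies the image of $\BSP{\Xi}{I_w}$ in $\hh_G(X_d \setminus X_{d-1})$ with the ``$w$-th unit vector''. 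These classes therefore assemble into a section, giving splitness and left-exactness at one stroke.

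Granting splitness, I would conclude by the five lemma applied to the commutative diagram of short exact sequences comparing the tensored $T$-sequence to the $H$-sequence. Splitness of the $T$-sequence ensures that $\hh_H(\pt) \otimes_{\hh_T(\pt)} (-)$ preserves it as a short exact sequence, and by induction together with the cell computation the outer vertical maps are isomorphisms, so the middle one is too. Starting from the trivial base case $X_{-1} = \emptyset$, finitely many steps reach $X_N = G/P_\Xi$. The main obstacle in the argument is the cell-by-cell splitness; once one notices that Bott-Samelson classes supply explicit sections, the rest is a standard d\'evissage that sidesteps any need to redo the algebraic identification of Theorem \ref{identificationP_theo} for $H$-equivariant cohomology (which in general is not even Chern-complete over a point).
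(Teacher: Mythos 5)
Your strategy diverges from the paper's, and it has two genuine gaps.

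First, the filtration you use is illegitimate in this framework. The closed subsets $X_d=\bigcup_{\ell(w)\le d}\Schub{\Xi}{\bar w}$ are in general \emph{singular}, whereas $\hh_G$ is only defined on $\GSch{G}$, the category of \emph{smooth} quasi-projective $G$-varieties, and the localization axiom (A\ref{loc_ax}) requires both $Z$ and $X$ to be smooth. So $\hh_G(X_d)$ and the sequence $\hh_G(X_{d-1})\to\hh_G(X_d)\to\hh_G(X_d\setminus X_{d-1})\to 0$ are not available. To carry out a d\'evissage one must instead use the decreasing filtration by the open (hence smooth) complements $U_d=G/P_\Xi\setminus X_{d-1}$, with $Z_d:=X_d\setminus X_{d-1}$ closed and smooth in $U_d$ and $U_{d+1}$ its open complement. (This is essentially what the proof of \cite[Thm.~8.8]{CPZ}, invoked in Lemma~\ref{gendesing_lem}, does.)

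Second, and more seriously, even after this fix the argument does not yield left-exactness. From the localization axiom one only gets $\hh_G(Z_d)\tooby{(i_d)_*}\hh_G(U_d)\tooby{u_d^*}\hh_G(U_{d+1})\to 0$, and a section of $u_d^*$ (which your Bott-Samelson lifts do supply in the corrected setup) only gives a direct sum decomposition $\hh_G(U_d)\cong\ker(u_d^*)\oplus\operatorname{im}(s)=\operatorname{im}((i_d)_*)\oplus\operatorname{im}(s)$. It does \emph{not} give injectivity of $(i_d)_*$, so ``splitness and left-exactness at one stroke'' is not correct. Injectivity of $(i_d)_*$ is precisely the linear independence of the Bott-Samelson classes over $\hh_G(\pt)$, which is the substantive content of the statement; your d\'evissage merely reduces it to itself at each step. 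Without left-exactness on the $\hh_H$-side, the five lemma cannot be applied, and the argument as it stands only establishes surjectivity of the change-of-coefficients map, not injectivity.

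The paper's proof avoids both problems: surjectivity follows from the fact that Bott-Samelson classes generate $\hh_H(G/P_\Xi)$ (the proof of Lemma~\ref{gendesing_lem} runs for $H$ as well as for $T$, and that proof uses only right-exactness of localization on smooth opens). For injectivity it uses that the push-forward pairing $\pairing{-}{-}{\Xi}$ is non-degenerate for $T$ (Theorem~\ref{pairing_theo}) and that it commutes with the restriction $\res{h}$; the dual family to the Bott-Samelson basis in $\hh_T$ therefore restricts to a dual family in $\hh_H$, which forces independence over $\hh_H(\pt)$. This pairing step is exactly the input that replaces the left-exactness you would otherwise need, and it is not recoverable purely from the localization sequence.
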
 
\begin{proof}
The case of $X=W/W_{\Xi}$ is obvious, since as as scheme, it is simply a disjoint union of copies of $\pt$. If $X=G/P_{\Xi}$, the left-hand side is free, with a basis of Bott-Samelson classes. So is the right-hand side: it is still generated as an $\hh_H(\pt)$-module by the corresponding Bott-Samelson classes because the proof of Lemma \ref{gendesing_lem} works for $H$ as well as for $T$. Thus, the change of coefficients is surjective. The push-forward pairing is perfect and commutes to the restriction map from $T$ to $H$, so these classes stay independent in $\hh_H(G/P_{\Xi})$ (they have a dual family). Thus, the change of coefficients is injective. 
\end{proof}

This shows that Diagram \eqref{pullcube_eq} for $H$ is obtained by change of coefficients, as well as Diagram \eqref{pushcube_eq} and Diagram \eqref{pushcube2_eq} except their rightmost columns involving $\qQ$. Theorem \ref{pairing_theo} on the bilinear pairing stays valid for $H$ instead of $T$.

\end{document}